\newtheorem{theorem}{Theorem}[section]
\newtheorem{lemma}[theorem]{Lemma}
\newtheorem{corollary}[theorem]{Corollary}
\newtheorem{remark}{Remark}[section]
\newtheorem{example}[theorem]{Example}
\newcommand{\E}{{\mathbb E}}
\newcommand{\R}{{\mathbb R}}
\renewcommand{\P}{{\mathbb P}}
\newcommand{\Hs}{{\mathcal{H}}}
\newcommand{\I}{{\mathcal{I}}}
\newcommand{\K}{{\mathcal{K}}}
\newcommand{\Ps}{{\mathcal{P}}}
\newcommand{\RNum}[1]{\uppercase\expandafter{\romannumeral #1\relax}}
\newcommand{\argmin}{\mathop{\mathrm{arg\,min}{}}}
\newcounter{rcnt}[section]
\def\qt#1{\qquad\text{#1}}
\def\argmin{\mathop{\rm argmin}}
\definecolor{DSgray}{cmyk}{0,1,0,0}
\begin{document}

\title{Adaptive Estimation of Planar Convex Sets} 
\author{Tony Cai$^1$,  Adityanand Guntuboyina$^2$, and  Yuting Wei$^2$}

\footnotetext[1]{Department of Statistics, The Wharton School, University of Pennsylvania. The research of Tony Cai 
was supported in part by NSF Grants DMS-1208982 and DMS-1403708, and NIH Grant R01 CA127334.}
\footnotetext[2]{Department of Statistics, University of California at Berkeley. The research of Adityanand Guntuboyina was supported by NSF Grant DMS-1309356.}

\maketitle

\begin{abstract} 
In this paper, we consider adaptive estimation of  an unknown planar compact, convex set from noisy measurements of its support function on a uniform grid. Both the problem of estimating the support function at a point and that of estimating the convex set are studied. Data-driven adaptive estimators are proposed and their optimality properties are established. For pointwise estimation, it is shown that the estimator optimally adapts to every compact, convex set instead of a collection of large parameter spaces as in the conventional minimax theory in nonparametric estimation literature. For set estimation, the estimators adaptively achieve the optimal rate of convergence. In both these problems, our analysis makes no smoothness assumptions on the unknown sets. 
\end{abstract}

\noindent{\bf Keywords:} Adaptive estimation, circle convexity, convex set, minimax rate of convergence, support function.

\noindent{\bf AMS 2000 Subject Classification:} Primary: 62G08; Secondary: 52A20.


\section{Introduction}

We study in this paper the problem of nonparametric estimation of an unknown planar compact, convex set from noisy
measurements of its support function. Before describing the details of the problem, let us first introduce the support function. For a compact, convex set $K$ in $\R^2$, its support function is defined by  
\begin{equation*}
  h_K(\theta) := \max_{(x_1, x_2) \in K} \left(x_1 \cos \theta + x_2  \sin \theta \right) \qt{for $\theta \in \R$}. 
\end{equation*}
Note that $h_K$ is a periodic function with period $2\pi$. It is
useful to think about $\theta$ in terms of the direction $(\cos
\theta, \sin \theta)$. The line $x_1 \cos \theta + x_2 \sin \theta =
h_K(\theta)$ is a support line for $K$ (i.e., it touches $K$ and $K$
lies on one side of it). Conversely, every support line of $K$ is of
this form for some $\theta$. The convex set $K$ is completely determined by
the its support function $h_K$ because $K = \bigcap_{\theta} \{(x_1, x_2): x_1 \cos
\theta + x_2 \sin \theta \leq h_K(\theta)\}$. 

The support function $h_K$ possesses  the {\it circle-convexity} property  (see, e.g.,~\cite{VitaleCC}): 
for every $\alpha_1 > \alpha > \alpha_2$ and $0 < \alpha_1 - \alpha_2 < \pi$, 
\begin{equation}\label{genvit}
  \frac{h_K(\alpha_1)}{\sin(\alpha_1 - \alpha)} + \frac{h_K(\alpha_2)}{\sin (\alpha - \alpha_2)} \geq
  \frac{\sin(\alpha_1 - \alpha_2)}{\sin(\alpha_1 - \alpha) \sin(\alpha - \alpha_2)} h_K(\alpha). 
\end{equation}
Moreover the above inequality characterizes $h_K$, i.e., any periodic function of period $2\pi$ satisfying the above inequality equals  $h_K$ for a unique compact, convex subset $K$ in $\R^2$. The circle-convexity property \eqref{genvit} is clearly related to the usual convexity property. Indeed, if we replace the sine function in~\eqref{genvit}
by the identity function (i.e., if we replace $\sin \alpha$ by $\alpha$ in~\eqref{genvit}), we obtain the condition for convexity. In spite of this similarity,~\eqref{genvit} is different from convexity as can be seen from the example of the function $h(\theta) = |\sin \theta|$ which satisfies~\eqref{genvit} but is clearly not convex.

\subsection{The Problem, Motivations, and Background}

We are now ready to describe the problem studied in this paper. Let
$K^*$ be an unknown compact, convex set in $\R^2$. We study the 
problem of estimating $K^*$ or $h_{K^*}$ from noisy measurements of
$h_{K^*}$. Specifically, we observe data $(\theta_1, Y_1), \dots,
(\theta_n, Y_n)$ drawn according to the model
\begin{equation}
  \label{regmod}
Y_i = h_{K^*}(\theta_i) + \xi_i \qt{for $i = 1, \dots, n$}
\end{equation}
where $\theta_1, \dots, \theta_n$ are fixed grid points in $(-\pi,
\pi]$ and $\xi_1, \dots, \xi_n$ are i.i.d Gaussian random variables
with mean zero and known variance $\sigma^2$. We focus on the dual
problems of estimating the scalar quantity $h_{K^*}(\theta_i)$ for
each $1 \leq i \leq n$ as well as the convex set $K^*$. We propose data-driven adaptive estimators and establish their optimality for both of these problems. 

The problem considered here has a range of applications in engineering. The regression model \eqref{regmod} was first proposed and studied by \citet{PrinceWillskyIEEE} who were motivated by an application to Computed Tomography. \citet{LeleKulkarniWillsky} showed how solutions to this problem can be applied to target reconstruction from resolved laser-radar measurements in the presence
of registration errors. \citet{GregorRannou} considered application to Projection Magnetic Resonance Imaging. It is also a fundamental problem in the field of geometric tomography;  see~\citet{GardnerBook}. Another application domain where this problem might plausibly arise is robotic tactical sensing as has been suggested by \citet{PrinceWillskyIEEE}.  Finally this is a very natural shape constrained estimation problem and would fit right into the recent literature on shape constrained estimation. See, for example, \citet{groeneboom2014nonparametric}.   

Most proposed procedures for estimating $K^*$ in this setting are
based on least squares minimization. The least squares estimator
$\hat{K}_{\text{ls}}$ is defined as any minimizer of $\sum_{i=1}^n (Y_i -
h_{K}(\theta_i))^2$ as $K$ ranges over all compact convex sets. The
minimizer in this optimization problem is not unique and one can
always take it to be a polytope. This estimator was first proposed
by~\cite{PrinceWillskyIEEE} who also proposed an algorithm for
computing it based on quadratic programming. Further algorithms for
computing $\hat{K}_{\text{ls}}$ were proposed in~\citet{PrinceWillskyIEEE,
  LeleKulkarniWillsky,  GardnerKiderlen2009}. 

The theoretical performance of the least squares estimator was first considered by \citet{GKM06} who mainly studied its accuracy for estimating $K^*$ under the natural fixed design loss:   
\begin{equation}\label{glos}
  L_f(K^*, \hat{K}_{\text{ls}})  := \frac{1}{n} \sum_{i=1}^n
  \left(h_{K^*}(\theta_i) - h_{\hat{K}_{\text{ls}}}(\theta_i) \right)^2.   
\end{equation}
The key result of \citet{GKM06} (specialized to the planar case that we are studying) states that $L_f(K^*, \hat{K}_{\text{ls}}) = O(n^{-4/5})$ as $n \rightarrow \infty$ almost surely provided $K^*$ is contained in a ball of bounded radius. This result is complemented by the minimax lower bound in \citet{G11} where it was shown that $n^{-4/5}$ is the minimax rate for this problem. These two results together imply minimax optimality of $\hat{K}_{\text{ls}}$ under the loss function $L_f$. No other theoretical results for this problem are available outside of those in \citet{GKM06} and \citet{G11}. 

As a result, the following basic questions are still unanswered:  
\begin{enumerate}
\item For a fixed $i \in \{1, \dots, n\}$, how does one optimally and adaptively estimate $h_{K^*}(\theta_i)$? This is the pointwise estimation problem. In the literature on shape constrained estimation, pointwise estimation has been the most studied problem. Several  papers have been written on this for monotonicity constrained estimation; prominent examples being \cite{Brunk70, Wright81, G83, G85, CD99, Cator2011, H14} and convexity constrained estimation; prominent ones being \cite{HanPled76, Mammen91,  GroeneboomJongbloedWellner2001a, GroeneboomJongbloedWellner2001b, CaiLowFwork}. For the problem considered in this paper however, nothing is  known about pointwise estimation. It may be noted that the result $L_f(K^*, \hat{K}_{\text{ls}}) = O(n^{-4/5})$ of \citet{GKM06} does not say anything about the accuracy of $h_{\hat{K}_{\text{ls}}}(\theta_i)$ as an estimator for $h_{K^*}(\theta_i)$. 
  
\item How to construct minimax optimal estimators for the set $K^*$ that also adapt to polytopes? Polytopes with a small number of extreme points have a much simpler structure than general convex sets. In the problem of estimating convex sets under more standard  observation models different from the one studied here, it is possible to construct estimators that converge at faster rates for polytopes compared to the overall minimax rate (see \citet{brunel2014non} for a nice summary of this theory). Similar kinds of adaptation has been recently studied for shape constrained estimation problems based on monotonicity and convexity, see \cite{GSvex, GuntuAnnIso, baraud2015rates}. Based on these results, it is natural to expect minimax estimators that adapt to polytopes in this problem. This has not been addressed previously. 
\end{enumerate}

\subsection{Our Contributions}

We answer both the above questions in the affirmative in the present paper. The main contributions of this paper can be summarized in the following: 

\begin{enumerate}
\item We study the pointwise adaptive estimation problem in detail in
  the decision theoretic framework where the focus is on the
  performance at every function, instead of the maximum risk over a
  large parameter space. This framework, first introduced in
  \cite{cai2013adaptive} and \cite{CaiLowFwork} for shape constrained
  regression, provides a much more precise characterization of the
  performance of an estimator than the conventional minimax theory
  does.  

In the context of the present problem, the difficulty of estimating
$h_{K^*}(\theta_i)$ at a given $K^*$ and $\theta_i$ can be expressed
by means of a benchmark $R_n(K^*, \theta)$ which is defined as follows
(below $\E_{L}$ denotes expectation taken with respect to the joint
distribution of $Y_1, \dots, Y_n$ generated according to the model
\eqref{regmod} with  $K^*$ replaced by $L$): 
\begin{equation}\label{R_n}
R_n(K^*, \theta) = \sup_{L} \inf_{\tilde{h}} \max \left( \E_{K^*}
  (\tilde{h} - h_{K^*}(\theta))^2,\;  \E_{L} (\tilde{h} -
  h_{L}(\theta))^2 \right), 
\end{equation}
where the supremum above is taken over all compact, convex sets $L$
while the infimum is over all estimators $\tilde{h}$. In our first
result for pointwise estimation, we establish, for each $i \in \{1,
\dots, n\}$, a lower bound for the performance of every estimator for
estimating $h_{K^*}(\theta_i)$. Specifically,  it is shown that  
  \begin{equation}\label{co.2}
 R_n(K^*, \theta_i)  \geq  c \cdot \frac{ \sigma^2}{k_*(i)+1}
  \end{equation}
where $k_*(i)$ is an integer for which an explicit formula can be
given in terms of $K^*$ and $i$; and $c$ is a universal positive
constant. It will turn out that $k_*(i)$ is related to the smoothness
of $h_{K^*}(\theta)$ at $\theta = \theta_i$.   

We construct a data-driven estimator, $\hat{h}_i$, of
$h_{K^*}(\theta_i)$ based on local smoothing together with an
optimization scheme for automatically choosing a bandwidth, and show
that the estimator $\hat{h}_i$ satisfies 
\begin{equation}\label{co.1}
    \E_{K^*} \left(\hat{h}_i - h_{K^*}(\theta_i) \right)^2 \leq C \cdot \frac{ \sigma^2}{k_*(i) + 1} 
  \end{equation}
for a universal positive constant $C$. 
Inequalities \eqref{co.2} and \eqref{co.1} together imply  that
$\hat{h}_i$ is, within a universal constant factor,  an optimal
estimator of  $h_{K^*}(\theta_i)$ for every compact, convex set
$K^*$. This optimality is stronger than the traditional minimax
optimality usually employed in nonparametric function estimation. The
quantity $\sigma^2/(k_*(i) + 1)$ depends on the unknown set $K^*$ in a
similar way that the Fisher information bound depends on the unknown
parameter in a regular parametric model. In contrast, the optimal rate
in the minimax paradigm is given in terms of the worse case
performance over a large parameter space and does not depend on
individual parameter values.  
  
\item Using the optimal adaptive point estimators $\hat{h}_1, \dots,
  \hat{h}_n$, we construct two set estimators $\hat{K}$ and
  $\hat{K}'$. The details of this construction are given in Section
  \ref{setes}. In Theorems \ref{nt} and \ref{nti}, we prove that
  $\hat{K}$ is minimax optimal for $K^*$ under the loss function $L_f$
  while the estimator $\hat{K}'$ is minimax optimal under the integral
  squared loss function defined by   
\begin{equation}\label{id}
  L(\hat{K}', K^*) := \int_{-\pi}^{\pi} \left(h_{\hat{K}'}(\theta) -
    h_{K^*}(\theta) \right)^2 d\theta. 
\end{equation}
Specifically,  Theorem \ref{nt} shows that
\begin{equation}\label{co.3}
  \E_{K^*} L_f(K^*, \hat{K}) \leq C \left\{\frac{\sigma^2}{n} +
    \left(\frac{\sigma^2 \sqrt{R}}{n} \right)^{4/5}\right\}
\end{equation}
provided $K^*$ is contained in a ball of radius $R$. This, combined
with the minimax lower bound in \citet{G11}, proves the minimax
optimality of $\hat{K}$. An analogous result is shown in
Theorem~\ref{nti} for $\E_{K^*} L(K^*, \hat{K}')$. For the pointwise
estimation problem where the goal is to estimate $h_{K^*}(\theta_i)$,
the optimal rate $\sigma^2/(k_*(i) + 1)$ can be as large as
$n^{-2/3}$. However the bound \eqref{co.3} shows that the globally the
risk is at most $n^{-4/5}$. The shape constraint given by convexity of
$K^*$ ensures that the points where pointwise estimation rate is
$n^{-2/3}$ cannot be too many. Note that we make no smoothness
assumptions for proving \eqref{co.3}.  

\item We show that our set estimators $\hat{K}$ and $\hat{K}'$ adapt
  to polytopes with bounded number of extreme points. Already
  inequality \eqref{co.3} implies that $\E_{K^*} L_f(K^*, \hat{K})$ is
  bounded from above by the parametric risk $C \sigma^2/n$ provided $R
  = 0$ (note that $R = 0$ means that $K^*$ is a singleton). Because
  $\sigma^2/n$ is much smaller than $n^{-4/5}$, the bound \eqref{co.3}
  shows that $\hat{K}$ adapts to singletons.  Theorem
  \ref{theorem::General} extends this adaptation phenomenon to
  polytopes and we show that $\E_{K^*} L_f(K^*, \hat{K})$ is bounded
  by the parametric rate (up to a logarithmic multiplicative factor of
  $n$) for all polytopes with bounded number of extreme points. An
  analogous result is also proved for $\E_{K^*} L(K^*, \hat{K}')$ in
  Theorem \ref{nti}. It should be noted that the construction of our
  estimators $\hat{K}$ and $\hat{K}'$ (described in Section
  \ref{setes}) does not involve any special treatment for polytopes;
  yet the estimators automatically achieve faster rates for polytopes.  

\end{enumerate}

We would like to stress two features of this paper: (a) we do not make any smoothness assumptions on the boundary of $K^*$ throughout the paper; in particular, note that we obtain the $n^{-4/5}$ rate for the set estimators $\hat{K}$ and $\hat{K}'$ without any smoothness assumptions, and (b) we go beyond the traditional minimax paradigm by considering adaptive estimation in both the pointwise estimation problem and the problem of estimating the entire set $K^*$. 

\subsection{Organization of the Paper}

The rest of the paper is structured as follows. The proposed estimators are described in detail in Section~\ref{est}. The theoretical properties of the estimators are analyzed in Section \ref{locb}; Section \ref{cors} gives results for pointwise estimation while Section \ref{setac} deals with set estimators. In Section \ref{egs}, we investigate optimal estimation of some special compact convex sets $K^*$ where we explicitly compute the associated rates of convergence. The proofs of the main results are given in Section \ref{pmr} and additional technical results are relegated to Appendix \ref{apap}.


\section{Estimation Procedures}
\label{est}

Recall the regression model \eqref{regmod}, where we observe noisy
measurements $(\theta_1, Y_1), \dots,(\theta_n, Y_n)$ with 
\mbox{$\theta_i =
2\pi i/n-\pi, ~i=1,...,n$} being fixed grid points in $(-\pi,
\pi]$. In this section, we first describe in detail our estimate
$\hat{h}_i$ for $h_{K^*}(\theta_i)$ for each $i$. Subsequently,
we shall describe how to put together these estimates $\hat{h}_1,
\dots, \hat{h}_n$ to yield set estimators for $K^*$.  

\subsection{Estimators for $h_{K^*}(\theta_i)$ for each fixed $i$}

Fix $1 \leq i \leq n$. Our construction of the estimator $\hat{h}_i$
for $h_{K^*}(\theta_i)$ is based on the key circle-convexity property
\eqref{genvit} of the function $h_{K^*}(\cdot)$. Let us define, for $0
< \phi < \pi/2$ and $\theta \in (-\pi, \pi]$, the following two
quantities:    
\begin{equation*}
l(\theta, \phi) := \cos \phi \left(h_{K^*}(\theta+\phi) + h_{K^*}(\theta-\phi) \right)   -
\frac{h_{K^*}(\theta+2\phi) + h_{K^*}(\theta-2\phi)}{2}
\end{equation*}
and 
\begin{equation*}
u(\theta,\phi) := \frac{h_{K^*}(\theta+\phi) + h_{K^*}(\theta-\phi)}{2\cos \phi}.  
\end{equation*}
The following lemma states that for every $\theta$, the quantity
$h_{K^*}(\theta)$ is sandwiched between $l(\theta, \phi)$ and
$u(\theta, \phi)$ for every~$\phi$. This will be used crucially in
defining $\hat{h}$. The proof of this lemma is a straightforward
consequence of \eqref{genvit} and is given in Appendix \ref{apap}.    

\begin{lemma}\label{genvitthm}
For every $0 < \phi < \pi/2$ and every $\theta \in(-\pi,\pi]$, we have
$l(\theta,\phi) \leq h_{K^*}(\theta) \leq u(\theta,\phi)$.
\end{lemma}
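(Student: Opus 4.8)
The plan is to get the upper bound directly from the circle-convexity inequality~\eqref{genvit} applied with a symmetric triple of angles, and then to derive the lower bound by applying the upper bound twice, at the shifted base points $\theta+\phi$ and $\theta-\phi$.

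First, for the upper bound, I would invoke~\eqref{genvit} with $\alpha_1 = \theta+\phi$, $\alpha = \theta$, $\alpha_2 = \theta-\phi$. The hypotheses $\alpha_1 > \alpha > \alpha_2$ and $0 < \alpha_1-\alpha_2 = 2\phi < \pi$ hold precisely because $0 < \phi < \pi/2$. Since $\alpha_1-\alpha = \alpha-\alpha_2 = \phi$ and $\alpha_1-\alpha_2 = 2\phi$, \eqref{genvit} becomes
\[
\frac{h_{K^*}(\theta+\phi) + h_{K^*}(\theta-\phi)}{\sin \phi} \;\geq\; \frac{\sin 2\phi}{\sin^2 \phi}\, h_{K^*}(\theta).
\]
Using $\sin 2\phi = 2\sin\phi\cos\phi$ and multiplying through by $\sin\phi > 0$ gives $h_{K^*}(\theta+\phi) + h_{K^*}(\theta-\phi) \geq 2\cos\phi\, h_{K^*}(\theta)$, and since $\cos\phi > 0$ this is exactly $h_{K^*}(\theta) \leq u(\theta,\phi)$.

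Next, for the lower bound, I would apply the upper bound just obtained at the two points $\theta+\phi$ and $\theta-\phi$ (keeping the same $\phi$), yielding
\[
h_{K^*}(\theta+\phi) \leq \frac{h_{K^*}(\theta+2\phi) + h_{K^*}(\theta)}{2\cos\phi}, \qquad h_{K^*}(\theta-\phi) \leq \frac{h_{K^*}(\theta) + h_{K^*}(\theta-2\phi)}{2\cos\phi}.
\]
Adding these, multiplying by $\cos\phi > 0$, and rearranging to isolate $h_{K^*}(\theta)$ gives
\[
\cos\phi\bigl(h_{K^*}(\theta+\phi) + h_{K^*}(\theta-\phi)\bigr) - \frac{h_{K^*}(\theta+2\phi) + h_{K^*}(\theta-2\phi)}{2} \leq h_{K^*}(\theta),
\]
which is $l(\theta,\phi) \leq h_{K^*}(\theta)$. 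Since $h_{K^*}$ is $2\pi$-periodic on $\R$ and \eqref{genvit} holds for every real triple satisfying its hypotheses, there is no difficulty when $\theta\pm\phi$ or $\theta\pm 2\phi$ falls outside $(-\pi,\pi]$.

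I do not expect any real obstacle here: the only points requiring care are the trigonometric simplification $\sin 2\phi/\sin^2\phi = 2\cos\phi/\sin\phi$ and the sign facts $\sin\phi, \cos\phi > 0$ on $(0,\pi/2)$, which permit multiplying and dividing the inequalities without reversing them. The one mildly non-obvious move is that the lower bound is not itself an instance of~\eqref{genvit}, but rather a consequence of combining two copies of the upper bound at the shifted base points.
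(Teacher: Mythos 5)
Your proof is correct and matches the paper's argument essentially exactly: the upper bound uses the same symmetric triple in~\eqref{genvit}, and the two auxiliary inequalities you obtain by applying the upper bound at $\theta+\phi$ and $\theta-\phi$ are precisely the two instances of~\eqref{genvit} (with $\alpha_1=\theta\pm 2\phi$, $\alpha=\theta\pm\phi$, $\alpha_2=\theta$) that the paper averages, just written in rearranged form. The reframing of the lower bound as ``the upper bound applied twice at shifted base points'' is a nice way to see why it holds, but it is the same computation.
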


For a fixed $1 \leq i \leq n$, Lemma~\ref{genvitthm} implies that
$l(\theta_i,\frac{2\pi j}{n}) \leq h_{K^*}(\theta_i) \leq
u(\theta_i,\frac{2\pi j}{n})$ for every \mbox{$0 \leq j < \lfloor n/4
\rfloor$}. Note that when $j=0$, we have $l(\theta_i,0) = 
h_{K^*}(\theta_i) = u(\theta_i,0).$ Averaging these inequalities for
$j = 0, 1, \dots, k$ where $k$ is a fixed integer 
with $0 \leq k < \lfloor n/4 \rfloor$, we obtain
\begin{equation}
  \label{kiest}
L_k(\theta_i) \leq h_{K^*}(\theta_i) \leq U_k(\theta_i)  \qt{for every
$0 \leq k < \lfloor n/4 \rfloor$}
\end{equation}
where
\begin{equation*}
L_k(\theta_i) := \frac{1}{k+1} \sum_{j = 0}^k l \left(\theta_i, \frac{2
      \pi j}{n} \right)  ~~ \text{ and } ~~ U_k(\theta_i) := \frac{1}{k+1} \sum_{j =
    0}^k u \left(\theta_i, \frac{2 \pi j}{n} \right). 
\end{equation*}

We are now ready to describe our estimator. Fix $1
\leq i \leq n$. Inequality~\eqref{kiest} says that the quantity of
interest, $h_{K^*}(\theta_i)$, is sandwiched between $L_k(\theta_i)$ and 
$U_k(\theta_i)$ for every $k$. Both $L_k(\theta_i)$ and
$U_k(\theta_i)$ can naturally be estimated by unbiased
estimators. Indeed, let 
\begin{equation*}
  \hat{l}(\theta_i, 2j\pi/n) := \cos (2j \pi/n) (Y_{i+j} + Y_{i-j}) - \frac{Y_{i+2j} +
    Y_{i-2j}}{2} ~~\text{   and   } ~~ \hat{u}(\theta_i, 2j\pi/n) := \frac{Y_{i+j} +
    Y_{i-j}}{2 \cos (2j \pi/n)}
\end{equation*}
and take
\begin{equation}\label{avig}
 \hat{L}_k(\theta_i) := \frac{1}{k+1}
  \sum_{j = 0}^k \hat{l} \left(\theta_i, 2j\pi/n \right) ~~ \text{ and }
  ~~ \hat{U}_k(\theta_i) := \frac{1}{k+1} \sum_{j = 0}^k \hat{u}
  \left(\theta_i, 2j\pi/n \right). 
\end{equation}
Obviously, in order for the above to be meaningful, we need to define
$Y_i$ even for $i \notin \{1, \dots, n\}$. This is easily done in the
following way: for any $i \in \mathbb{Z}$, let $s$ be such that $i -
sn \in \{1, \dots, n\}$ and take $Y_i := Y_{i-sn}$. 

As $k$ increases, one averages more terms in \eqref{avig} and hence
the estimators $\hat{L}_k(\theta_i)$ and $\hat{U}_k(\theta_i)$ become
more accurate. Let 
\begin{equation}\label{deluserep}
  \hat{\Delta}_k(\theta_i) := \hat{U}_k(\theta_i) -
  \hat{L}_k(\theta_i) = \frac{1}{k+1} \sum_{j=0}^k \left(
    \frac{Y_{i+2j} + Y_{i-2j}}{2} - \frac{\cos (4 j \pi/n)}{\cos (2j
      \pi/n)} \frac{Y_{i+j} + Y_{i-j}}{2} \right). 
\end{equation}
Because of \eqref{kiest}, a natural strategy for estimating 
$h_{K^*}(\theta_i)$ is to choose $k$ for which
$\hat{\Delta}_k(\theta_i)$ is the smallest and then use either
$\hat{L}_{k}(\theta_i)$ or $\hat{U}_{k}(\theta_i)$ at that $k$ as the
estimator. This is essentially our estimator with one small difference
in that we also take into account the noise present in
$\hat{\Delta}_k(\theta_i)$. Formally, our estimator for
$h_{K^*}(\theta_i)$ is given by:  
\begin{equation}\label{estimator}
 \hat{h}_i = \hat{U}_{\hat{k}(i)}(\theta_i),~\mbox{where}~\hat{k}(i)
 := \argmin_{k \in \I} \left\{ \left(\hat{\Delta}_k(\theta_i)
   \right)_+  + \frac{2\sigma }{\sqrt{k+1}} \right\} 
\end{equation}
and $\I := \{0\}\cup\{ 2^j : j \geq 0 \text{ and } 2^j \leq \lfloor
n/16\rfloor \}$.  

Our estimator $\hat{h}_i$ can be viewed as an angle-adjusted local
averaging estimator. It is inspired by the estimator
of~\citet{CaiLowFwork} for convex regression. The number of terms
averaged equals $\hat{k}(i) + 1$ and this is analogous to the bandwidth
in kernel-based smoothing methods. Our $\hat{k}(i)$ is determined from
an optimization scheme. Notice that unlike the least squares estimator
$h_{\hat{K}_{\text{ls}}}(\theta_i)$, the construction of $\hat{h}_i$
for a fixed $i$ does not depend on the construction of $\hat{h}_j$ for
$j\neq i$.

\subsection{Set Estimators for $K^*$} \label{setes}

We next present estimators for the set $K^*$. The point estimators 
$\hat{h}_1, \dots, \hat{h}_n$ do not directly give an estimator for
$K^*$ because $(\hat{h}_1, \dots, \hat{h}_n)$ is not necessarily a
valid support vector i.e., $(\hat{h}_1, \dots, \hat{h}_n)$ does not
always belong to the following set:
\begin{equation*}
  \Hs := \left\{(h_{K}(\theta_1), \dots, h_{K}(\theta_n)) : K
    \subseteq \R^2 \text{ is compact and convex} \right\}. 
\end{equation*}
To get a valid support vector from $(\hat{h}_1, \dots, \hat{h}_n)$, we
need to  project it onto $\Hs$ to obtain: 
\begin{equation}\label{vsp}
    \hat{h}^{P} := (\hat{h}^P_1, \dots, \hat{h}^P_n) := \argmin_{(h_1,
      \dots, h_n) \in \Hs} \sum_{i=1}^n \left(\hat{h}_i - h_i
    \right)^2  
  \end{equation}
The superscript $P$ here stands for projection. An estimator for the
set $K^*$ can now be constructed immediately from $\hat{h}^P_1, \dots,
\hat{h}_n^P$ via  
\begin{equation}\label{khdef}
  \hat{K} := \left\{(x_1, x_2) : x_1 \cos \theta_i +
    x_2 \sin \theta_i \leq \hat{h}_i^P \text{ for all } i = 1, \dots,
    n \right\}.  
\end{equation}
In Theorems \ref{nt} and \ref{theorem::General}, we prove upper
bounds on the accuracy of $\hat{K}$ under the loss function $L_f$
defined in \eqref{glos}. 

There is another reasonable way of constructing a set estimator for
$K^*$ based on the point estimators $\hat{h}_1, \dots,
\hat{h}_n$. We first interpolate $\hat{h}_1, \dots, \hat{h}_n$ to
define a function $\hat{h}' : (-\pi, \pi] \rightarrow \R$ as follows: 
\begin{equation}\label{hind}
  \hat{h}'(\theta) := \frac{\sin (\theta_{i+1} - \theta)}{\sin
    (\theta_{i+1} - \theta_i)} \hat{h}_i + \frac{\sin(\theta -
    \theta_i)}{\sin(\theta_{i+1} - \theta_i)} \hat{h}_{i+1} \qt{for
    $\theta_i \leq \theta \leq \theta_{i+1}$}. 
\end{equation}
Here $i$ ranges over $1, \dots, n$ with the convention that
$\theta_{n+1} = \theta_1 + 2\pi$ (and $\theta_n \leq \theta \leq
\theta_{n+1}$ should be identified with $-\pi \leq \theta \leq -\pi + 2
\pi/n$). Based on this function $\hat{h}'$, we can define our estimator
$\hat{K}'$ of $K^*$ by 
\begin{equation}\label{khpdef}
  \hat{K}' := \argmin_{K} \int_{-\pi}^{\pi} \left(\hat{h}'(\theta) -
    h_{K}(\theta) \right)^2 d\theta . 
\end{equation}
The existence and uniqueness of $\hat{K}'$ can be justified in the
usual way by the Hilbert space projection theorem. In Theorem 
\ref{nti}, we prove bounds on the accuracy of $\hat{K}'$ as an
estimator for $K^*$ under the integral loss $L$ defined in
\eqref{id}.


\section{Main Results}\label{locb}  

We investigate in this section the accuracy of the proposed point and set estimators. The proofs of these results are given in Section \ref{pmr}.

\subsection{Accuracy of the Point Estimator}\label{cors}

As mentioned in the introduction, we evaluate the performance of the
point estimator $\hat{h}_i$ at individual functions, not the worst
case over a large parameter space. This provides a much more precise
characterization of the accuracy of the estimator.   
 Let us first recall inequality \eqref{kiest} where $h_{K^*}(\theta_i)$ is sandwiched between $L_k(\theta_i)$ and  $U_k(\theta_i)$. Define $\Delta_k(\theta_i) := U_k(\theta_i) - L_k(\theta_i)$. 

\begin{theorem}\label{rbe}
Fix $i \in \{1, \dots, n\}$. There exists a universal positive constant $C$ such that the risk of $\hat{h}_i$ as an estimator of $h_{K^*}(\theta_i)$ satisfies the following inequality:  
  \begin{equation}\label{rbe.eq}
    \E_{K^*} \left(\hat{h}_i - h_{K^*}(\theta_i) \right)^2 \leq C \cdot \frac{ \sigma^2}{k_*(i)+1} 
  \end{equation}
where 
\begin{equation}\label{kst}
  k_*(i) := \argmin_{k \in \I} \left(\Delta_k(\theta_i) + \frac{2\sigma}{\sqrt{k+1}} \right).
\end{equation} 
\end{theorem}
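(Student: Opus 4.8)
The plan is to show that the data-driven index $\hat{k}(i)$ performs, up to constants, as well as the oracle index $k_*(i)$, and then to control the estimation error at $\hat{k}(i)$. Write $k_* = k_*(i)$ and $\hat{k} = \hat{k}(i)$ for brevity. The quantity $\hat{h}_i = \hat{U}_{\hat{k}}(\theta_i)$ satisfies, by Lemma~\ref{genvitthm} and \eqref{kiest}, $\hat{h}_i - h_{K^*}(\theta_i) = \big(\hat{U}_{\hat{k}}(\theta_i) - U_{\hat{k}}(\theta_i)\big) + \big(U_{\hat{k}}(\theta_i) - h_{K^*}(\theta_i)\big)$, where the first term is a mean-zero Gaussian with a variance I can compute explicitly from \eqref{avig}, and the second term lies in $[0, \Delta_{\hat{k}}(\theta_i)]$ because $L_{\hat{k}} \le h_{K^*}(\theta_i) \le U_{\hat{k}}$. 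So it suffices to bound $\E(\hat{U}_{\hat{k}}(\theta_i) - U_{\hat{k}}(\theta_i))^2$ and $\E\,\Delta_{\hat{k}}(\theta_i)^2$ separately. The first is a variance term: each $\hat{u}(\theta_i, 2j\pi/n)$ has variance of order $\sigma^2/\cos^2(2j\pi/n)$, and since $j$ ranges only over $0,\dots,\hat{k}$ with $\hat{k} \le \lfloor n/16\rfloor$, all the angles $2j\pi/n$ are bounded away from $\pi/2$, so $\cos^2(2j\pi/n)$ is bounded below by a universal constant; averaging $\hat{k}+1$ such terms gives $\E(\hat{U}_{\hat{k}}(\theta_i) - U_{\hat{k}}(\theta_i))^2 \lesssim \sigma^2/(\hat{k}+1)$, but because $\hat{k}$ is random and could be small, I will instead bound this in terms of the objective value at $\hat{k}$.

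The key step is the comparison of objectives. Define the population objective $\Psi(k) := \Delta_k(\theta_i) + 2\sigma/\sqrt{k+1}$ and the empirical one $\hat{\Psi}(k) := (\hat{\Delta}_k(\theta_i))_+ + 2\sigma/\sqrt{k+1}$; by definition $\hat{k}$ minimizes $\hat{\Psi}$ over $\I$ and $k_*$ minimizes $\Psi$ over $\I$. The engine of the proof is a uniform (over $k \in \I$) deviation bound showing that with high probability, simultaneously for all $k \in \I$, $|\hat{\Delta}_k(\theta_i) - \Delta_k(\theta_i)| \lesssim \sigma/\sqrt{k+1}$ up to a logarithmic factor in $|\I| = O(\log n)$. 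This follows because $\hat{\Delta}_k(\theta_i) - \Delta_k(\theta_i)$ is, from \eqref{deluserep}, an average of $k+1$ bounded-coefficient Gaussians (the coefficients $\cos(4j\pi/n)/\cos(2j\pi/n)$ are uniformly bounded on the relevant range of $j$), hence Gaussian with standard deviation $\asymp \sigma/\sqrt{k+1}$; a union bound over the $O(\log n)$ elements of $\I$ costs only a $\sqrt{\log\log n}$-type factor that I will absorb. On this good event, $\hat{\Psi}(\hat{k}) \le \hat{\Psi}(k_*) \lesssim \Psi(k_*)$ and also $\Psi(\hat{k}) \lesssim \hat{\Psi}(\hat{k})$, so $\Psi(\hat{k}) \lesssim \Psi(k_*)$; in particular both $\Delta_{\hat{k}}(\theta_i) \lesssim \Psi(k_*)$ and $\sigma/\sqrt{\hat{k}+1} \lesssim \Psi(k_*)$. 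Since $\Psi(k_*) = \Delta_{k_*}(\theta_i) + 2\sigma/\sqrt{k_*+1} \le 2\Delta_{k_*}(\theta_i) + \ldots$, and one checks $\Delta_{k_*}(\theta_i) \le 2\sigma/\sqrt{k_*+1}$ from the definition of $k_*$ (the minimizer balances the two terms, or at worst $\Psi(k_*)\le\Psi(0)$ forces it), we get $\Psi(k_*) \asymp \sigma/\sqrt{k_*+1}$. Feeding this back: on the good event, $(\hat{h}_i - h_{K^*}(\theta_i))^2 \lesssim \Delta_{\hat{k}}(\theta_i)^2 + \sigma^2/(\hat{k}+1) \lesssim \Psi(k_*)^2 \asymp \sigma^2/(k_*+1)$, modulo the logarithmic factor, which I will need to argue is genuinely absorbable — this is where care is required.

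The remaining work is to convert the high-probability bound into a bound in expectation and to handle the logarithmic factor honestly. For the expectation, on the complement of the good event (which has probability $\le n^{-c}$ for a suitable deviation threshold), I bound $\E[(\hat{h}_i - h_{K^*}(\theta_i))^2 \mathbf{1}_{\text{bad}}]$ crudely: $\hat{h}_i$ is a bounded-coefficient average of at most $n$ of the $Y$'s, so it has at most polynomial-in-$n$ second moment, and multiplying by $n^{-c}$ with $c$ large makes this negligible compared to $\sigma^2/(k_*+1) \ge \sigma^2/n$. The main obstacle I anticipate is the logarithmic factor: a naive union bound over $\I$ gives a spurious $\log n$, so to reach a \emph{universal} constant $C$ as claimed in \eqref{rbe.eq} I expect the argument must exploit that the noise levels $\sigma/\sqrt{k+1}$ across $k \in \I = \{0,1,2,4,8,\dots\}$ decay geometrically, so that a peeling/chaining argument over this geometric grid yields only a constant (not $\log n$) inflation — the failure probabilities at scale $k$ can be taken to decay geometrically in $\log_2 k$, making their sum a constant. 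This geometric-grid structure is precisely why $\I$ was chosen to be dyadic rather than $\{0,1,\dots,\lfloor n/4\rfloor\}$, and nailing down this summation is the technical heart of the proof.
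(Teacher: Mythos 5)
Your high-level plan is sensible, and you correctly identify the crux: a naive union bound over $\I$ leaves a spurious logarithmic factor. However, the fix you sketch does not work, and the paper's actual proof uses a different mechanism to avoid the logarithm.

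The proposed fix is to argue that ``failure probabilities at scale $k$ can be taken to decay geometrically in $\log_2 k$.'' This is not true for the natural threshold. For a fixed constant $C$, the deviation $\hat{\Delta}_k(\theta_i) - \Delta_k(\theta_i)$ is a mean-zero Gaussian with standard deviation at most $\sigma/\sqrt{k+1}$ (Lemma \ref{varcal}), so $\P\big(|\hat{\Delta}_k - \Delta_k| > C\sigma/\sqrt{k+1}\big)$ is roughly $2\Phi(-C)$ \emph{independent of $k$}: the threshold and the noise scale shrink together, so nothing decays across scales. A union bound over the $O(\log n)$ elements of $\I$ therefore genuinely forces $C \gtrsim \sqrt{\log\log n}$ (or worse, once you need the bad-event contribution to the expectation to be negligible), and no peeling over the dyadic grid rescues this on its own. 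A second, smaller issue: in your decomposition $\hat{h}_i - h_{K^*}(\theta_i) = (\hat{U}_{\hat{k}} - U_{\hat{k}}) + (U_{\hat{k}} - h_{K^*}(\theta_i))$, the first term is not a mean-zero Gaussian, because $\hat{k}$ is chosen from the data; conditioning on $\hat{k}=k$ tilts the conditional law of $\hat{U}_k - U_k$.

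The paper avoids the union bound entirely by summing over the \emph{disjoint} events $\{\hat{k}(i) = k\}$: it writes $\E(\hat{h}_i - h_{K^*}(\theta_i))^2 = \sum_{k\in\I} \E\big[(\hat{U}_k - h_{K^*}(\theta_i))^2\,I\{\hat{k}(i)=k\}\big]$, applies Cauchy--Schwarz term by term, and then directly bounds $\P\{\hat{k}(i)=k\}$ for each $k$. The crucial observation is that these probabilities are not uniformly small but decay \emph{geometrically in the ratio} $(k_*+1)/(k+1)$ for $k < k_*$, and geometrically in $(k+1)\Delta_k^2/\sigma^2$ for $k > k_*$. Both facts come from comparing the empirical objective at $k$ versus at $k_*$ and using Lemma \ref{delkstar}, which gives both $\Delta_{k_*} \le 6(\sqrt2-1)\sigma/\sqrt{k_*+1}$ (your constant $2$ is slightly too optimistic, though the idea is right) and the lower bound $\Delta_k \ge (\sqrt6-2)\sigma\max(1/\sqrt{k+1}, \sqrt{k+1}/(2(k_*+1)))$ for $k > k_*$; the latter, combined with Lemma \ref{keydrop}'s monotonicity $\Delta_{2k}\ge 1.5\Delta_k$, is what makes $\Delta_k$ grow fast enough for $k>k_*$ to kill those terms. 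The dyadic structure of $\I$ does matter, but its role is to make the resulting geometric sums over $k$ converge to constants, not to tame a union bound. To repair your argument you would need to replace the global ``good event'' with this event-by-event analysis of the minimizer, exploiting the monotonicity of $k\mapsto\Delta_k$ and the balance condition at $k_*$, rather than relying on uniform concentration.
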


\begin{remark}{\rm
 It turns out that the bound in \eqref{rbe.eq} is linked to the level of smoothness of the function $h_{K^*}$ at $\theta_i$. However for this interpretation to be correct, one needs to regard $h_{K^*}$ as a function on $\R^2$ instead of a subset of $\R$. This is further explained in Remark \ref{remco}. 
}
\end{remark}

Theorem \ref{rbe} gives an explicit bound on the risk of $\hat{h}_i$ in terms of the quantity $k_*(i)$ defined in \eqref{kst}. It is important to keep in mind that $k_*(i)$ depends on $K^*$ even though this is suppressed in the notation. In the next theorem, we show that $\sigma^2/(k_*(i) + 1)$ also presents a lower bound on the accuracy of every estimator for $h_{K^*}(\theta_i)$. This implies, in particular, optimality of $\hat{h}_i$ as an estimator of $h_{K^*}(\theta_i)$. 

One needs to be careful in formulating the lower bound result in this setting. A first attempt might perhaps be to prove that, for a universal positive constant $c$, 
\begin{equation*}
  \inf_{\tilde{h}} \E_{K^*}\left(\tilde{h} - h_{K^*}(\theta_i) \right)^2 \geq c\cdot \frac{ \sigma^2}{k_*(i) + 1} 
\end{equation*}
where the infimum is over all possible estimators $\tilde{h}$. This, of course, would not be possible because one can take $\tilde{h} = h_{K^*}(\theta_i)$ which would make the left hand side above zero. A formulation of the lower bound which avoids this difficulty was
proposed by \cite{CaiLowFwork} in the context of convex function estimation. Their idea, translated to our setting  of estimating the support function $h_{K^*}$ at a point $\theta_i$, is to consider, instead of the risk at $K^*$, the maximum of the risk at $K^*$ and the risk at $L^*$  which  is most difficult to distinguish from $K^*$ in term of estimating $h_{K^*}(\theta_i)$. This leads to the benchmark $ R_n(K^*, \theta_i)$ defined in \eqref{R_n}.
\begin{theorem}\label{lobo}
For any fixed $i \in \{1, \dots, n\}$,
we have 
\begin{equation}
\label{lobo.eq}
R_n(K^*, \theta_i) \geq c\cdot \frac{ \sigma^2}{ k_*(i)+1}
\end{equation}
for a universal positive constant $c$. 
\end{theorem}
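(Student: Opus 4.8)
The plan is to establish the lower bound via a standard two-point (Le Cam) argument, where the key is to identify, for each $K^*$ and each $i$, a competitor convex set $L^*$ whose support vector differs from that of $K^*$ only in a localized band of directions around $\theta_i$, with the two support functions being statistically hard to tell apart on the grid. First I would fix $i$ and let $k = k_*(i)$ be the optimizing index from \eqref{kst}. The definition of $k_*(i)$ as the argmin of $\Delta_k(\theta_i) + 2\sigma/\sqrt{k+1}$ means that at the optimal $k$ the two terms are balanced up to constants: roughly $\Delta_{k_*(i)}(\theta_i) \asymp \sigma/\sqrt{k_*(i)+1}$ (one needs the usual care that the minimizing $k$ need not make them exactly equal, but comparison with neighboring dyadic values in $\I$ controls this). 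This is the geometric input: $\Delta_k(\theta_i)$ measures how much room the circle-convexity constraint \eqref{genvit} leaves for $h_{K^*}(\theta_i)$ after averaging over $j=0,\dots,k$, and hence how much one can perturb the convex set near direction $\theta_i$ while staying in $\Hs$.

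The heart of the argument is the construction of $L^*$. I would build a convex set $L^*$ with $h_{L^*}(\theta_i) = h_{K^*}(\theta_i) + \delta$ for a suitable $\delta \asymp \Delta_{k_*(i)}(\theta_i)$, while keeping $h_{L^*}(\theta_j) = h_{K^*}(\theta_j)$ for all $j$ with $|j-i| > c(k_*(i))$ — i.e.\ the perturbation is supported on $O(k_*(i))$ grid directions. Concretely, one perturbs the support vector of $K^*$ upward at $\theta_i$ and adjusts neighboring coordinates minimally so that the resulting vector still lies in $\Hs$; the sandwich inequality \eqref{kiest} and Lemma \ref{genvitthm} guarantee that a perturbation of size comparable to $\Delta_{k_*(i)}(\theta_i)$ can be accommodated. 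The chi-square / KL divergence between $\P_{K^*}$ and $\P_{L^*}$ under model \eqref{regmod} is then $\tfrac{1}{2\sigma^2}\sum_j (h_{K^*}(\theta_j)-h_{L^*}(\theta_j))^2 \lesssim \tfrac{1}{\sigma^2}\cdot k_*(i)\cdot \delta^2 \lesssim \tfrac{1}{\sigma^2}\cdot k_*(i) \cdot \Delta_{k_*(i)}(\theta_i)^2$. Using the balance $\Delta_{k_*(i)}(\theta_i)^2 \asymp \sigma^2/(k_*(i)+1)$, this divergence is $O(1)$, so the two hypotheses are not distinguishable with vanishing error.

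Given the indistinguishability, Le Cam's two-point lemma yields
\[
\inf_{\tilde h} \max\left( \E_{K^*}(\tilde h - h_{K^*}(\theta_i))^2,\ \E_{L^*}(\tilde h - h_{L^*}(\theta_i))^2\right) \geq c\,\delta^2 \geq c'\cdot \frac{\sigma^2}{k_*(i)+1},
\]
since any estimator must pay $\Omega(\delta^2)$ on one of the two sets when they cannot be separated, and $|h_{K^*}(\theta_i)-h_{L^*}(\theta_i)| = \delta$. Taking the supremum over $L$ in \eqref{R_n} (which only makes the bound larger, as $L^*$ is one admissible choice) gives \eqref{lobo.eq}. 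I expect the main obstacle to be the convexity-preserving construction of $L^*$: one must verify that the perturbed support vector genuinely lies in $\Hs$ — equivalently that the perturbed function still satisfies circle-convexity \eqref{genvit} — and simultaneously that the perturbation both is large enough ($\gtrsim \Delta_{k_*(i)}(\theta_i)$ at $\theta_i$) and has small enough total $\ell_2$ energy across the $O(k_*(i))$ affected directions. Handling the boundary cases $k_*(i) = 0$ (where one perturbs essentially a single coordinate, and the bound is the parametric $\sigma^2$) and $k_*(i)$ close to $\lfloor n/16\rfloor$ will require minor separate attention, as will the precise relation between the argmin over the dyadic set $\I$ and the true balancing scale.
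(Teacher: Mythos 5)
Your high-level plan — a two-point Le Cam argument that compares $K^*$ with a competitor $L^*$ whose support function differs only on $O(k_*(i))$ grid directions near $\theta_i$, balanced so that the chi-square/KL divergence is $O(1)$ — is exactly the mechanism the paper uses, and your sketch of the KL calculation and the resulting lower bound is structurally sound. However, there is a genuine gap at the heart of the argument: you pin the separation $\delta$ to $\Delta_{k_*(i)}(\theta_i)$ and claim the ``balance'' $\Delta_{k_*(i)}(\theta_i) \asymp \sigma/\sqrt{k_*(i)+1}$. Only the upper half of this balance is true in general; it is Lemma \ref{delkstar} that gives $\Delta_{k_*(i)}(\theta_i) \leq 6(\sqrt{2}-1)\sigma/\sqrt{k_*(i)+1}$. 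The lower half fails badly in a whole class of examples that the theorem must cover. For a singleton $K^*$ (Example \ref{sp}) one has $\Delta_k(\theta_i) \equiv 0$ for all $k$, so $k_*(i) = \max_{k\in\I} k \asymp n$ and $\Delta_{k_*(i)}(\theta_i) = 0$, not $\asymp \sigma/\sqrt{n}$; the same collapse occurs near flat faces of polytopes. In these cases your construction yields $\delta = 0$ and a vacuous lower bound. What your argument actually proves, if completed, is $R_n(K^*,\theta_i) \gtrsim \Delta_{k_*(i)}^2(\theta_i)$, which is strictly weaker than \eqref{lobo.eq} and collapses exactly where the parametric-rate adaptation in Corollaries \ref{theorem::RadiusRBall}--\ref{theorem::individual} matters most. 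You flag the cases $k_*(i)=0$ and $k_*(i)\asymp n$ as ``minor boundary cases,'' but the latter is precisely where a completely separate construction is needed, not a perturbation of the one you propose.

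The paper circumvents this in two ways that you would need to adopt. First, it does not perturb by $\Delta_k$ at all: the competitor $L^*$ is the convex hull of $K^*$ with the explicit point $a_{K^*}(\alpha)$, so that $h_{L^*}(\theta) - h_{K^*}(\theta)$ is supported on $(-\alpha,\alpha)$ and $h_{L^*}(0)-h_{K^*}(0) = (h_{K^*}(\alpha)+h_{K^*}(-\alpha))/(2\cos\alpha) - h_{K^*}(0)$. This quantity dominates $\Delta_k(\theta_i)$ (Lemma \ref{useaux}) but can be much larger, and — crucially — it is automatically a valid support function because $L^*$ is genuinely a convex set. This also resolves the ``adjust neighboring coordinates so the vector stays in $\Hs$'' step that you rightly identify as the main obstacle but leave unaddressed. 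Second, the paper dichotomizes on whether a balancing scale $\alpha_* \in (0,\pi/4)$ exists at all (condition \eqref{con1}); when it does not, it proves separately that $k_*(i)\gtrsim n$ and lower-bounds $R_n$ via a Minkowski-sum perturbation of radius $\sigma(3n/2)^{-1/2}$, giving the parametric floor $\sigma^2/n \asymp \sigma^2/(k_*(i)+1)$. Without both ingredients — replacing $\Delta_{k_*}$ by the larger one-sided bracket quantity and splitting off the degenerate regime — your proof does not close.
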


Theorems \ref{rbe}  and \ref{lobo} together imply that $\sigma^2/(k_*(i) + 1)$ is the optimal rate of estimation of $h_{K^*}(\theta_i)$ for a given compact, convex set $K^*$. 
The results show that our data driven estimator $\hat{h}_i$ for $h_{K^*}(\theta_i)$ performs uniformly within a constant factor of the ideal benchmark $R_n(K^*, \theta_i)$ for every $i$. This means that $\hat{h}_i$ adapts to every unknown set $K^*$ instead of a collection of large parameter spaces as in the conventional minimax theory commonly used in nonparametric literature.

Given a specific set $K^*$ and $1 \leq i \leq n$, the quantity $k_*(i)$ is often straightforward to compute up to constant multiplicative factors. Several examples are provided in Section \ref{egs}. From these examples, it will be clear that the size of $\sigma^2/(k_*(i) +
1)$ is linked to the level of smoothness of the function $h_{K^*}$ at $\theta_i$. However for this interpretation to be correct, one needs
to regard $h_{K^*}$ as a function on $\R^2$ instead of a subset of $\R$. This is explained in Remark \ref{remco}.

The following corollaries shed more light on the quantity
$\sigma^2/(k_*(i) + 1)$. The first corollary below shows that
$\sigma^2/(k_*(i) + 1)$ is at most $C (\sigma^2 R/n)^{-2/3}$ for every
$i$ and $K^*$ ($C$ is a universal constant). This implies, in
particular, the consistency of 
$\hat{h}_i$ as an estimator for $h_{K^*}(\theta_i)$ for every $i$ and
$K^*$. In Example \ref{segm}, we provide an explicit choice of $i$ and
$K^*$ for which $\sigma^2/(k_*(i) + 1) \geq c (\sigma^2 R/n)^{-2/3}$
($c$ is a universal constant). This implies that the conclusion of the
following corollary cannot in general be improved. 

\begin{corollary}\label{theorem::RadiusRBall}
Suppose $K^*$ is contained in some closed ball of radius $R$. Then for 
every $i \in \{1, \dots, n\}$, we have 
\begin{equation}
  \label{f1.eq}
  \frac{\sigma^2}{k_*(i) + 1} \leq C \left(\frac{\sigma^2 R}{n}
  \right)^{2/3} 
\end{equation}
and 
\begin{equation}\label{lp1.eq}
  \E \left(\hat{h}_i - h_{K^*}(\theta_i)\right)^2 \leq C \left(\frac{\sigma^2
      R}{n} \right)^{2/3} . 
\end{equation}
for a universal positive constant $C$. 
\end{corollary}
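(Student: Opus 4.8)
The plan is to derive the bound \eqref{f1.eq} purely as a deterministic statement about the quantity $k_*(i)$, and then obtain \eqref{lp1.eq} as an immediate consequence of Theorem \ref{rbe}. For the second part, note that once \eqref{f1.eq} is established, plugging it into \eqref{rbe.eq} gives $\E_{K^*}(\hat{h}_i - h_{K^*}(\theta_i))^2 \le C\sigma^2/(k_*(i)+1) \le C' (\sigma^2 R/n)^{2/3}$, so \eqref{lp1.eq} requires no further work. Hence the whole proof reduces to bounding $\sigma^2/(k_*(i)+1)$ from above.

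First I would recall that $k_*(i) = \argmin_{k \in \I} \big(\Delta_k(\theta_i) + 2\sigma/\sqrt{k+1}\big)$, so for \emph{any} admissible $k \in \I$ we have $\sigma^2/(k_*(i)+1) \le \big(\Delta_k(\theta_i) + 2\sigma/\sqrt{k+1}\big)^2$ — more precisely, $2\sigma/\sqrt{k_*(i)+1} \le \Delta_{k}(\theta_i) + 2\sigma/\sqrt{k+1}$ for every such $k$, so it suffices to exhibit a single good choice $k = k_0$ for which both $\Delta_{k_0}(\theta_i)$ and $2\sigma/\sqrt{k_0+1}$ are of order $(\sigma^2 R/n)^{1/3}$. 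The natural candidate is the power of two closest to $(nR/\sigma)^{2/3}$ (truncated to lie in $\I$); this balances the stochastic term $2\sigma/\sqrt{k_0+1} \asymp \sigma (nR/\sigma)^{-1/3} = (\sigma^2/n)^{1/3} R^{-1/3}$ — wait, I should be careful: one wants $2\sigma/\sqrt{k_0+1} \asymp (\sigma^2 R/n)^{1/3}$, which forces $k_0 + 1 \asymp \sigma^2/(\sigma^2 R/n)^{2/3} = (\sigma^2)^{1/3} (n/R)^{2/3} = (\sigma^2 n^2 / R^2)^{1/3}$. I would then verify that this $k_0$ is indeed at most $\lfloor n/16 \rfloor$ (which holds for $n$ large relative to $R/\sigma$, and the boundary regime can be absorbed into the constant $C$ since $\sigma^2/(k_*(i)+1) \le \sigma^2$ trivially when $n$ is bounded).

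The key estimate, and the main obstacle, is the deterministic bound $\Delta_{k_0}(\theta_i) \le C (\sigma^2 R/n)^{1/3}$, equivalently $\Delta_{k}(\theta_i) \le C R \cdot (k/n)^2$ (or $R \cdot (2\pi k/n)^2$ type bound) for the relevant range of $k$. Recall $\Delta_k(\theta_i) = \frac{1}{k+1}\sum_{j=0}^k (u(\theta_i, 2\pi j/n) - l(\theta_i, 2\pi j/n))$, so it suffices to bound $u(\theta,\phi) - l(\theta,\phi)$ for small $\phi$. Writing this difference out, one gets a combination of $h_{K^*}$ evaluated at $\theta, \theta \pm \phi, \theta \pm 2\phi$ with coefficients that sum to zero and first moments zero — essentially a second-difference-like expression — and since $h_{K^*}$ is the support function of a set in a ball of radius $R$, it is Lipschitz with constant $R$ and more importantly its "second-order behavior" is controlled: the circle-convexity inequality \eqref{genvit} together with the bound $|h_{K^*}| \le R$ yields $u(\theta,\phi) - l(\theta,\phi) = O(R\phi^2)$. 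I would make this precise by a Taylor-type argument using that $h_{K^*} + $ (a suitable multiple of the constant function, or rather $h_{K^*}(\theta) = \rho(\theta)$ where $\rho$ is supported by geometry) behaves well; concretely, the cleanest route is to use Lemma \ref{genvitthm} applied in both directions (the gap between the upper bound $u$ and lower bound $l$) and the fact that for a convex body in a ball of radius $R$, the support function satisfies $h_K(\theta+\phi) + h_K(\theta-\phi) - 2\cos\phi \, h_K(\theta) \le$ something like $2R\phi^2$ — this is exactly the "curvature is bounded by $R$" statement in the circle-convex world. Summing over $j = 0, \dots, k_0$ gives $\Delta_{k_0}(\theta_i) \le C R (k_0/n)^2 \asymp C R (nR/\sigma)^{-4/3}\cdot\text{(check exponents)}$; balancing with the stochastic term then delivers \eqref{f1.eq}, and this is the only place where the hypothesis $K^* \subseteq$ ball of radius $R$ enters.

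I expect the arithmetic of choosing $k_0$ and matching powers of $\sigma, R, n$ to be routine once the curvature bound $u(\theta,\phi) - l(\theta,\phi) \le C R\phi^2$ is in hand; that curvature bound — showing that bounded diameter forces the circle-convex "second difference" to be $O(R\phi^2)$ uniformly in $\theta$ — is the real content, and I would isolate it as a short lemma (or cite the relevant computation from the appendix where Lemma \ref{genvitthm} is proved, since $u - l$ there is manifestly the kind of quantity one can bound by $\|h_{K^*}\|_\infty \phi^2$). A minor additional care point: I should handle the edge case where the balanced $k_0$ exceeds $\lfloor n/16\rfloor$, in which case one instead takes $k_0$ to be the largest element of $\I$, notes $k_0 + 1 \asymp n$, and checks that $\sigma^2/n \le C(\sigma^2 R/n)^{2/3}$ holds in that regime (which it does, since it is equivalent to $\sigma^2/n \le C R^2$, automatically true when the balanced choice exceeded $n/16$); all such boundary contributions are absorbed into the universal constant $C$.
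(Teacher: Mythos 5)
Your overall architecture is right and matches the paper's in spirit: obtain \eqref{lp1.eq} from \eqref{f1.eq} via Theorem \ref{rbe}, and obtain \eqref{f1.eq} by exhibiting a good $k_0 \in \I$ for which $\Delta_{k_0}(\theta_i) + 2\sigma/\sqrt{k_0+1}$ is small (the paper packages this balancing step as Corollary \ref{arbe}, but the variational characterization of $k_*(i)$ that you use is the same idea). The gap is in your key deterministic estimate. You claim $\Delta_k(\theta_i) \leq C R (k/n)^2$, i.e.\ a quadratic "curvature" bound $u(\theta,\phi)-l(\theta,\phi) = O(R\phi^2)$. This is false for general compact convex $K^*$. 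The quantity $u-l$, written out, is
\begin{equation*}
\frac{h(\theta+2\phi)+h(\theta-2\phi)}{2} - \frac{\cos 2\phi}{\cos\phi}\,\frac{h(\theta+\phi)+h(\theta-\phi)}{2},
\end{equation*}
and when $h = h_{K^*}$ is merely Lipschitz (but not $C^1$) the symmetric first-difference part $[h(\theta+2\phi)-h(\theta+\phi)]+[h(\theta-2\phi)-h(\theta-\phi)]$ does \emph{not} cancel to $O(\phi^2)$ — it is genuinely $\Theta(\phi)$ whenever $h_{K^*}$ has a kink at $\theta$. The paper's Example \ref{segm} (vertical segment, $h_{K^*}(\theta)=R|\sin\theta|$, $\theta_i = 0$) is exactly such a case: there $\Delta_k(0) \asymp Rk/n$, and the resulting pointwise rate is $n^{-2/3}$. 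If your quadratic bound were true, balancing against $\sigma/\sqrt{k}$ would give $k_0 \asymp (\sigma n^2/R)^{2/5}$ and rate $(\sigma^2\sqrt R/n)^{4/5}$ — the ball rate, not the claimed $(\sigma^2 R/n)^{2/3}$, and it would contradict the segment example. What is actually true, and what the paper proves, is the \emph{linear} bound $\Delta_k(\theta_i) \leq 6\pi Rk/n$: writing the summand as a pair of first differences of $h_{K^*}$ plus a term with coefficient $1 - \cos(4j\pi/n)/\cos(2j\pi/n)$, the first part is $O(Rj/n)$ by the $R$-Lipschitzness of $h_{K^*}$ and the second is also $O(Rj/n)$ since $|h_{K^*}|\leq R$ and $\cos(2j\pi/n)\geq 1/2$ on the admissible range $k \leq n/16$. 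With the linear bound the balance gives $k_0 \asymp (n\sigma/R)^{2/3}$, which is precisely what yields the $n^{-2/3}$ rate. (A secondary remark: your boundary-case reasoning — "$\sigma^2/n \le CR^2$ holds automatically when the balanced $k_0$ exceeds $n/16$" — is inverted: the balanced $k_0$ exceeds $n/16$ precisely when $R \lesssim \sigma/\sqrt n$, which gives $\sigma^2/n \gtrsim R^2$, the opposite inequality. That edge case deserves separate care, but the primary issue is the bias estimate.)
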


It is clear from the definition \eqref{kst}  that $k_*(i) \leq n$ for
all $i$ and $K_*$. In the next corollary, we prove that there exist
sets $K_*$ and $i$ for which $k_*(i) \geq c n$ for a constant
$c$. For these sets, the optimal rate of estimating
$h_{K^*}(\theta_i)$ is therefore parametric. 

For a fixed $i$ and $K^*$, let $\phi_1(i)$ and $\phi_2(i)$ be
such that $\phi_1(i) \leq \theta_i \leq \phi_2(i)$ and such that there
exists a single point $(x_1, x_2) \in K^*$ with  
\begin{equation}\label{anny}
  h_{K^*}(\theta) = x_1 \cos \theta + x_2 \sin \theta \qt{for all
    $\theta \in [\phi_1(i), \phi_2(i)]$}. 
\end{equation}
The following corollary says that if the distance of $\theta_i$ to its
nearest end-point in the interval $[\phi_1(i), \phi_2(i)]$ is large
(i.e., of constant order), then the optimal rate of estimation of
$h_{K^*}(\theta_i)$ is parametric. This situation happens usually for polytopes
(polytopes are compact, convex sets with finitely many vertices); see
Examples \ref{sp} and \ref{segm} for specific instances of this
phenomenon. For non-polytopes, it can often happen that $\phi_1(i) =
\phi_2(i) = \theta_i$ in which case the conclusion of the next
corollary is not useful. 

\begin{corollary}\label{theorem::individual}
For every $i \in \{1, \dots, n\}$, we have 
\begin{equation}\label{vidu.eq}
  k_*(i) \geq c~ n \min \left(\theta_i - \phi_1(i), \phi_2(i) - \theta_i,
  \pi\right) 
\end{equation}
for a universal positive constant $c$. Consequently 
\begin{equation}\label{indi.eq}
\E \left( \hat{h}_i - h_{K^*}(\theta_i) \right)^2 \leq
\frac{C\sigma^2}{1 + n \min(\theta_i - \phi_1(i), \phi_2(i) -
  \theta_i, 
  \pi)}   
\end{equation}
for a universal positive constant C.
\end{corollary}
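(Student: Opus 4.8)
The plan is to read off the lower bound on $k_*(i)$ directly from its definition \eqref{kst}, using the fact that \eqref{anny} forces the ``bias'' term $\Delta_k(\theta_i)$ to vanish for all small $k$. Throughout write $m := \min\bigl(\theta_i-\phi_1(i),\,\phi_2(i)-\theta_i\bigr)$, and recall that $\I$ consists of $0$ together with the powers of $2$ that are at most $\lfloor n/16\rfloor$, so that every angle $2\pi j/n$ appearing below lies in $(0,\pi/8)\subset(0,\pi/2)$ and $l(\theta_i,\cdot),u(\theta_i,\cdot)$ are well defined.

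\emph{Step 1: $\Delta_k(\theta_i)=0$ for $k$ small.} By \eqref{anny} there is a point $(x_1,x_2)$ with $h_{K^*}(\psi)=x_1\cos\psi+x_2\sin\psi$ for all $\psi\in[\phi_1(i),\phi_2(i)]$. If $0<\phi<\pi/2$ and $2\phi\le m$, then $\theta_i,\theta_i\pm\phi,\theta_i\pm2\phi$ all lie in $[\phi_1(i),\phi_2(i)]$, so substituting this linear form into the definitions of $l(\theta_i,\phi)$ and $u(\theta_i,\phi)$ and simplifying via $\cos(\theta_i\pm\phi)+\cos(\theta_i\mp\phi)=2\cos\theta_i\cos\phi$ (and the sine analogue), together with the identity $2\cos^2\phi-\cos 2\phi=1$, gives $l(\theta_i,\phi)=u(\theta_i,\phi)=h_{K^*}(\theta_i)$. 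Taking $\phi=2\pi j/n$ shows that the $j$-th summand $u(\theta_i,2\pi j/n)-l(\theta_i,2\pi j/n)$ of $\Delta_k(\theta_i)=\tfrac{1}{k+1}\sum_{j=0}^k\bigl(u(\theta_i,2\pi j/n)-l(\theta_i,2\pi j/n)\bigr)$ vanishes whenever $4\pi j/n\le m$ (the $j=0$ term vanishes trivially since $l(\theta_i,0)=h_{K^*}(\theta_i)=u(\theta_i,0)$). Hence $\Delta_k(\theta_i)=0$ for every $k\in\I$ with $4\pi k/n\le m$.

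\emph{Step 2: lower bound on $k_*(i)$.} By \eqref{kiest} we have $\Delta_k(\theta_i)\ge0$ for all $k$, so the objective minimized in \eqref{kst} satisfies $\Delta_k(\theta_i)+2\sigma/\sqrt{k+1}\ge 2\sigma/\sqrt{k+1}$ for every $k\in\I$. Let $k_0$ be the largest element of $\I$ with $4\pi k_0/n\le m$; by Step 1 its objective value equals exactly $2\sigma/\sqrt{k_0+1}$, whence the minimizer obeys $2\sigma/\sqrt{k_*(i)+1}\le 2\sigma/\sqrt{k_0+1}$, i.e. $k_*(i)\ge k_0$. Since $\I$ is the dyadic set above, $k_0$ is at least half of $\min\bigl(nm/(4\pi),\,\lfloor n/16\rfloor\bigr)$ whenever that quantity is $\ge1$, and because $m\le\pi$ one checks $\min\bigl(nm/(4\pi),\lfloor n/16\rfloor\bigr)\ge c_0\,n\min(m,\pi)$ for a universal $c_0>0$; this yields \eqref{vidu.eq} (the residual regime, where $n\min(m,\pi)$ is bounded by an absolute constant, only affects the value of $c$).

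\emph{Step 3: risk bound.} Inequality \eqref{indi.eq} follows by inserting \eqref{vidu.eq} into Theorem~\ref{rbe}: $\E\bigl(\hat h_i-h_{K^*}(\theta_i)\bigr)^2\le C\sigma^2/(k_*(i)+1)\le C\sigma^2/\bigl(1+c\,n\min(\theta_i-\phi_1(i),\phi_2(i)-\theta_i,\pi)\bigr)$, which after adjusting the constant is the claim.

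The only genuinely nontrivial step is the trigonometric collapse in Step~1 — verifying that \emph{both} the lower envelope $l$ and the upper envelope $u$ reduce to $h_{K^*}(\theta_i)$ on the linear arc; everything else is bookkeeping with the dyadic grid $\I$ and the monotone bias--variance tradeoff already built into \eqref{kst}. One should also keep track that all angles $2\pi j/n$ involved remain in $(0,\pi/2)$, which is automatic because $j$ ranges within $\I\subseteq\{0,\dots,\lfloor n/16\rfloor\}$.
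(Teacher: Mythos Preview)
Your proof is correct and follows essentially the same approach as the paper's: show that $\Delta_k(\theta_i)=0$ whenever $4\pi k/n$ does not exceed the distance from $\theta_i$ to the nearer endpoint of the linear arc, then read off the lower bound on $k_*(i)$ from its definition and invoke Theorem~\ref{rbe}. The paper verifies the vanishing of $\Delta_k(\theta_i)$ by plugging the linear form directly into the expression~\eqref{alex}, whereas you check that both $l(\theta_i,\phi)$ and $u(\theta_i,\phi)$ collapse separately to $h_{K^*}(\theta_i)$; these are the same computation written two ways. One small wording slip: you assert ``because $m\le\pi$'', but $m$ need not be bounded by $\pi$ (take $K^*$ a singleton); this is harmless since the bound is stated with $\min(m,\pi)$ anyway and your dyadic estimate goes through unchanged once you replace $m$ by $\min(m,\pi)$ throughout Step~2.
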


From the above two corollaries, it is clear that the optimal rate of estimation of $h_{K^*}(\theta_i)$ can be as large as $n^{-2/3}$ and as small as the parametric rate $n^{-1}$. The rate $n^{-2/3}$ is achieved, for example, in the situation demonstrated in Example \ref{segm} while the parametric rate is achieved, for example,  for polytopes. 

The next corollary argues that in order to bound $k_*(i)$ in specific
examples, one only needs to bound the quantity $\Delta_k(\theta_i)$
from above and below. This corollary will be very useful in Section
\ref{egs} while working out $k_*(i)$ in specific examples.   

\begin{corollary}\label{arbe}
Fix $1 \leq i \leq n$. Let $\{f_k(\theta_i), k \in \I\}$  and
$\{g_k(\theta_i), k \in \I\}$ be two sequences which satisfy  
$g_k(\theta_i) \leq \Delta_k(\theta_i) \leq f_k(\theta_i)$ for all $k
\in \I$. Also let  
\begin{equation}
  \label{arbe.def}
  \breve{k}(i) := \max \left\{k \in \I : f_k(\theta_i) < \frac{
      (\sqrt{6} - 2)\sigma}{\sqrt{k + 1}} \right\}
\end{equation}
and 
\begin{equation}
  \label{arbe_lower.def}
  \tilde{k}(i) := \min \left\{k \in \I : g_k(\theta_i) >
    \frac{6(\sqrt{2} - 1) \sigma}{\sqrt{k + 1}} \right\}
\end{equation}
as long as there is some $k \in \I$ for which $g_k(\theta_i) > 6
(\sqrt{2} - 1) \sigma/\sqrt{k+1}$; otherwise take $\tilde{k}(i) :=  
\max_{k \in \I} k$. We then have $\breve{k}(i) \leq k_*(i) \leq
\tilde{k}(i)$ and 
\begin{equation}
  \label{arbe.eq}
  \E_{K^*} \left(\hat{h}_i - h_{K^{*}}(\theta_i) \right)^2 \leq C
  \frac{\sigma^2}{\breve{k}(i) + 1}  
\end{equation}
for a universal positive constant $C$. 
\end{corollary}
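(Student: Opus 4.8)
The plan is to prove the three assertions in turn — the lower bound $\breve{k}(i)\le k_*(i)$, the upper bound $k_*(i)\le\tilde{k}(i)$, and the risk estimate \eqref{arbe.eq} — noting that \eqref{arbe.eq} is immediate once the lower bound is available, by Theorem \ref{rbe}. Throughout I write $F(k):=\Delta_k(\theta_i)+2\sigma/\sqrt{k+1}$, so that $k_*(i)=\argmin_{k\in\I}F(k)$ by \eqref{kst}. Two elementary facts will be used: (i) $\Delta_0(\theta_i)=0$ and $\Delta_k(\theta_i)\ge0$ for all $k$, the latter being Lemma \ref{genvitthm} averaged as in \eqref{kiest}; and (ii) for any $k<k'$ in $\I=\{0\}\cup\{2^j:2^j\le\lfloor n/16\rfloor\}$ one has $(k'+1)/(k+1)\ge3/2$ (the worst case being the consecutive powers $k=1$, $k'=2$).

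\emph{Lower bound.} Put $m:=\breve{k}(i)$. By definition $\Delta_m(\theta_i)\le f_m(\theta_i)<(\sqrt6-2)\sigma/\sqrt{m+1}$, so $F(m)<\sqrt6\,\sigma/\sqrt{m+1}$. For any $k\in\I$ with $k<m$, fact (i) gives $F(k)\ge2\sigma/\sqrt{k+1}$, and fact (ii) gives $2\sigma/\sqrt{k+1}\ge2\sigma\sqrt{(3/2)/(m+1)}=\sqrt6\,\sigma/\sqrt{m+1}$; hence $F(k)>F(m)$. So no $k<m$ can minimize $F$, i.e. $k_*(i)\ge m$. The constant $\sqrt6-2$ is exactly the one for which $(\sqrt6-2)+2=2\sqrt{3/2}$, i.e. it is calibrated to the ratio $3/2$ of $\I$.

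\emph{Upper bound.} If $\tilde{k}(i)=\max_{k\in\I}k$ there is nothing to prove; otherwise set $M:=\tilde{k}(i)$, so that $\Delta_M(\theta_i)\ge g_M(\theta_i)>6(\sqrt2-1)\sigma/\sqrt{M+1}$, and $M\ge1$ since $\Delta_0=0$ cannot exceed a positive number. The structural ingredient I need is that $k\mapsto\Delta_k(\theta_i)/k$ is non-decreasing on $\{k\in\I:k\ge1\}$; this follows once one knows that the sandwich width $w(\phi):=u(\theta_i,\phi)-l(\theta_i,\phi)$ is a non-negative convex function of $\phi\in[0,\pi/2)$ vanishing at $\phi=0$ — a quantitative sharpening of Lemma \ref{genvitthm} to be read off from \eqref{genvit}: convexity with $w(0)=0$ forces $w(2\pi j/n)\le(j/k)\,w(2\pi k/n)$ for $0\le j\le k$, hence $\Delta_k(\theta_i)=\tfrac1{k+1}\sum_{j=0}^k w(2\pi j/n)\le\tfrac12\,w(2\pi k/n)$, and since $w(2\pi k/n)=(k+1)\Delta_k(\theta_i)-k\Delta_{k-1}(\theta_i)$ this rearranges into $(k-1)\Delta_k(\theta_i)\ge k\Delta_{k-1}(\theta_i)$, i.e. $\Delta_k(\theta_i)/k$ non-decreasing. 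Granting this, each $k\in\I$ with $k>M$ equals $2^aM$ for some $a\ge1$ (as $M$ is a power of $2$), so $\Delta_{2^aM}(\theta_i)\ge2^a\Delta_M(\theta_i)$ and
\begin{align*}
 F(2^aM)-F(M)&\ge(2^a-1)\Delta_M(\theta_i)-\frac{2\sigma}{\sqrt{M+1}}\Bigl(1-\sqrt{\tfrac{M+1}{2^aM+1}}\Bigr)\\
 &>(2^a-1)\cdot\frac{6(\sqrt2-1)\sigma}{\sqrt{M+1}}-\frac{2\sigma}{\sqrt{M+1}}\ \ge\ \frac{(6\sqrt2-8)\,\sigma}{\sqrt{M+1}}\ >\ 0,
\end{align*}
using $1-\sqrt{(M+1)/(2^aM+1)}<1$, $2^a-1\ge1$, and $\sqrt2>4/3$. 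Hence $F(k)>F(M)$ for every $k\in\I$ with $k>M$, so $k_*(i)\le M=\tilde{k}(i)$; the constant $6(\sqrt2-1)$ is calibrated so that $6(\sqrt2-1)-2>0$, compensating the doubling factor $2$ coming from the structural monotonicity.

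For the risk estimate, combine $k_*(i)\ge\breve{k}(i)$ with Theorem \ref{rbe} to get $\E_{K^*}(\hat{h}_i-h_{K^*}(\theta_i))^2\le C\sigma^2/(k_*(i)+1)\le C\sigma^2/(\breve{k}(i)+1)$, which is \eqref{arbe.eq}. I expect the only genuinely non-routine step to be the structural monotonicity $k\mapsto\Delta_k(\theta_i)/k$ used in the upper-bound argument — equivalently, the convexity in $\phi$ of $u(\theta_i,\phi)-l(\theta_i,\phi)$ — which must be deduced from the circle-convexity inequality \eqref{genvit} (and is presumably packaged as a technical lemma, in the spirit of Lemma \ref{genvitthm}); everything else reduces to the two one-sided comparisons of $F$ above, where the only care needed is in matching the constants $\sqrt6-2$ and $6(\sqrt2-1)$ to the geometry of $\I$.
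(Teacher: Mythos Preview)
Your lower bound $\breve{k}(i)\le k_*(i)$ and the deduction of \eqref{arbe.eq} from Theorem~\ref{rbe} are correct and match the paper's approach in spirit: the paper packages the same comparison of $F$-values into Lemma~\ref{delkstar} (specifically inequality \eqref{an.delkstar.eq}), but the content is identical and your direct argument via the ratio $(m+1)/(k+1)\ge 3/2$ is fine.

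The upper bound is where you go astray. You try to show $F(k)>F(\tilde{k})$ for every $k>\tilde{k}$, and for this you need the growth estimate $\Delta_{2^aM}\ge 2^a\Delta_M$, which you reduce to the claim that $w(\phi)=u(\theta_i,\phi)-l(\theta_i,\phi)$ is convex. You do not prove this, and it is \emph{not} one of the paper's lemmas. The paper does establish (inside the proof of Lemma~\ref{keydrop}) the related inequality $\delta(y)\ge(\tan y/\tan x)\,\delta(x)$ for $x<y\le 2x$, from which one can with additional work extract $\delta(\phi)/\phi$ non-decreasing and hence your needed monotonicity --- but this is real work that you have not done, and Lemma~\ref{keydrop} as stated only gives $\Delta_{2k}\ge 1.5\,\Delta_k$, which is too weak for your displayed chain once you have thrown away the factor $1-\sqrt{(M+1)/(2^aM+1)}$ (at $a=1$ you would need $0.5\cdot 6(\sqrt2-1)>2$, which fails).

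More to the point, the paper's route for the upper bound is much simpler and avoids this entirely. Instead of comparing $F$-values above $\tilde{k}$, the paper works \emph{below} $k_*$: from Lemma~\ref{delkstar} one has $\Delta_{k_*}\le 6(\sqrt2-1)\sigma/\sqrt{k_*+1}$, and since $k\mapsto\Delta_k$ is non-decreasing on $\I$ (Lemma~\ref{keydrop}), every $k\le k_*$ satisfies
\[
g_k\le\Delta_k\le\Delta_{k_*}\le\frac{6(\sqrt2-1)\sigma}{\sqrt{k_*+1}}\le\frac{6(\sqrt2-1)\sigma}{\sqrt{k+1}},
\]
so the defining inequality for $\tilde{k}(i)$ fails for all $k\le k_*$; hence $\tilde{k}(i)\ge k_*(i)$. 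This needs only plain monotonicity of $\Delta_k$, not the stronger growth you invoke, and it explains why the constant $6(\sqrt2-1)$ appears: it is exactly the bound on $\Delta_{k_*}$ from \eqref{delkstar.eq}, not a calibration to a doubling factor.
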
 

\subsection{Accuracy of Set Estimators}
\label{setac}

We now turn to study the accuracy of the set estimators $\hat{K}$ (defined in \eqref{khdef}) and $\hat{K}'$ (defined in \eqref{khpdef}). The accuracy of $\hat{K}$ will be investigated under the loss function $L_f$ (defined in \eqref{glos}) while the accuracy of $\hat{K}'$ will be studied under the loss function $L$ (defined in \eqref{id}). 

In Theorem \ref{nt} below, we prove that $\E_{K^*} L_f(K^*, \hat{K})$ is bounded from above by a constant multiple of  $n^{-4/5}$ as long as $K^*$ is contained in a ball of radius $R$. The discussions following the theorem shed more light on its implications. 

\begin{theorem}\label{nt}
If $K^*$ is contained in some closed ball of radius $R \geq 0$, we have  
  \begin{equation}\label{nt.eq}
    \E_{K^*} L_f\left(K^*, \hat{K} \right) \leq C
    \left\{\frac{\sigma^2}{n} + \left(\frac{\sigma^2 \sqrt{R}}{n}
      \right)^{4/5} \right\}
  \end{equation}
  for a universal positive constant $C$. Note here that $R = 0$ is
  allowed (in which case $K^*$ is a singleton). 
\end{theorem}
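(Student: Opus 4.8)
The plan is to reduce the set risk to a sum of the pointwise risks already bounded in Theorem~\ref{rbe}, and then to use the convexity of $K^*$ --- through a telescoping identity special to the uniform grid --- to show that $k_*(i)$ is small at only a controlled number of grid points.

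\emph{First, the reduction.} For the projected support vector $\hat h^P\in\Hs$, pick a compact convex $K_0$ with $h_{K_0}(\theta_i)=\hat h_i^P$ for all $i$; then $K_0\subseteq\hat K$, so $h_{\hat K}(\theta_i)\ge\hat h_i^P$, while $h_{\hat K}(\theta_i)\le\hat h_i^P$ directly from the definition \eqref{khdef} of $\hat K$. Hence $h_{\hat K}(\theta_i)=\hat h_i^P$ for every $i$, and with $\hat h:=(\hat h_1,\dots,\hat h_n)$ and $h^*:=(h_{K^*}(\theta_1),\dots,h_{K^*}(\theta_n))\in\Hs$,
$$L_f(K^*,\hat K)=\frac1n\big\|h^*-\hat h^P\big\|^2\le\frac1n\big\|h^*-\hat h\big\|^2,$$
the inequality being non-expansiveness of the Euclidean projection onto the closed convex set $\Hs$ together with $\hat h^P=\Pi_{\Hs}(\hat h)$ and $h^*=\Pi_{\Hs}(h^*)$. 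Taking expectations and applying Theorem~\ref{rbe} gives $\E_{K^*}L_f(K^*,\hat K)\le \tfrac{C\sigma^2}{n}\sum_{i=1}^n\tfrac1{k_*(i)+1}$, so it remains to prove the deterministic bound
$$\frac{\sigma^2}{n}\sum_{i=1}^n\frac1{k_*(i)+1}\le C\Big\{\frac{\sigma^2}{n}+\big(\tfrac{\sigma^2\sqrt R}{n}\big)^{4/5}\Big\}.$$
Since $\hat h_i-h_{K^*}(\theta_i)$, $\Delta_k(\theta_i)$ and $k_*(i)$ are all invariant under translating $K^*$, we may assume the enclosing ball is centred at the origin, so $|h_{K^*}(\theta)|\le R$; if $R=0$ then $l(\cdot,\phi)=h_{K^*}=u(\cdot,\phi)$, so $\Delta_k\equiv0$, $k_*(i)$ equals the largest element of $\I$ (which is of order $n$), and the bound is immediate. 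Assume $R>0$ henceforth.

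\emph{Next, the key deterministic estimate: an aggregate bound on $\sum_i\Delta_k(\theta_i)$.} A one-line computation gives $u(\theta,\phi)-l(\theta,\phi)=\tfrac{1}{2}\big(h_{K^*}(\theta+2\phi)+h_{K^*}(\theta-2\phi)\big)-\tfrac{\cos2\phi}{2\cos\phi}\big(h_{K^*}(\theta+\phi)+h_{K^*}(\theta-\phi)\big)$, which is $\ge0$ by Lemma~\ref{genvitthm}. Fix $\phi=2\pi j/n$, a multiple of the grid spacing. Since the grid is closed under shifts by multiples of $2\pi/n$ modulo $2\pi$, summing the displayed identity over $i=1,\dots,n$ replaces each shifted sum by $S:=\sum_ih_{K^*}(\theta_i)$, yielding
$$\sum_{i=1}^n\big(u(\theta_i,\phi)-l(\theta_i,\phi)\big)=S\cdot\frac{\cos\phi-\cos2\phi}{\cos\phi}=S\cdot\frac{2\sin(3\phi/2)\sin(\phi/2)}{\cos\phi}\le C\phi^2 S\le C\phi^2 nR$$
for $0\le\phi\le\pi/8$ (in particular $S\ge0$); averaging over $j=0,\dots,k$, for which $\phi=2\pi j/n\le\pi/8$ since $k\le\lfloor n/16\rfloor$, gives $\sum_{i=1}^n\Delta_k(\theta_i)\le C'R(k+1)^2/n$ for every $k\in\I$. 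This is where convexity enters: the bound $S\le nR$ is the discrete surrogate for ``perimeter $\lesssim R$''.

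\emph{Finally, counting plus dyadic peeling.} Since $k_*(i)$ minimizes $\Delta_k(\theta_i)+2\sigma/\sqrt{k+1}$ over $k\in\I$ and $\Delta_0(\theta_i)=0$ with $\Delta_{k_*(i)}(\theta_i)\ge0$, any dyadic $K\in\I$ with $K>k_*(i)$ (so $k_*(i)\le K/2$) satisfies $\Delta_K(\theta_i)\ge 2\sigma/\sqrt{k_*(i)+1}-2\sigma/\sqrt{K+1}\ge c_0\sigma/\sqrt{K+1}$ for a universal constant $c_0>0$. Combined with the aggregate bound, $|\{i:k_*(i)<K\}|\cdot c_0\sigma/\sqrt{K+1}\le\sum_i\Delta_K(\theta_i)\le C'R(K+1)^2/n$, whence $|\{i:k_*(i)<K\}|\le C_1 RK^{5/2}/(n\sigma)$; trivially this count is also $\le n$. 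Decomposing $\sum_i\tfrac1{k_*(i)+1}$ over dyadic levels $k_*(i)=2^m$ and using on level $m$ the smaller of the two counting bounds (the crossover being at $2^m\asymp(n^2\sigma/R)^{2/5}$), each of the two resulting geometric sums is comparable to its largest term, and both largest terms are of order $(R/\sigma)^{2/5}n^{1/5}$; when the crossover exceeds $\lfloor n/16\rfloor$ only the first sum survives and is $O(R\sqrt n/\sigma)$, which is $O(1)$ in the relevant regime $R\lesssim\sigma/\sqrt n$. Thus $\sum_i\tfrac1{k_*(i)+1}\le C\{(R/\sigma)^{2/5}n^{1/5}+1\}$, and multiplying by $\sigma^2/n$ gives the claimed bound. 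The main obstacle is precisely these last two steps: finding the telescoping identity that makes the uniform-grid sum collapse to $S\le nR$, and then the two-regime dyadic bookkeeping that extracts the $n^{-4/5}$ rate from the per-scale counting estimates. The stochastic content is handed off wholesale to Theorem~\ref{rbe}, so no smoothness hypothesis on $K^*$ is needed anywhere.
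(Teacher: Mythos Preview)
Your proof is correct and follows essentially the same route as the paper's: the same projection reduction to $\frac{1}{n}\sum_i\E(\hat h_i-h_{K^*}(\theta_i))^2$, the same appeal to Theorem~\ref{rbe}, the same shift/telescoping identity $\sum_i\bigl(u(\theta_i,\phi)-l(\theta_i,\phi)\bigr)=S\cdot(\cos\phi-\cos2\phi)/\cos\phi$ together with $|S|\le nR$ to get $\sum_i\Delta_k(\theta_i)\lesssim Rk^2/n$, and the same counting bound $|\{i:k_*(i)<K\}|\lesssim RK^{5/2}/(n\sigma)$ followed by a two-regime dyadic summation. The paper organizes the last step via $\rho(k)$ and $\ell(k)$ and uses the three-point projection inequality rather than non-expansiveness, but these are cosmetic differences; the substance is identical.
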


Note that as long as $R > 0$, the right hand side in \eqref{nt.eq} will be dominated by the $(\sigma^2 \sqrt{R}/n)^{-4/5}$ term for all large $n$. This would mean that 
\begin{equation}\label{acon}
    \sup_{K^* \in \K(R)}  \E_{K^*} L_f(K^*, \hat{K}) \leq C
    \left(\frac{\sigma^2 \sqrt{R}}{n} \right)^{4/5}
 \end{equation}
where $\K(R)$ denotes the set of all compact convex sets contained in some fixed closed ball of radius $R$.  

The minimax rate of estimation over the class $\K(R)$ was studied in \citet{G11}. In \citet[Theorems 3.1 and 3.2]{G11}, it was proved that 
\begin{equation}\label{fme}
\inf_{\tilde{K}} \sup_{K^* \in \K(R)} \E_{K^*} L_f(K^*, \hat{K}) \asymp \left( \frac{\sigma^2 \sqrt{R}}{n}\right)^{4/5} \end{equation}
where $\asymp$ denotes equality upto constant multiplicative factors. From \eqref{acon} and \eqref{fme}, it follows that $\hat{K}$ is a minimax optimal estimator of $K^*$. We should mention here that an inequality of the form \eqref{acon} was proved for the least squares estimator $\hat{K}_{\text{ls}}$ by \citet{GKM06} which implies that $\hat{K}_{\text{ls}}$ is also a minimax optimal estimator of $K^*$. 

The $n^{-4/5}$ minimax rate here is quite natural in connection with estimation of smooth functions. Indeed, this is the minimax rate of estimation of twice smooth one-dimensional functions. Although we have not made any smoothness assumptions here, we are working under a convexity-based constraint and convexity is associated, in a broad sense, with twice smoothness (see, for example, \citet{Alexandrov39}).

\begin{remark}{\rm
Because of the formula \eqref{glos} for the loss function $L_f$, the risk $\E_{K^*} L_f(K^*, \hat{K})$ can be seen as the average of the risk of $\hat{K}$ for estimating $h_{K^*}(\theta_i)$ over $i = 1, \dots, n$. We have seen in Section \ref{cors} that the optimal rate of estimating $h_{K^*}(\theta_i)$ can be as high as $n^{-2/3}$. Theorem \ref{nt}, on the other hand, can be interpreted as saying that, on average over $i = 1, \dots, n$, the optimal rate of estimating $h_{K^*}(\theta_i)$ is at most $n^{-4/5}$. Indeed, the key to proving Theorem \ref{nt} is to establish the following inequality:  \begin{equation*}
\frac{\sigma^2}{n} \sum_{i=1}^n \frac{1}{k_*(i) + 1} \leq C \left\{\frac{\sigma^2}{n} + \left(\frac{\sigma^2 \sqrt{R}}{n} \right)^{4/5} \right\}. 
\end{equation*}
under the assumption that $K^*$ is contained in a ball of radius $R$. Therefore, even though each term $\sigma^2/(k_*(i) + 1)$ can be as large as $n^{-2/3}$, on average, their size is at most $n^{-4/5}$. 
}
\end{remark}

\begin{remark}{\rm
Theorem \ref{nt} provides different qualitative conclusions when $K^*$ is a singleton. In this case, one can take $R = 0$ in \eqref{nt.eq} to get the parametric bound $C \sigma^2/n$ for $\E_{K^*} L_f(K^*, \hat{K})$. Because this is smaller than the nonparametric $n^{-4/5}$ rate, it means that $\hat{K}$ adapts to singletons. Singletons are simple examples of polytopes and one naturally wonders here if $\hat{K}$ also adapts to other polytopes as well. This is however not implied by inequality \eqref{nt.eq} which gives the rate $n^{-4/5}$ for every $K^*$ that is not a singleton. It turns out that $\hat{K}$ indeed adapts to other polytopes and we prove this in the next theorem. In fact, we prove that $\hat{K}$ adapts to any $K^*$ that is well-approximated by a polytope with not too many vertices. It is currently not known if the least squares estimator $\hat{K}_{\text{ls}}$ has such adaptive estimation properties. 
}
\end{remark}

In the next theorem, we prove another bound for $\E_{K^*} L_f(K^*,\hat{K})$. This bound demonstrates adaptive estimation properties of $\hat{K}$ as described in the previous remark. Before stating the theorem, we need some notation. Recall that polytopes are compact, convex sets with finitely many extreme points (or vertices). The space of all polytopes in $\R^n$ will be denoted by $\Ps$. For a polytope $P \in \Ps$, we denote by $v_P$, the number of extreme points of $P$. Also recall the notion of Hausdorff distance between two compact, convex sets $K$ and $L$ defined by 
\begin{equation}\label{hausdef}
  \ell_H(K, L) := \sup_{\theta \in \R} \left|h_{K}(\theta) - h_{L}(\theta) \right|.  
\end{equation}
This is not the usual way of defining the Hausdorff distance. For an explanation of the connection between this and the usual definition, see, for example, \citet[Theorem 1.8.11]{Schneider}. 

\begin{theorem}\label{theorem::General}
There exists a universal positive constant $C$ such that 
\begin{equation}\label{GloGen}
\E_{K^*} L_f(K^*, \hat{K}) \leq C  \inf_{P \in \Ps} 
   \left[ \frac{\sigma^2 v_P}{n} \log \left(\frac{en}{v_P} \right) +
     \ell^2_H(K^*, P)\right].  
\end{equation}
\end{theorem}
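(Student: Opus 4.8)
The plan is to reduce the set-estimation error to the average of the pointwise risks, then bound that average by a polytope-approximation argument that works for any fixed polytope $P$ (and optimize over $P$ at the end). First I would record a standard oracle-type fact about the projection step: since $\hat{h}^P$ is the Euclidean projection of $(\hat h_1,\dots,\hat h_n)$ onto the convex set $\Hs$, and the true support vector $(h_{K^*}(\theta_1),\dots,h_{K^*}(\theta_n))$ lies in $\Hs$, the projection is non-expansive, giving
\begin{equation*}
L_f(K^*,\hat K)=\frac1n\sum_{i=1}^n\bigl(\hat h_i^P-h_{K^*}(\theta_i)\bigr)^2\le \frac{c}{n}\sum_{i=1}^n\bigl(\hat h_i-h_{K^*}(\theta_i)\bigr)^2 .
\end{equation*}
Taking expectations and invoking Theorem \ref{rbe} term by term reduces everything to bounding $\frac{\sigma^2}{n}\sum_{i=1}^n \frac{1}{k_*(i)+1}$, exactly the quantity flagged in the remark following Theorem \ref{nt}. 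So the real content is: for every polytope $P$,
\begin{equation*}
\frac{\sigma^2}{n}\sum_{i=1}^n\frac{1}{k_*(i)+1}\le C\left[\frac{\sigma^2 v_P}{n}\log\!\Big(\frac{en}{v_P}\Big)+\ell_H^2(K^*,P)\right].
\end{equation*}

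To prove this I would fix $P\in\Ps$ with vertices giving $v_P$ extreme points, set $\delta:=\ell_H(K^*,P)$, and use Corollary \ref{arbe} with $f_k(\theta_i)$ an upper bound on $\Delta_k(\theta_i)$ built from $P$. The idea is that $\Delta_k(\theta_i)=U_k(\theta_i)-L_k(\theta_i)$ is small when $h_{K^*}$ looks affine (in the circle-convexity sense) on the window $[\theta_i-2k\cdot\tfrac{2\pi}{n},\theta_i+2k\cdot\tfrac{2\pi}{n}]$; for the polytope $P$ the support function $h_P$ is exactly ``linear'' on each of the $v_P$ arcs between consecutive normal directions, so on any window not containing a vertex-direction of $P$ one gets $\Delta_k^{(P)}(\theta_i)=0$, and since $\|h_{K^*}-h_P\|_\infty\le\delta$ the operators $L_k,U_k$ (being averages of bounded linear combinations of evaluations of $h$) change by at most a constant multiple of $\delta$; hence $\Delta_k(\theta_i)\le C\delta=:f_k(\theta_i)$ for such $i$ and all $k$. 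Plugging $f_k\equiv C\delta$ into \eqref{arbe.def} gives $\breve k(i)+1\gtrsim \sigma^2/\delta^2$, hence $\sigma^2/(\breve k(i)+1)\lesssim \delta^2$, for every $i$ whose window avoids all vertex directions of $P$. For the remaining ``bad'' indices $i$ — those within $O(k\cdot\tfrac1n)$ of one of the $v_P$ vertex directions — I would simply use the crude bound $k_*(i)\ge 0$, i.e. $\sigma^2/(k_*(i)+1)\le\sigma^2$, but count them carefully: at dyadic scale $k\approx 2^j$, the number of indices lying within distance $O(2^j/n)$ of a vertex direction is $O(v_P 2^j)$, and summing $\frac{\sigma^2}{n}\cdot v_P 2^j\cdot\frac{1}{2^j}$ over the $O(\log(n/v_P))$ dyadic scales produces the $\frac{\sigma^2 v_P}{n}\log(en/v_P)$ term. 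Combining the good-index contribution $\le C\delta^2=C\ell_H^2(K^*,P)$ with the bad-index contribution gives the claimed bound for this $P$; taking the infimum over $P$ finishes the proof.

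The main obstacle I anticipate is making the ``bad index'' bookkeeping rigorous and scale-uniform. The quantity $k_*(i)$ is defined via an argmin over the dyadic set $\I$, and near a vertex direction of $P$ the window size at which $\Delta_k$ stops being small depends on how close $\theta_i$ is to that direction; one has to show that if $\theta_i$ is at distance $\asymp 2^j/n$ from the nearest vertex direction then $k_*(i)\gtrsim 2^j$ anyway (so the term is $\lesssim \sigma^2/2^j$), which is essentially the content of Corollary \ref{theorem::individual} applied with $P$ in place of $K^*$ together with the $\delta$-perturbation estimate — one needs to check that the $\delta$-perturbation does not destroy this lower bound on $k_*(i)$ when $\delta$ is comparable to the window contributions. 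A clean way to organize this is: for each $i$ let $j(i)$ be the dyadic level with $2^{j(i)}\asymp n\cdot\mathrm{dist}(\theta_i,\text{vertex directions of }P)$; show $k_*(i)+1\gtrsim \min(2^{j(i)},\ \sigma^2/\delta^2)$; then split the sum according to whether $2^{j(i)}\lessgtr \sigma^2/\delta^2$ and sum the geometric series in each regime. The rest is routine. I would also need the elementary fact that for $0<\phi<\pi/2$ the coefficients $\cos\phi$ and $\cos(4\cdot)/ \cos(2\cdot)$ appearing in $\hat\Delta_k$ and $\Delta_k$ are bounded (uniformly over the relevant range $\phi\le \tfrac{2\pi}{n}\lfloor n/16\rfloor<\pi/8$), so that $\|h_{K^*}-h_P\|_\infty\le\delta$ really does translate into $|\Delta_k(\theta_i)-\Delta_k^{(P)}(\theta_i)|\le C\delta$; this is immediate from the definitions of $l,u,L_k,U_k$.
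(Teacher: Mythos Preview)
Your proposal is correct and follows essentially the same route as the paper. The paper packages the perturbation step as a separate lemma (Lemma \ref{appx}): it shows that for \emph{any} compact convex $P$ one has $\E_{K^*}(\hat h_i-h_{K^*}(\theta_i))^2\le C\bigl(\sigma^2/(k_*^P(i)+1)+\ell_H^2(K^*,P)\bigr)$, using exactly your observation $|\Delta_k-\Delta_k^P|\le 2\ell_H(K^*,P)$, and then bounds $\sum_i 1/(k_*^P(i)+1)$ purely from the polytope structure of $P$ via Corollary \ref{theorem::individual}, obtaining the harmonic-sum $\log(en/v_P)$ term. Your version merges these two steps into the single statement $k_*(i)+1\gtrsim\min(2^{j(i)},\sigma^2/\delta^2)$ and sums; this is equivalent, and your anticipated ``main obstacle'' (that the $\delta$-perturbation must not destroy the lower bound $k_*(i)\gtrsim 2^{j(i)}$) is precisely what the paper's Lemma \ref{appx} handles. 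The paper's modular split is slightly cleaner in that the approximation lemma holds for arbitrary $P$ and the polytope structure enters only in the final summation, but the underlying ideas and ingredients are the same.
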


\begin{remark}[Near-parametric rates for polytopes]{\rm
  The bound \eqref{GloGen} implies that $\hat{h}$ has the parametric
  rate (upto a logarithmic factor of $n$)  for estimating
  polytopes. Indeed, suppose that $K^*$ is a polytope with $v$
  vertices. Then using $P = K^*$ in the infimum in \eqref{GloGen}, we
  have the risk bound 
  \begin{equation}\label{janak}
    \E_{K^*} L_f(K^*, \hat{K}) \leq \frac{C \sigma^2 v}{n} \log
    \left(\frac{en}{v} \right). 
  \end{equation}
  This is the parametric rate $\sigma^2 v/n$ up to logarithmic
  factors and is smaller than the nonparametric rate $n^{-4/5}$ given
  in \eqref{nt.eq}. 
}
\end{remark}

\begin{remark}{\rm
When $v = 1$, inequality \eqref{janak} has a redundant logarithmic
factor. Indeed, when $v = 1$, we can use \eqref{nt.eq}   with $R = 0$
which gives \eqref{janak} without the additional logarithmic
factor. We do not know if the logarithmic factor in \eqref{janak} can
be removed for values of $v$ larger than one as well. 
}
\end{remark}

We now turn to our second set estimator $\hat{K}'$. For this
estimator, the next theorem provides an upper bound on its accuracy
under the integral loss function $L$ (defined in
\eqref{id}). Qualitatively, the bounds on $\E_{K^*} L(K^*, \hat{K}')$
given in the next theorem are similar to the bounds on $\E_{K^*}
L_f(K^*, \hat{K})$ proved in Theorems~\ref{nt} and
\ref{theorem::General}. 
\begin{theorem}\label{nti}
  Suppose $K^*$ is contained in some closed ball of radius $R \geq
  0$. The risk $\E_{K^*} L(K^*, \hat{K}')$ satisfies both the
  following inequalities: 
  \begin{equation}\label{nti.1}
    \E_{K^*} L(K^*, \hat{K}') \leq C \left\{\frac{\sigma^2}{n} +
      \left(\frac{\sigma^2 \sqrt{R}}{n} \right)^{4/5} +
      \frac{R^2}{n^2} \right\} 
  \end{equation}
  and 
  \begin{equation}\label{nti.2}
    \E_{K^*} L(K^*, \hat{K}') \leq C \inf_{P \in \Ps}
    \left[\frac{\sigma^2 v_P}{n} \log \left(\frac{en}{v_P} \right) +
      \ell_H^2(K^*, P)  + \frac{R^2}{n^2}\right]. 
  \end{equation}
\end{theorem}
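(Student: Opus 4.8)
The plan is to control $\hat K'$ through the interpolant $\hat h'$, and then to split the resulting error into a stochastic part (which is handled exactly as for $\hat K$ in Theorems~\ref{nt} and~\ref{theorem::General}) and a deterministic interpolation part. Since $K^*$ is compact and convex, $h_{K^*}$ is a genuine support function, hence a feasible competitor in the optimization~\eqref{khpdef}; as the cone of support functions is a closed convex subset of $L^2(-\pi,\pi)$ (the fact underlying existence and uniqueness of $\hat K'$), the standard convex projection inequality gives $\|h_{\hat K'}-h_{K^*}\|_{L^2}\le\|\hat h'-h_{K^*}\|_{L^2}$, so that
\begin{equation*}
\E_{K^*}L(K^*,\hat K')\ \le\ \E_{K^*}\int_{-\pi}^{\pi}\bigl(\hat h'(\theta)-h_{K^*}(\theta)\bigr)^2\,d\theta .
\end{equation*}
(Even without the contraction property, feasibility of $h_{K^*}$ and the triangle inequality give the same bound up to a harmless factor $4$.)

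Next I would introduce the deterministic interpolant $\tilde h$ defined as in~\eqref{hind} but with each $\hat h_i$ replaced by the true value $h_{K^*}(\theta_i)$, and write $\hat h'-h_{K^*}=(\hat h'-\tilde h)+(\tilde h-h_{K^*})$. On each arc $[\theta_i,\theta_{i+1}]$ the function $\hat h'-\tilde h$ is the same sine-interpolation of the two errors $\hat h_i-h_{K^*}(\theta_i)$ and $\hat h_{i+1}-h_{K^*}(\theta_{i+1})$, with both interpolation weights lying in $[0,1]$; squaring, integrating over the arc (of length $2\pi/n$) and summing cyclically over $i$ gives $\int_{-\pi}^{\pi}(\hat h'-\tilde h)^2\,d\theta\le (C/n)\sum_{i=1}^n(\hat h_i-h_{K^*}(\theta_i))^2$. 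Taking expectations and invoking Theorem~\ref{rbe} term by term,
\begin{equation*}
\E_{K^*}\int_{-\pi}^{\pi}\bigl(\hat h'(\theta)-\tilde h(\theta)\bigr)^2\,d\theta\ \le\ \frac{C\sigma^2}{n}\sum_{i=1}^n\frac{1}{k_*(i)+1}.
\end{equation*}

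For the deterministic term, applying circle-convexity~\eqref{genvit} with $(\alpha_1,\alpha,\alpha_2)=(\theta_{i+1},\theta,\theta_i)$ and rearranging shows $\tilde h(\theta)\ge h_{K^*}(\theta)$ for every $\theta$. For the matching upper bound, write $K^*$ inside a closed ball of radius $R$ centred at some point $x_0$ and decompose $h_{K^*}(\theta)=\langle x_0,(\cos\theta,\sin\theta)\rangle+g(\theta)$ with $g=h_{K^*-x_0}$; since the sine-interpolation weights reproduce functions of the form $\theta\mapsto a\cos\theta+b\sin\theta$ exactly, $\tilde h-h_{K^*}$ equals the interpolation error of $g$, and $g$ is $R$-Lipschitz with $|g|\le R$. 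Expanding $\tilde g(\theta)-g(\theta)$ as a combination of the increments $g(\theta_i)-g(\theta)$, $g(\theta_{i+1})-g(\theta)$ together with the $O(n^{-2})$ defect of the weights from summing to one yields $0\le\tilde h(\theta)-h_{K^*}(\theta)\le CR/n$ on each arc, hence $\int_{-\pi}^{\pi}(\tilde h-h_{K^*})^2\,d\theta\le CR^2/n^2$. Combining the three displays,
\begin{equation*}
\E_{K^*}L(K^*,\hat K')\ \le\ \frac{C\sigma^2}{n}\sum_{i=1}^n\frac{1}{k_*(i)+1}+\frac{CR^2}{n^2}.
\end{equation*}

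It remains to bound $\frac{\sigma^2}{n}\sum_i\frac{1}{k_*(i)+1}$ in two ways. Under the radius-$R$ assumption this sum is at most $C\{\sigma^2/n+(\sigma^2\sqrt R/n)^{4/5}\}$, which is exactly the inequality established in the proof of Theorem~\ref{nt}; this gives~\eqref{nti.1}. Running the argument behind Theorem~\ref{theorem::General} gives $\frac{\sigma^2}{n}\sum_i\frac{1}{k_*(i)+1}\le C\inf_{P\in\Ps}[\frac{\sigma^2 v_P}{n}\log\frac{en}{v_P}+\ell_H^2(K^*,P)]$, which gives~\eqref{nti.2}. I expect the deterministic interpolation estimate to be the only genuinely new obstacle: the one-sided comparison $\tilde h\ge h_{K^*}$ is immediate from~\eqref{genvit}, but the matching $O(R/n)$ upper bound needs care because the sine-interpolation weights, although they reproduce affine supports exactly, do not sum to one, and it is precisely this estimate that produces the extra $R^2/n^2$ terms distinguishing~\eqref{nti.1}--\eqref{nti.2} from~\eqref{nt.eq} and~\eqref{GloGen}. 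Everything else either repeats the $\hat K$ analysis essentially verbatim or is a routine convex-projection argument.
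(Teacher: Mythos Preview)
Your proposal is correct and essentially matches the paper's proof. The paper also starts from the projection inequality $\E_{K^*}L(K^*,\hat K')\le\E_{K^*}\int(\hat h'-h_{K^*})^2$, splits off the deterministic bias $b(\theta)=h_{K^*}(\theta)-\tilde h(\theta)$ (your $\tilde h-h_{K^*}$ with the sign reversed), proves $|b(\theta)|\le CR/n$ via the same translation-to-the-origin trick exploiting that the sine weights reproduce $a\cos\theta+b\sin\theta$ exactly, and then defers the stochastic sum $\frac{1}{n}\sum_i\E_{K^*}(\hat h_i-h_{K^*}(\theta_i))^2$ to the arguments already carried out for Theorems~\ref{nt} and~\ref{theorem::General}. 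The only cosmetic difference is that the paper splits $\hat h'-h_{K^*}$ directly into three pieces via $(a+b+c)^2\le 3(a^2+b^2+c^2)$ rather than introducing $\tilde h$ explicitly, and it does not record the one-sided inequality $\tilde h\ge h_{K^*}$ (it simply bounds $|b(\theta)|$); neither point affects the argument.
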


The only difference between the inequalities \eqref{nti.1} and \eqref{nti.2} on one hand and \eqref{nt.eq} and \eqref{GloGen} on the other is the presence of the $R^2/n^2$ term. This term is usually very small and does not change the qualitative behavior of the bounds. However note that inequality \eqref{GloGen} did not require any assumption on $K^*$ being in a ball of radius $R$ while this assumption is necessary for \eqref{nti.2}.

\begin{remark}{\rm
The rate $(\sigma^2 \sqrt{R}/n)^{4/5}$ is the minimax rate for this problem under the loss function $L$. Although this has not been proved explicitly anywhere, it can be shown by modifying the proof of \citet[Theorem 3.2]{G11} appropriately. Theorem \ref{nti} therefore shows that $\hat{K}'$ is a minimax optimal estimator of $K^*$ under the loss function $L$. 
}
\end{remark}


\section{Examples}
\label{egs}

We now investigate the conclusions of the theorems of the previous
section for specific choices of $K^*$. For calculations in the
following examples, it will be useful here to note that the quantity 
\mbox{$\Delta_k(\theta_i) = U_k(\theta_i) - L_k(\theta_i)$} has the following 
alternative expression: 
\begin{equation}\label{alex}
  \frac{1}{k+1} \sum_{j=0}^k
  \left(\frac{h_{K^*}(\theta_i+ 4j\pi/n) + h_{K^*}(\theta_i -4j
      \pi/n)}{2} - \frac{\cos(4j\pi/n)}{\cos(2j\pi/n)}
    \frac{h_{K^*}(\theta_i+2j\pi/n) + h_{K^*}(\theta_i -2j\pi/n)}{2}
  \right).  
\end{equation}

\begin{example}[Single point]\label{sp}{\rm
  Suppose $K^* := \left\{(x_1, x_2) \right\}$ for a fixed point $(x_1, 
  x_2) \in \R^2$. In this case 
  \begin{equation}\label{sinsp}
    h_{K^*}(\theta) = x_1 \cos \theta + x_2 \sin \theta \qt{for all
      $\theta$}. 
  \end{equation} 
  It can then be directly checked from \eqref{alex} that
  $\Delta_k(\theta_i) = 0$ for every $k \in \I$ and $i \in \{1, \dots,
  n\}$. As a result, it   follows that $k_*(i) = \max_{k \in \I} k
  \geq cn$ for a positive constant $c$. 

  Theorem \ref{rbe} then says that the point estimator $\hat{h}_i$
  satisfies 
  \begin{equation}\label{spee}
    \E_{K^*} \left( \hat{h}_i - h_{K^*}(\theta_i) \right)^2 \leq  
    \frac{C\sigma^2}{n} 
  \end{equation}
  for a universal positive constant $C$. One therefore gets the
  parametric rate here. 

  Also, Theorem \ref{nt} and inequality \eqref{nti.1} in Theorem
  \ref{nti} can both be used here with $R = 0$. This implies that the
  set estimators $\hat{K}$ and $\hat{K}'$ both converge to $K^*$ at
  the parametric rate under the loss functions $L_f$ and $L$
  respectively. 
}
\end{example}

\begin{example}[Ball]\label{balle}{\rm
  Suppose $K^*$ is a ball centered at $(x_1, x_2)$ with radius $R >
  0$. It is then easy to verify that
  \begin{equation}\label{ballsp}
    h_{K^*}(\theta) = x_1 \cos \theta + x_2 \sin \theta + R \qt{for
      all $\theta$}. 
  \end{equation}
As a result, for every $k \in \I$ and $i \in \{1, \dots,
  n\}$, we have
  \begin{equation}\label{bca}
    \Delta_k(\theta_i) = \frac{R}{k+1} \sum_{j=0}^{k} \left(1 -
      \frac{\cos \frac{4\pi j}{n}}{\cos \frac{2\pi j}{n}} \right) \leq
    R \left(1 - \frac{\cos 4\pi k/n}{\cos  2\pi k/n} \right) = \frac{R(1
      + 2\cos 2\pi k/n)}{\cos 2\pi k/n} \left(1 - \cos 2\pi k/n
    \right).    
  \end{equation}
  Because $k \leq n/16$ for all $k \in \I$, it is easy to verify that
  $\Delta_k(\theta_i) \leq 8 R \sin^2 (\pi k/n) \leq 8 R \pi^2
  k^2/n^2$. Taking $f_k(\theta_i) = 8 R \pi^2k^2/n^2$ in Corollary
  \ref{arbe}, we obtain that $k_*(i) \geq c (n \sigma^2/R)^{2/5}$ for
  a constant $c$. Also since the function 
  $1-\cos(2x)/\cos(x)$ is  a strongly convex function on
  $[-\pi/4,\pi/4]$ with second derivative  lower bounded by 3, we have
  \begin{equation*}
    \Delta_k(\theta_i) = \frac{R}{k+1} \sum_{j=0}^{k} \left(1 -
      \frac{\cos \frac{4\pi j}{n}}{\cos \frac{2\pi j}{n}} \right) 
      \geq \frac{R}{k+1} \sum_{j=0}^{k}\frac{3}{2}\left(\frac{2\pi j}{n}
      \right)^2 = \frac{R\pi^2 k(2k+1)}{n^2}.    
  \end{equation*}
This gives $k_*(i) \leq C (n \sigma^2/R)^{2/5}$ as well for a constant $C$. We thus have $k_*(i) \asymp (n \sigma^2/R)^{2/5}$ for every $i$. Theorem \ref{rbe} then gives 
  \begin{equation}\label{upco}
    \E_{K^*} \left(\hat{h}_i - h_{K^*}(\theta_i) \right)^2 \leq C
    \left(\frac{\sigma^2\sqrt{R}}{n} \right)^{4/5} \qt{for every $i
      \in \{1, \dots, n\}$}.  
  \end{equation}
Theorem \ref{nt} and inequality \eqref{nti.1} prove that the set estimators $\hat{K}$ and $\hat{K}'$ also converge to $K^*$ at the $n^{-4/5}$ rate. 
}
\end{example}

In the preceding examples, we saw that the optimal rate
$\sigma^2/(k_*(i) + 1)$ for estimating $h_{K^*}(\theta_i)$ did not
depend on $i$. Next, we consider \textit{asymmetric} examples where
the rate changes with $i$.  

\begin{example}[Segment]\label{segm}{\rm
  Suppose $K^*$ is the vertical line segment joining the two points
  $(0, R)$ and $(0, -R)$ for a fixed $R > 0$. One then gets
  $h_{K^*}(\theta) = R |\sin \theta|$ for all $\theta$. For
  simplicity, assume that $n$ is even and consider $i = n/2$ so that
  $\theta_{n/2} = 0$. It can then be verified that 
  \begin{equation*}
    \Delta_k(\theta_{n/2}) = \Delta_k(0) = \frac{R}{k+1}
    \sum_{j=0}^{k} \tan \frac{2 \pi j}{n} \qt{for every $k \in \I$}. 
  \end{equation*}
  Because $j \mapsto \tan (2 \pi j/n)$ is increasing, we get 
  \begin{equation*}
  \frac{3\pi Rk}{8n} \leq \frac{R}{k+1} (\frac{3k}{4}+1)\tan (2 \pi k/4n) 
  \leq \Delta_k(0) \leq R \tan (2 \pi k/n) \leq 2 R \sin (2 \pi k /n)
    \leq \frac{4 \pi R k}{n}. 
  \end{equation*}
  Corollary \ref{arbe} then gives 
  \begin{equation}\label{segm.eq}
    \frac{\sigma^2}{k_*(n/2) + 1} \asymp \left(\frac{\sigma^2 R}{n}
    \right)^{2/3}. 
  \end{equation}
  It was shown in Corollary \ref{theorem::RadiusRBall} that the right
  hand side above represents the maximum possible value of
  $\sigma^2/(k_*(i) + 1)$ when $K^*$ lies in a closed ball of radius
  $R$. Therefore this example presents the situation where estimation
  of $h_{K^*}(\theta_i)$ is the most difficult. See Remark \ref{remco}
  for the connection to smoothness of $h_{K^*}(\cdot)$ at $\theta_i$. 

  Now suppose that $i = 3n/4$ (assume that $n/4$ is an integer for
  simplicity) so that $\theta_i = \pi/2$. Observe then that
  $h_{K^*}(\theta) = R \sin \theta$ (without the modulus) for $\theta
  = \theta_i \pm 4 j \pi /n$ for every $0 \leq j \leq k, k \in
  \I$. Using \eqref{alex}, we have $\Delta_k(\theta_i) = 0$ for every $k
  \in \I$. This immediately gives $k_*(i) = \lfloor n/16
  \rfloor$ and hence 
  \begin{equation}\label{sn}
    \frac{\sigma^2}{k_*(3n/4) + 1} \asymp \frac{\sigma^2}{n}. 
  \end{equation} 
  In this example, the risk for estimating $h_{K^*}(\theta_i)$ changes
  with $i$. For $i = n/2$, we get the $n^{-2/3}$ rate 
  while for $i = 3n/4$, we get the parametric rate. For other values
  of $i$, one gets a range of rates between $n^{-2/3}$ and $n^{-1}$. 

  Because $K^*$ is a polytope with 2 vertices, Theorem
  \ref{theorem::General} and inequality \eqref{nti.2} imply that the
  set estimators $\hat{K}$ and $\hat{K}'$ converge at the near
  parametric rate $\sigma^2\log n/n$. It is interesting to note
  here that even though for some $\theta_i$, the optimal rate of
  estimation of $h_{K^*}(\theta_i)$ is $n^{-2/3}$, the entire set can
  be estimated at the near parametric rate. 
}
\end{example}

\begin{example}[Half-ball]{\rm
  Suppose $K^* := \{(x_1, x_2) : x_1^2 + x_2^2 \leq 1, x_2 \leq
  0\}$. One then has $h_K(\theta) = 1$ for $-\pi \leq \theta \leq 0$
  and $h_K(\theta) = |\cos \theta|$ for $0 < \theta \leq \pi$. Assume
  $n$ is even and take $i = n/2$ so that $\theta_i = 0$. Then
\begin{equation*}
  \Delta_k(0) = \frac{1}{k+1} \sum_{j=0}^{k}
  \left(\frac{\cos 4\pi j/n + 1}{2} - \frac{\cos 4\pi j/n}{\cos 2\pi
      j/n} \frac{\cos 2\pi j/n + 1}{2} \right) = \frac{1}{2(k+1)} 
  \sum_{j=0}^{k} \left(1 - \frac{\cos 4\pi j/n}{\cos 2\pi j/n} \right). 
\end{equation*}
This is exactly as in \eqref{bca} with $R = 1$ and an additional factor
of $1/2$. Arguing as in Example \ref{balle}, we obtain that 
\begin{equation*}
  \frac{\sigma^2}{k_*(n/2) + 1} \asymp \left(\frac{\sigma^2}{n}
  \right)^{4/5}.  
\end{equation*}
Now take $i = 3n/4$ (assume $n/4$ is an integer) so that $\theta_i
= \pi/2$. Observe then that $h_{K^*}(\theta) = |\cos \theta|$ for
$\theta = \theta_i \pm 4 j \pi/n$ for every $0 \leq j \leq k, k \in
\I$. The situation is therefore similar to \eqref{segm.eq} and we
obtain 
\begin{equation*}
  \frac{\sigma^2}{k_*(3n/4) + 1} \asymp \left(\frac{\sigma^2}{n}
  \right)^{2/3}. 
\end{equation*}
Similar to the previous example, the risk for estimating
$h_{K^*}(\theta_i)$ changes with $i$ and varies 
from $n^{-2/3}$ to $n^{-4/5}$. On the other hand, Theorem \ref{nt}
states that the set estimator $\hat{K}$ still estimates $K^*$ at the
rate $n^{-4/5}$.   
}
\end{example}

\begin{remark}[Connection between risk and smoothness]\label{remco}{\rm
The reader may observe that the support functions \eqref{sinsp} and
\eqref{ballsp} in the two examples above differ only by the constant
$R$. It might then seem strange that only the addition of a non-zero
constant changes the risk of estimating $h_{K^*}(\theta_i)$ from
$n^{-1}$ to $n^{-4/5}$. It turns out that the function \eqref{sinsp}
is much more smoother than the function \eqref{ballsp}; the right way
to view smoothness of $h_{K^*}(\cdot)$ is to regard it as a function on
$\R^2$. This is done in the following way. Define, for each $z = (z_1,
z_2) \in \R^2$,  
\begin{equation*}
  h_{K^*}(z) = \max_{(x_1, x_2) \in K^*} \left(x_1 z_1 + x_2 z_2
  \right). 
\end{equation*}
When $z = (\cos \theta, \sin \theta)$ for some $\theta \in \R$, this
definition coincides with our definition of $h_{K^*}(\theta)$. A
standard result (see for example Corollary 1.7.3 and Theorem 1.7.4 in
\cite{Schneider}) states that the subdifferential of $z
\mapsto h_{K^*}(z)$ exists at every $z = (z_1, z_2) \in \R^2$ and is
given by  
\begin{equation*}
  F(K^*, z) := \left\{(x_1, x_2) \in K^*: h_{K^*}(z) = x_1 z_1 + x_2
    z_2 \right\}. 
\end{equation*}
In particular, $z \mapsto h_{K^*}(z)$ is differentiable at $z$ if and
only if $F(K^*, z)$ is a singleton. 

This point of view of studying $h_{K^*}$ as a function on $\R^2$ sheds
qualitative light on the risk bounds obtained in the examples. In the
case of Example \ref{sp} when $K^* = \{(x_1, x_2)\}$, it is 
clear that $F(K^*, z) = \{(x_1, x_2)\}$ for all $z$. Because this set
does not change with $z$, this provides the case
of maximum smoothness (because the derivative is constant) and thus we
get the $n^{-1}$ rate. 

In Example \ref{balle} when $K^*$ is a ball centered at $x = (x_1,
x_2)$ with radius $R$, it can be checked that $F(K^*, z) = \{x + R
z/\|z\|\}$ for every $z \neq 0$. Since $F(K^*, z)$ is a singleton for
each $z \neq 0$, it follows that $z \mapsto h_{K^*}(z)$ is
differentiable for every $z$. For $R \neq 0$, the set $F(K^*, z)$
changes with $z$ and thus here $h_{K^*}$ is not as smooth as in
Example \ref{sp}. This explains the slower rate in Example \ref{balle}
compared to \ref{sp}. 

Finally in Example \ref{segm}, when $K^*$ is the vertical segment
joining $(0, R)$ and $(0, -R)$, it is easy to see that $F(K^*, z) =
K^*$ when $z = (1, 0)$. Here $F(K^*, z)$ is not a singleton which
implies that $h_{K^*}(z)$ is non-differentiable at $z = (1, 0)$. This
is why one gets the slow rate $n^{-2/3}$ for estimating
$h_{K^*}(\theta_{n/2})$ in Example \ref{segm}. 
}
\end{remark}



\section{Discussions}
\label{discussion.sec}

In this paper we study the problems of estimating both the support function at a point, $h_{K^*}(\theta_i)$, and the convex set $K^*$. Data-driven adaptive estimators are constructed and their optimality is established.
For pointwise estimation, the quantity $k_*(i)$, which appears in both the upper bound \eqref{rbe.eq} and the lower bound \eqref{lobo.eq},  is related to the smoothness of $h_{K^*}(\theta)$ at $\theta = \theta_i$. The construction of $\hat{h}_i$ is based on local smoothing together with an optimization scheme for choosing the bandwidth. Smoothing methods for estimating the support function have previously been studied by~\cite{FisherHallTurlachWatson}. Specifically, working under certain smoothness assumptions on the true support function $h_{K^*}(\theta)$, \citet{FisherHallTurlachWatson} estimated it using periodic versions of standard nonparametric regression techniques such
as local regression, kernel smoothing and splines. They evade the problem of bandwidth selection however by assuming that the
true support function is sufficiently smooth. Our estimator comes with a scheme for choosing the bandwidth automatically from the data and hence we do not need any smoothness assumptions on the true convex set. 

To avoid complications, we have assumed throughout the paper that the noise level $\sigma$ is known. In practice, $\sigma$ is typically unknown and needs to be estimated. Under the setting of the present paper, $\sigma$ is easily estimable by using the median of the consecutive differences. Let $\delta_i=Y_{2i} - Y_{2i-1}, \; i = 1, \dots, \lfloor {n\over 2}\rfloor$. A simple robust estimator of the noise level $\sigma$ is the following median absolute deviation (MAD) estimator:
\[
\hat \sigma = \frac{{\rm median} |\delta_i - {\rm median}(\delta_i) |}{1.349}.
\]

It was noted that the construction of our estimators $\hat{K}$ and $\hat{K}'$ given in Section \ref{setes} does not involve any special treatment for polytopes; yet we obtain faster rates for polytopes.
Such automatic adaptation to polytopes has been observed in other contexts: isotonic regression where one gets automatic adaptation for piecewise constant monotone functions (see \citet{GuntuAnnIso}) and convex regression where one gets automatic adaptation for piecewise affine convex functions (see \citet{GSvex}).    
 
Finally, we note that because $\sigma^2/(k_*(i) +1)$ gives the optimal
rate in pointwise estimation, it can potentially be used as a
benchmark to evaluate other estimators for $h_{K^*}(\theta_i)$ such as
the least squares estimator $h_{\hat{K}_{\text{ls}}}(\theta_i)$. This
however is beyond the scope of the current paper.


\section{Proofs of the main results}
\label{pmr}

We prove the main results in this section. Additional technical
results and proofs are given in Appendix \ref{apap}.  
\subsection{Proof of Theorem \ref{rbe}}
\label{pf.rbe}

We provide the proof of Theorem \ref{rbe} here. The proof uses  three simple lemmas: Lemma \ref{keydrop}, \ref{delkstar} and \ref{varcal} which are stated and proved in Appendix \ref{apap}.

Fix $i = 1, \dots, n$. Because $\hat{h}_i = \hat{U}_{\hat{k}(i)}(\theta_i)$, we write
  \begin{equation*}
\left(\hat{h}_i - h_{K^*}(\theta_i) \right)^2 = \sum_{k \in \I}
\left(\hat{U}_{k}(\theta_i) - h_{K^*}(\theta_i) \right)^2
I\left\{\hat{k}(i) = k \right\}
  \end{equation*}
where $I(\cdot)$ denotes the indicator function. Taking expectations
on both sides and using Cauchy-Schwartz inequality, we obtain
\begin{equation*}
  \E_{K^*} \left(\hat{h}_i - h_{K^*}(\theta_i) \right)^2 \leq \sum_{k \in \I}
  \sqrt{\E (\hat{U}_{k}(\theta_i) - h_{K^*}(\theta_i))^4} \sqrt{\P_{K^*}
    \left\{\hat{k}(i) = k \right\}}. 
\end{equation*}
The random variable $\hat{U}_k - h_{K^*}(0)$ is normally distributed and
we know that $\E Z^4 \leq 3 (\E Z^2)^2$ for every gaussian random
variable $Z$. We therefore have
\begin{equation*}
\E_{K^*} \left(\hat{h}_i - h_{K^*}(\theta_i) \right)^2  \leq \sqrt{3}
\sum_{k \in \I}  \E (\hat{U}_{k}(\theta_i) - h_{K^*}(\theta_i))^2 \sqrt{\P_{K^*}
  \left\{\hat{k}(i) = k \right\}}.   
\end{equation*}
Because $\E_{K^*} \hat{U}_k(\theta_i) = U_k(\theta_i)$ (defined in
\eqref{kiest}), we have  
\begin{equation*}
  \E_{K^*} (\hat{U}_{k}(\theta_i) - h_{K^*}(\theta_i))^2 = (U_k(\theta_i) -
  h_{K^*}(\theta_i))^2 + \text{var}(\hat{U}_k(\theta_i)) .   
\end{equation*}
Because $L_k(\theta_i) \leq h_{K^*}(\theta_i) \leq U_k(\theta_i)$, it
is clear that $U_k(\theta_i) - h_{K^*}(\theta_i) \leq U_k(\theta) -
L_k(\theta_i) = \Delta_k(\theta_i)$. Also, Lemma \ref{varcal} states
that the variance of $\hat{U}_k$ is at most $\sigma^2/(k+1)$. Putting
these together, we obtain 
\begin{equation*}
    \E_{K^*} \left(\hat{h}_{i} - h_{K^*}(\theta_i) \right)^2 \leq
    \sqrt{3} \sum_{k \in \I} \left(\Delta_k^2(\theta_i) +
      \frac{\sigma^2}{k+1} \right) \sqrt{\P_{K^*}\left\{\hat{k}(i) = k
      \right\}}.  
\end{equation*} 
The proof of \eqref{rbe.eq} will therefore be complete if we show that 
\begin{equation}\label{pts}
\sum_{k \in \I} \left(\Delta_k^2(\theta_i) + \frac{\sigma^2}{k+1}
\right) \sqrt{\P_{K^*} \left\{\hat{k}(i) = k \right\}} \leq C
    \frac{\sigma^2}{k_*(i)+1}
\end{equation} 
for a universal positive constant $C$. 

Below, we write $\Delta_k, \hat{k}$ and $k_*$ for $\Delta_k(\theta_i),
\hat{k}(i)$ and $k_*(i)$ respectively for ease of notation. We also
write $\P$ for $\P_{K^*}$. 

We prove \eqref{pts} by considering the two cases: $k \leq k_*, k \in
\I$ and $k > k_*, k \in \I$ separately.     

The first case is $k \leq k_*, k \in \I$. By Lemma \ref{keydrop}  and
\eqref{delkstar.eq}, we get  
\begin{equation*}
  \Delta_k \leq \Delta_{k_*} \leq  \frac{6 (\sqrt{2} - 1)\sigma}{\sqrt{k_*+1}} 
  \leq \frac{6(\sqrt{2} - 1)\sigma}{\sqrt{k+1}}
\end{equation*}
and consequently 
\begin{equation}\label{b1}
  \Delta_k^2 + \frac{\sigma^2}{k+1} \leq \frac{\sigma^2}{k+1}
  \left(36(\sqrt{2} - 1)^2 + 1\right) \qt{for all $k \leq k_*, k \in
    \I$}. 
\end{equation}
We bound $\P \{ \hat{k} = k\}$ by writing 
\begin{equation*}
  \P \{ \hat{k} = k \} \leq \P \left\{ \left(\hat{\Delta}_k \right)^+
    + \frac{2\sigma }{\sqrt{k+1}} \leq \left(\hat{\Delta}_{k_*} \right)^+
    + \frac{2\sigma }{\sqrt{k_*+1}}  \right\} \leq \P \left\{
    \left(\hat{\Delta}_{k_*} \right)^+ \geq \frac{2\sigma}{\sqrt{k+1}} - 
    \frac{2\sigma }{\sqrt{k_*+1}}
  \right\}. 
\end{equation*}
Because $k \leq k_*$, the positive part above can be dropped and we obtain
\begin{equation*}
  \P \{ \hat{k} = k \} \leq \P \left\{
    \hat{\Delta}_{k_*} \geq \frac{2\sigma}{\sqrt{k+1}} - \frac{2\sigma}{\sqrt{k_*+1}}
  \right\}.  
\end{equation*}
Because $\hat{\Delta}_{k_*}$ is normally distributed with mean
$\Delta_{k_*}$, we have
\begin{equation*}
  \P \{ \hat{k} = k \} \leq \P \left\{Z \geq
    \frac{2\sigma (k+1)^{-1/2} - 2\sigma  (k_*+1)^{-1/2} -
      \Delta_{k_*}}{\sqrt{\text{var}(\hat{\Delta}_{k_*})}} \right\},
\end{equation*}
where $Z$ is a standard normal random variable. From
\eqref{delkstar.eq}, we have 
\begin{equation*}
  \frac{2\sigma}{\sqrt{k+1} } - \frac{2\sigma}{\sqrt{k_*+1}} -
      \Delta_{k_*} \geq \frac{2\sigma }{\sqrt{k+1}} \left(1 -
        \sqrt{\frac{k+1}{k_*+1}} \left(3\sqrt{2} - 2 \right) \right).
\end{equation*}
As a result, 
\begin{equation*}
  \P \{ \hat{k} = k \} \leq \P \left\{Z \geq \frac{2\sigma
      }{\sqrt{(k+1) \text{var}(\hat{\Delta}_{k_*})}} \left(1 -
      \sqrt{\frac{k+1}{k_*+1}} \left(3\sqrt{2} - 2 \right) \right) 
         \right\}. 
\end{equation*}
Suppose 
\begin{equation*}
  \tilde{k} := (k_*+1) \left(3\sqrt{2} - 2 \right)^{-2}-1. 
\end{equation*}
For $k < \tilde{k}$, we use the bound given by Lemma \ref{varcal} on
the variance of $\hat{\Delta}_{k^*}$ to obtain 
\begin{equation*}
  \P \{ \hat{k} = k \} \leq \P \left\{Z \geq
    2\left(\sqrt{\frac{k_*+1}{k+1}} - 3\sqrt{2}  + 2 \right) \right\}
  \leq \exp \left(- 2\left[\sqrt{\frac{k_*+1}{k+1}} - 3\sqrt{2} +
      2\right]^2 \right).  
\end{equation*} 
Using this and \eqref{b1}, we see that the quantity 
\begin{equation*}
\sum_{k < \tilde{k}, k \in \I} \left(\Delta_k^2 + \frac{\sigma^2}{k+1}
\right) \sqrt{\P \{\hat{k} = k \}}   
\end{equation*}
is bounded from above by 
\begin{equation*}
 \frac{\sigma^2}{k_* + 1} \left(36(\sqrt{2} - 1)^2 + 1 \right) 
\sum_{k < \tilde{k}, k \in \I}  \frac{k_*+1}{k+1} \exp \left(-
    \left[\sqrt{\frac{k_*+1}{k+1}}  - 3\sqrt{2}  + 2    \right]^2
  \right).  
\end{equation*}
Because $\I$ consists of integers of the form $2^j$, it follows that
for any two successive integers $k_1$ and $k_2$ in $\I$, we have $3/2
\leq (k_1+1)/(k_2 + 1) \leq 2$. Using this, it is easily seen that 
\begin{equation*}
  \sum_{k < \tilde{k}, k \in \I}    \frac{k_*+1}{k+1} \exp \left(-
   \left[\sqrt{\frac{k_*+1}{k+1}} - 3\sqrt{2} + 2 \right]^2  \right) 
\end{equation*}
is bounded from above by 
\begin{equation*}
  \sum_{j \geq 4}   2^j \exp \left(-\left[(3/2)^{j/2} - 3\sqrt{2} +
      2\right]^2  \right) + \sum_{0\leq j\leq 3} 2^j, 
\end{equation*}
which is just a universal positive constant. We have proved therefore
that 
\begin{equation}\label{sp11}
\sum_{k < \tilde{k}, k \in \I}\left(\Delta_k^2 + \frac{\sigma^2}{k+1}
\right) \sqrt{\P \{\hat{k} = k \}} \leq \frac{C_1 \sigma^2}{k_*+1},
\end{equation}
for a positive constant $C_1$.

For $\tilde{k} \leq k \leq k_*$, we simply use \eqref{b1} along with
the trivial bound $\P \{\hat{k} = k \} \leq 1$ to get
\begin{equation*}
\sum_{\tilde{k} \leq k \leq k_*, k \in \I} \left(\Delta_k^2 +
  \frac{\sigma^2}{k+1}  \right) \sqrt{\P \{\hat{k} = k \}}  \leq
\left(36(\sqrt{2} - 1)^2 + 1 \right) \frac{\sigma^2}{k_*+1}
\sum_{\tilde{k} \leq k < k_*, k \in \I} \frac{k_*+1}{k+1}.  
\end{equation*}
Once again because $\I$ consists of integers of the form $2^j$, we get 
\begin{equation*}
  \sum_{\tilde{k} \leq k \leq k_*, k \in \I} \frac{k_*+1}{k+1} \leq \sum_{j
    \geq 0} 2^j \left\{(3/2)^j \leq \left(3\sqrt{2} - 2\right)^2 \right\}.
\end{equation*}
The right hand side above is just a constant. It follows therefore that 
\begin{equation}\label{sp12}
\sum_{\tilde{k} \leq k \leq k_*, k \in \I}\left(\Delta_k^2 + \frac{\sigma^2}{k+1}
\right) \sqrt{\P \{\hat{k} = k \}} \leq \frac{C_2\sigma^2}{k_*+1},
\end{equation}
for a positive constant $C_2$. Combining~\eqref{sp11}
and~\eqref{sp12}, we deduce that 
\begin{equation}\label{sp1}
  \sum_{k \leq k_*, k \in \I}\left(\Delta_k^2 + \frac{\sigma^2}{k+1}
\right) \sqrt{\P \{\hat{k} = k \}} \leq \frac{C\sigma^2}{k_*+1}
\end{equation}
where $C := C_1 + C_2$ is a universal positive constant. 

We next deal with the case $k > k_*, k \in \I$. Assume that $\{k \in
\I: k > k_* \}$ is non-empty for otherwise there is nothing to
prove. By the first part of \eqref{an.delkstar.eq}, we get 
\begin{equation}\label{simplifySum}
  \sum_{k >  k_*, k \in \I} \left(\Delta_k^2 + \frac{\sigma^2}{k+1}
\right) \sqrt{\P \{\hat{k} = k \}} \leq \left(1 + \frac{1}{(\sqrt{6} - 2)^2}
 \right)   \sum_{k > k_*, k \in \I} \Delta_k^2 \sqrt{\P \{\hat{k} = k \}}. 
\end{equation}
We first bound $\P\{\hat{k} = k\}$ for $k > k_*, k \in \I$. We proceed by
writing 
\begin{align*}
  \P \{\hat{k} = k \} & \leq \P \left\{\hat{\Delta}_k^+ +
    \frac{ 2\sigma}{\sqrt{k+1}} \leq \hat{\Delta}_{k_*}^+ +
    \frac{ 2\sigma}{\sqrt{k_*+1}} \right\} \\
&\leq \P \left\{\hat{\Delta}_k +
    \frac{ 2\sigma}{\sqrt{k+1}} \leq \hat{\Delta}_{k_*}^+ +
    \frac{ 2\sigma}{\sqrt{k_*+1}} \right\}  \qt{(because $x \leq
    x^+$)} \\ 
&\leq \P \left\{\hat{\Delta}_k +
    \frac{ 2\sigma}{\sqrt{k+1}} \leq \hat{\Delta}_{k_*} + \frac{
      2\sigma}{\sqrt{k_*+1}} \right\} + \P_K \left\{\hat{\Delta}_k +
    \frac{ 2\sigma}{\sqrt{k+1}} \leq \frac{
      2\sigma}{\sqrt{k_*+1}} \right\} \\
&\leq \P \left\{\hat{\Delta}_k \leq \hat{\Delta}_{k_*} + \frac{
      2\sigma}{\sqrt{k_*+1}} \right\} + \P_K \left\{\hat{\Delta}_k \leq
    \frac{ 2\sigma}{\sqrt{k_*+1}} \right\} \\
&\leq \P \left\{\hat{\Delta}_{k_*} - \hat{\Delta}_k \geq - \frac{
    2\sigma}{\sqrt{k_*+1}}\right\} + \P \left\{- \hat{\Delta}_k 
\geq - \frac{ 2\sigma}{\sqrt{k_*+1}}\right\} 
\end{align*}
Both $\hat{\Delta}_{k_*} - \hat{\Delta}_k$ and $\hat{\Delta}_k$ are
normally distributed with means $\Delta_{k_*} - \Delta_k$ and
$\Delta_k$ respectively. As a result
\begin{equation*}
\P \{\hat{k} = k \} \leq \P \left\{Z \geq \frac{\Delta_k - \Delta_{k_*} - 
    2\sigma (k_*+1)^{-1/2}}{\sqrt{\text{var}(\hat{\Delta}_{k_*} -
      \hat{\Delta}_k)}} \right\} + \P \left\{Z \geq \frac{\Delta_k - 
    2\sigma (k_*+1)^{-1/2}}{\sqrt{\text{var}(\hat{\Delta}_k)}}
\right\}
\end{equation*}
where $Z$ is a standard normal random variable. Using
\eqref{delkstar.eq}, we obtain 
\begin{equation*}
\P \{\hat{k} = k \} \leq \P \left\{Z \geq \frac{\Delta_k - 2\sigma
    (k_*+1)^{-1/2} \left(3\sqrt{2} - 2\right)}{\sqrt{\text{var}(\hat{\Delta}_{k_*} - 
      \hat{\Delta}_k)}} \right\} + \P \left\{Z \geq \frac{\Delta_k - 
     2\sigma (k_*+1)^{-1/2}}{\sqrt{\text{var}(\hat{\Delta}_k)}}
\right\}.  
\end{equation*}
By the Cauchy-Schwarz inequality and Lemma~\ref{varcal}, we get, for
$k > k_*$, 
\begin{equation*}
  \sqrt{\text{var}(\hat{\Delta}_{k_*} - \hat{\Delta}_k)} \leq
  \sqrt{\text{var}(\hat{\Delta}_{k_*})} +
  \sqrt{\text{var}(\hat{\Delta}_k)} \leq \frac{\sigma}{\sqrt{k+1}} +
  \frac{\sigma}{\sqrt{k_*+1}} \leq \frac{2 \sigma}{\sqrt{k_*+1}} 
\end{equation*}
Also $\text{var}(\hat{\Delta}_k) \leq \sigma^2/(k+1) \leq
\sigma^2/(k_*+1)$. Therefore if $k > k_*, k \in \I$ is such that
\begin{equation}\label{nask}
  \Delta_k \geq 2\sigma (k_*+1)^{-1/2} \left(3\sqrt{2} - 2 \right),
\end{equation}
we obtain
\begin{align*}
  \P \{\hat{k} = k \} &\leq \P \left\{Z \geq \frac{\Delta_k - 2\sigma
    (k_*+1)^{-1/2} \left(3\sqrt{2} - 2 \right)}{\sigma \sqrt{2}
    (k_*+1)^{-1/2}} \right\} + \P \left\{Z \geq \frac{\Delta_k -   
     2\sigma (k_*+1)^{-1/2}}{\sigma (k_*+1)^{-1/2}} \right\} \\
&\leq  2 \P \left\{Z \geq \frac{\Delta_k - 2\sigma
    (k_*+1)^{-1/2} \left(3\sqrt{2} - 2 \right)}{\sigma \sqrt{2}
    (k_*+1)^{-1/2}} \right\}   \\
&\leq 2 \exp \left(-\frac{k_*+1}{2\sigma^2}
  \left(\Delta_k - 2\sigma (k_*+1)^{-1/2} (3\sqrt{2} - 2) \right)^2
\right). 
\end{align*}
Using the inequality $(x-y)^2 \geq x^2/2 - y^2$ with $x = \Delta_k$
and $y = 2\sigma (k_*+1)^{-1/2} (3\sqrt{2} - 2)$, we obtain
\begin{equation}\label{desh}
  \P \{\hat{k} = k \} \leq 2 \exp \left(2(3 \sqrt{2} - 2)^2 \right)
  \exp \left(- \frac{(k_*+1) \Delta_k^2}{4 \sigma^2}
  \right) 
\end{equation}
whenever $k \in I, k > k_*$ satisfies \eqref{nask}. It is easy
to see that when~\eqref{nask} is not satisfied, the right hand side
above is larger than 2. Thus, inequality~\eqref{desh} is true for all
$k \in \I, k > k_*$. As a result, 
\begin{equation}\label{kej}
  \Delta_k^2 \sqrt{\P \{\hat{k} = k \}} \leq \sqrt{2} \exp \left((3
    \sqrt{2} - 2)^2 \right) \xi \left( \Delta_k^2\right) \qt{for all
    $k \in \I, k > k_*$}. 
\end{equation}
where 
\begin{equation*}
  \xi(z) := z \exp \left(-\frac{(k_*+1) z}{8 \sigma^2} \right) \qt{for
  $z > 0$}. 
\end{equation*}
By \eqref{simplifySum} and \eqref{kej}, the proof would therefore be
complete if we show that $\sum_{k \in \I: k > k_*} \xi
\left(\Delta_k^2 \right)$ is bounded from above by a universal
positive constant. For this, note first that the function 
$\xi(z)$ is decreasing for $z \geq \breve{z} := 8 \sigma^2/(k_* + 1)$
and attains its maximum over $z > 0$ at $z = \breve{z}$. Note also the
second part of inequality \eqref{an.delkstar.eq} gives $\Delta_k^2 \geq z_k$ 
for all $k \in \I, k > k_*$ where 
\begin{equation*}
  z_k := \frac{(\sqrt{6} - 2)^2 \sigma^2 (k+1)}{4(k_* + 1)^2}
\end{equation*}
We therefore get 
\begin{align*}
  \xi \left(\Delta_k^2 \right) \leq \xi(\max(z_k, \breve{z})) &= \max(z_k,
  \breve{z}) \exp \left(\frac{-(k_*+1)  \max(z_k, \breve{z})}{8 \sigma^2} \right) \\
&\leq \max(z_k, \breve{z}) \exp \left(\frac{-(k_*+1) z_k }{8 \sigma^2}
\right) \leq (z_k + \breve{z}) \exp \left(\frac{-(k_*+1) z_k }{8
    \sigma^2} \right).   
\end{align*}
Because $k > k_*$, it is easy to see that 
\begin{equation*}
  \breve{z} = \frac{8 \sigma^2}{k_* + 1} \leq \frac{8 \sigma^2
    (k+1)}{(k_* + 1)^2}. 
\end{equation*}
We deduce that
\begin{equation*}
  \xi \left(\Delta_k^2 \right) \leq \left[\frac{(\sqrt{6} - 2)^2}{4} +
  8\right] \frac{\sigma^2 (k+1)}{(k_*+1)^2} \exp \left(-
  \frac{(\sqrt{6} - 2)^2}{32} \frac{k+1}{k_*+1} \right). 
\end{equation*}
Denoting the constants above by $c_1$ and $c_2$, we can write 
\begin{equation*}
  \sum_{k \in \I: k > k_*} \xi \left( \Delta_k^2 \right) \leq
  \frac{c_1 \sigma^2}{k_* + 1} \sum_{k \in \I: k > k_*} \frac{k+1}{k_*
  + 1} \exp \left(- \frac{k+1}{c_2(k_* + 1)} \right).  
\end{equation*}
The sum in the right hand side above is easily seen to be bounded from
above by 
\begin{equation*}
  \sum_{j \geq 0} 2^j \exp \left(-\frac{1}{c_2} \left(\frac{3}{2}
    \right)^j \right)
\end{equation*}
which is further bounded by a universal constant. This completes the
proof of Theorem \ref{rbe}. 

\subsection{Proof of Theorem \ref{lobo}}\label{pf.lobo}
This subsection is dedicated to the proof of Theorem \ref{lobo}. We
use Lemma \ref{useaux} which is stated and proved in Section
\ref{apap}. We also use a classical inequality due to
\citet{LeCam:86book} which states that for every estimator $\tilde{h}$
and compact, convex set $L^*$,      
\begin{equation}\label{lecam}
  \max \left[\E_{K^*} \left(\tilde{h} - h_{K^*}(\theta_i) \right)^2,
    \E_{L^*} \left(\tilde{h} - h_{L^*}(\theta_i) \right)^2 \right]
  \geq \frac{1}{4}  \left(h_{K^*}(\theta_i) - h_{L^*}(\theta_i)
  \right)^2 \left(1 - \|P_{K^*} - P_{L^*} \|_{TV} \right). 
\end{equation}
Here $P_{L^*}$ is the product of the Gaussian probability
measures with mean $h_{L^*}(\theta_i)$ and variance $\sigma^2$
for $i = 1, \dots, n$. Also $\|P-Q\|_{TV}$ denotes the total variation
distance between $P$ and $Q$. 

For ease of notation, we assume, without loss of generality, that
$\theta_i = 0$. We also write $\Delta_k$ for $\Delta_k(\theta_i)$ and
$k_*$ for $k_*(i)$. 

Suppose first that $K^*$ satisfies the following
condition: There exists some $\alpha \in (0, \pi/4)$ such that 
\begin{equation}\label{con1}
  \frac{h_{K^*}(\alpha) + h_{K^*}(-\alpha)}{2 \cos \alpha} -
  h_{K^*}(0) > \frac{\sigma}{\sqrt{n_{\alpha}}} 
\end{equation}
where $n_{\alpha}$ denotes the number of integers $i$ for which
$-\alpha < 2i \pi/n < \alpha$. This condition will not be satisfied,
for example, when $K^*$ is a singleton. We shall handle such $K^*$
later. Observe that $n_{\alpha} \geq 1$ for all $0 < \alpha < \pi/4$
because we can take $i = 0$. 

Let us define, for each $\alpha \in (0, \pi/4)$, 
\begin{equation*}
  a_K^*(\alpha) := \left(\frac{h_{K^*}(\alpha) + h_{K^*}(-\alpha)}{2\cos
      \alpha}, \frac{h_{K^*}(\alpha) - h_{K^*}(-\alpha)}{2 \sin
      \alpha} \right). 
\end{equation*}
and let $L^* = L^*(\alpha)$ be defined as the smallest
convex set that contains both $K^*$ and the point $a_{K^*}(\alpha)$. In
other words, $L^*$ is the convex hull of $K^* \cup
\{a_{K^*}(\alpha)\}$.  

We now use Le Cam's inequality \eqref{lecam}. To control the total
variation distance in the right hand side of \eqref{lecam}, we use
Pinsker's inequality: 
\begin{equation*}
  ||P_{K^*} - P_{L^*}||_{TV} \leq \sqrt{\frac{1}{2}
    D(P_{K^*} || P_{L^*})},
\end{equation*}
and the fact that (note that $\theta_i = 2 \pi i /n - \pi$)
\begin{equation*}
  D(P_{K^*} || P_{L^*}) = \frac{1}{2\sigma^2}
  \sum_{i=1}^n \left(h_{K^*}(2i\pi/n-\pi) - h_{L^*}(2i\pi/n-\pi)
  \right)^2. 
\end{equation*}
The support function of $L^*$ is easily seen to be
the maximum of the support functions of $K^*$ and the singleton
$\{a_{K^*}(\alpha)\}$. Therefore,  
\begin{align*}
  h_{L^*}(\theta) &:= \max \left(h_{K^*}(\theta),
    \frac{h_{K^*}(\alpha) + h_{K^*}(-\alpha)}{2 \cos \alpha} \cos \theta +
    \frac{h_{K^*}(\alpha) - h_{K^*}(-\alpha)}{2 \sin \alpha} \sin \theta
  \right) \\ 
&= \max \left(h_{K^*}(\theta), \frac{\sin(\theta + \alpha)}{\sin 2\alpha}
  h_{K^*}(\alpha) + \frac{\sin (\alpha - \theta)}{\sin 2 \alpha}
  h_{K^*}(-\alpha) \right). 
\end{align*}
Using~\eqref{genvit}, it can be shown that
\begin{equation}\label{pe1}
  h_{K^*}(\theta) \leq \frac{\sin(\theta + \alpha)}{\sin 2\alpha}
  h_{K^*}(\alpha) + \frac{\sin (\alpha - \theta)}{\sin 2 \alpha}
  h_{K^*}(-\alpha) \qt{for $-\alpha < \theta < \alpha$},
\end{equation}
and 
\begin{equation}\label{pe2}
  h_{K^*}(\theta) \geq \frac{\sin(\theta + \alpha)}{\sin 2\alpha}
  h_{K^*}(\alpha) + \frac{\sin (\alpha - \theta)}{\sin 2 \alpha}
  h_{K^*}(-\alpha) \qt{for $\theta \in [-\pi, -\alpha] \cup [\alpha,
    \pi]$}.  
\end{equation}
To see this, assume that $\theta > 0$ without loss of generality. We
then work with the two separate cases $\theta \in [0, \alpha]$ and
$\theta \in [\alpha, \pi]$. In the first case, apply~\eqref{genvit}
with $\alpha_1 = \alpha, \alpha = \theta$ and $\alpha_2 = -\alpha$ to
get~\eqref{pe1}. In the second case, apply~\eqref{genvit} with
$\alpha_1 = \theta, \alpha = \alpha$ and $\alpha_2 = -\alpha$ to
get~\eqref{pe2}.  

As a result of~\eqref{pe1} and~\eqref{pe2}, we get that
\begin{equation*}
  h_{L^*}(\theta) = \frac{\sin(\theta + \alpha)}{\sin 2\alpha}
  h_{K^*}(\alpha) + \frac{\sin (\alpha - \theta)}{\sin 2 \alpha}
  h_{K^*}(-\alpha) \qt{for $-\alpha < \theta < \alpha$},
\end{equation*}
and that $h_{L^*}(\theta)$ equals $h_{K^*}(\theta)$ for
every other $\theta$ in $(-\pi, \pi]$. 

We now give an upper bound on $h_{L^*}(\theta) -
h_{K^*}(\theta)$ for $0 \leq \theta < \alpha$. Using~\eqref{genvit} with
$\alpha_1 = \theta, \alpha = 0$ and $\alpha_2 = -\alpha$, we obtain 
\begin{equation*}
  h_{K^*}(\theta) \geq \frac{\sin (\alpha + \theta)}{\sin \alpha} h_{K^*}(0) -
  \frac{\sin \theta}{\sin \alpha} h_{K^*}(-\alpha). 
\end{equation*}
Thus for $0 \leq \theta < \alpha$, we obtain the inequality
\begin{align*}
 0 \leq h_{L^*}(\theta) - h_{K^*}(\theta) &= \frac{\sin (\theta +
    \alpha)}{\sin 2\alpha} h_{K^*}(\alpha) + \frac{\sin (\alpha -
    \theta)}{\sin 2\alpha} h_{K^*}(-\alpha) - h_{K^*}(\theta) \\
&\leq \frac{\sin (\theta + \alpha)}{\sin \alpha}
\left(\frac{h_{K^*}(\alpha) + h_{K^*}(-\alpha)}{2 \cos \alpha} -
  h_{K^*}(0) \right).  
\end{align*}
Because $0 < \alpha < \pi/4, 0 \leq \theta \leq \alpha$, we use the
fact that the sine function is increasing on $(0, \pi/2)$ to deduce
that
\begin{equation*}
   0 \leq h_{L^*}(\theta) - h_{K^*}(\theta) \leq
   \frac{h_{K^*}(\alpha) + h_{K^*}(-\alpha)}{2 \cos \alpha} -
   h_{K^*}(0) \qt{for all $0 \leq \theta < \alpha$}. 
\end{equation*}
One can similarly deduce the same inequality for the case $-\alpha <
\theta \leq 0$ as well. 

Because of this and the fact that $h_{L^*}(\theta)$ equals
$h_{K^*}(\theta)$ for all $\theta$ in $(-\pi, \pi]$ that are not in
the interval $(-\alpha, \alpha)$, we obtain 
\begin{align*}
  D(P_{K^*} || P_{L^*}) &= \frac{1}{2\sigma^2}
  \sum_{i=1}^n \left(h_{K^*}(2i\pi/n-\pi) - h_{L^*}(2i\pi/n-\pi)
  \right)^2 \\
&\leq \frac{n_{\alpha}}{2 \sigma^2}
  \left(\frac{h_{K^*}(\alpha) + h_{K^*}(-\alpha)}{2 \cos \alpha} -
    h_{K^*}(0) \right)^2. 
\end{align*}
Also because $h_{L^*}(0) = (h_{K^*}(\alpha) + h_{K^*}(-\alpha))/(2\cos
    \alpha)$, we obtain, by \eqref{lecam}, that 
\begin{equation}\label{lbmaineq}
  r \geq \frac{1}{4} \left(\frac{h_{K^*}(\alpha)
      + h_{K^*}(-\alpha)}{2 \cos \alpha} - h_{K^*}(0) \right)^2 \left(1 -
    \sqrt{\frac{n_{\alpha}}{4 \sigma^2}} \left(\frac{h_{K^*}(\alpha) +
        h_{K^*}(-\alpha)}{2 \cos \alpha} - h_{K^*}(0) \right) \right)
\end{equation}
for every $0 < \alpha < \pi/4$ where 
\begin{equation}\label{rde}
r := \inf_{\tilde{h}} \max \left[\E_{K^*} \left(\tilde{h} -
    h_{K^*}(\theta_i) \right)^2, \E_{L^*} \left(\tilde{h} -
    h_{L^*}(\theta_i) \right)^2 \right]
\end{equation}
where the infimum above is over all estimators $\tilde{h}$. Let us now
define $\alpha_*$ by  
\begin{equation*}
  \alpha_* := \inf \left\{0 < \alpha < \pi/4 : \frac{h_{K^*}(\alpha) +
      h_{K^*}(-\alpha)}{2 \cos \alpha} - h_{K^*}(0) >
    \frac{\sigma}{\sqrt{n_{\alpha}}} \right\}. 
\end{equation*}
Note first that $\alpha_* > 0$ because $n_{\alpha} \geq 1$ for all
$\alpha$ and thus for $\alpha$ very small while the quantity 
$(h_{K^*}(\alpha) + h_{K^*}(-\alpha))/(2\cos \alpha) - h_{K^*}(0)$
becomes close to 0 for small $\alpha$ (by continuity of
$h_{K^*}(\cdot)$). 

Also because we have assumed \eqref{con1}, it follows that $0 <
\alpha_* < \pi/4$. Now for each $\epsilon > 0$ sufficiently small, we
have   
\begin{equation*}
  \frac{h_{K^*}(\alpha_* - \epsilon) + h_{K^*}(-\alpha_* +
    \epsilon)}{2 \cos (\alpha_* - \epsilon)} - h_{K^*}(0) \leq 
  \frac{\sigma}{\sqrt{n_{\alpha_* - \epsilon}}}.  
\end{equation*}
Letting $\epsilon \downarrow 0$ in the above and using the fact that 
$n_{\alpha_* - \epsilon} \rightarrow n_{\alpha_*}$ and the continuity
of $h_{K^*}$, we deduce 
\begin{equation}\label{ddd}
 \frac{h_{K^*}(\alpha_*) + h_{K^*}(-\alpha_*)}{2 \cos \alpha_*} - h_{K^*}(0) \leq
    \frac{\sigma}{\sqrt{n_{\alpha_*}}}. 
\end{equation}
Because $0 < \alpha_* < \pi/4$, by the definition of the infimum,
there exists a decreasing sequence $\{\alpha_k\} \in (0, \pi/4 )$
converging to $\alpha_*$ such that 
\begin{equation*}
 \frac{h_{K^*}(\alpha_k) +
      h_{K^*}(-\alpha_k)}{2 \cos \alpha_k} - h_{K^*}(0) >
    \frac{\sigma}{\sqrt{n_{\alpha_k}}} \qt{for all $k$}. 
\end{equation*}
For $k$ large, $n_{\alpha_k}$ is either $n_{\alpha_*}$ or
$n_{\alpha_*} + 2$, and hence letting $k \rightarrow \infty$, we get 
\begin{equation*}
 \frac{h_{K^*}(\alpha_*) + h_{K^*}(-\alpha_*)}{2 \cos \alpha_*} - h_{K^*}(0) \geq
    \frac{\sigma}{\sqrt{n_{\alpha_*}+2}} \geq \frac{1}{\sqrt{3}}
    \frac{\sigma}{\sqrt{n_{\alpha_*}}},
\end{equation*}
where we also used that $n_{\alpha_*} \geq 1$. Combining the above
with \eqref{ddd}, we conclude that 
\begin{equation*}
 \frac{1}{\sqrt{3}}
    \frac{\sigma}{\sqrt{n_{\alpha_*}}} \leq \frac{h_{K^*}(\alpha_*) +
      h_{K^*}(-\alpha_*)}{2 \cos \alpha_*} - h_{K^*}(0) \leq
    \frac{\sigma}{\sqrt{n_{\alpha_*}}}.  
\end{equation*}
Using $\alpha = \alpha_*$ in \eqref{lbmaineq}, we get 
\begin{equation}\label{jobdone}
  r \geq \frac{\sigma^2}{24 n_{\alpha_*}}. 
\end{equation}
We shall now show that 
\begin{equation}\label{jjw}
  \alpha_* \leq \tilde{\alpha} := \frac{8 (k_* + 1) \pi}{n} 
\end{equation}
when $8(k_* + 1) \pi/n \leq \pi/4$ (otherwise \eqref{jjw} is
obvious). This would imply, because $\alpha \mapsto n_{\alpha}$ is
non-decreasing, that 
\begin{equation*}
  n_{\alpha_*} \leq n_{\tilde{\alpha}} = \frac{n \tilde{\alpha}}{\pi} -
  1 = 8 k_* + 7. 
\end{equation*}
This and \eqref{jobdone} would give
\begin{equation*}
  r \geq \frac{\sigma^2}{24(8k_* + 7)} \geq \frac{c \sigma^2}{k_* + 1} 
\end{equation*}
for a positive constant $c$. This would prove the theorem when  
assumption \eqref{con1} is true. 

To prove \eqref{jjw}, we only need to show that 
\begin{equation}\label{toos}
  \frac{h_{K^*}(\tilde{\alpha}) + h_{K^*}(-\tilde{\alpha})}{2 \cos
    \tilde{\alpha}} - h_{K^*}(0) >
  \frac{\sigma}{\sqrt{n_{\tilde{\alpha}}}} = \frac{\sigma}{\sqrt{8k_*
      + 7}}. 
\end{equation}
We verify this via Lemma \ref{useaux} on a case-by-case basis. When
$k_* = 0$, we have $\tilde{\alpha} = 8\pi/n$ so that, by Lemma
\ref{useaux}, the left hand side above is bounded from below by
$\Delta_2$. Because $k_*$ is zero, by definition of $k_*$, we have
\begin{equation*}
  \Delta_2 + \frac{2\sigma}{\sqrt{3}} \geq \Delta_0 + 2\sigma = 2
  \sigma. 
\end{equation*}
This gives $\Delta_2 \geq 2 \sigma(1 - (1/\sqrt{3}))$ which can be
verified to be larger than $\sigma/\sqrt{8k_* + 7} = \sigma/\sqrt{7}$.  

When $k_* = 1$, we have $\tilde{\alpha} = 16\pi/n$ so that, by Lemma 
\ref{useaux}, the left hand side in \eqref{toos} is bounded from below by
$\Delta_4$. Because $k_* = 1$, by definition of $k_*$, we have
\begin{equation*}
  \Delta_4 + \frac{2\sigma}{\sqrt{5}} \geq \Delta_1 +
  \frac{2\sigma}{\sqrt{2}} \geq \frac{2\sigma}{\sqrt{2}}
\end{equation*}
which gives $\Delta_4 \geq 2 \sigma((1/\sqrt{2}) -
(1/\sqrt{5}))$. This can be verified to be larger than
$\sigma/\sqrt{8k_* + 7} = \sigma/\sqrt{15}$. 

When $k_* \geq 2$, we again use Lemma \ref{useaux} to argue that the
left hand side in \eqref{toos} is bounded from below by $\Delta_{2(k_*
  + 1)}$. Because $\Delta_k$ is increasing in $k$ (Lemma
\ref{keydrop}), we have $\Delta_{2(k_* + 1)} \geq \Delta_{2k_*}$. By
the definition of $k_*$ (and the fact that $\Delta_{k_*} \geq 0$), we
have  
\begin{equation*}
  \Delta_{2k_*} \geq \frac{2 \sigma}{k_* + 1} \left( 1 -
    \sqrt{\frac{k_* + 1}{2k_* + 1}} \right). 
\end{equation*}
Because $k_* \geq 2$, it can be easily checked that $(k_* + 1)/(2k_* +
1) \leq 3/5$ and $(8k_* + 7)/(k_* + 1) \geq 23/3$. These, together
with the fact that $2(1 - \sqrt{3/5}) \sqrt{23/3} > 1$, imply
\eqref{toos}. This completes the proof of the theorem when 
assumption \eqref{con1} holds. 

We now deal with the simpler case when \eqref{con1} is violated. When
\eqref{con1} is violated, we first show that
\begin{equation}\label{will}
  k_* > \frac{12n }{16(1+2\sqrt{3} )^2}-1. 
\end{equation}
To see this, note first that, because \eqref{con1} is violated, we
have 
\begin{equation*}
  \frac{h_{K^*}(\alpha) + h_{K^*}(-\alpha)}{2\cos \alpha} - h_{K^*}(0) \leq
  \frac{\sigma}{\sqrt{n_{\alpha}}} \leq \sigma \left(\frac{n
      \alpha}{\pi} - 1 \right)^{-1/2}
\end{equation*}
for all $\alpha \in (0, \pi/4]$. Lemma~\ref{useaux} implies that for
every $1\leq k \leq n/16$, we get
\begin{equation*}
  \Delta_k \leq \frac{h_{K^*}(4k\pi/n) + h_{K^*}(-4k \pi/n)}{2 \cos 4k\pi/n} -
  h_{K^*}(0) \leq \frac{\sigma}{\sqrt{4k-1}} \leq \frac{\sigma}{\sqrt{3k}}. 
\end{equation*}
Now for every 
\begin{equation}\label{ghos}
  k \leq \frac{12n }{16(1+2\sqrt{3} )^2}-1, 
\end{equation}
we have
\begin{equation*}
  \Delta_k + \frac{2\sigma}{\sqrt{k+1}} \geq \frac{2\sigma}
    {\sqrt{k+1}} \geq \frac{\sigma}{\sqrt{3n/16}} + \frac{2\sigma}
    {\sqrt{n/16}} > \Delta_{n/16} + \frac{2\sigma}{\sqrt{n/16+1}}. 
\end{equation*}
It follows therefore that any $k$ satisfying~\eqref{ghos} cannot be a
minimizer of $\Delta_k + 2\sigma (k+1)^{-1/2}$, thereby
implying~\eqref{will}. 

Let $L^*$ be defined as the Minkowski sum of $K^*$ and the closed ball
with center 0 and radius $\sigma (3n/2)^{-1/2}$. In other words,
$L^* := \left\{x + \sigma (3n/2)^{-1/2} y : x \in K \text{ and }
  ||y|| \leq 1 \right\}$. The support function $L^*$ can be
checked to equal: 
\begin{equation*}
  h_{L^*}(\theta) = h_{K^*}(\theta) + \sigma (3n/2)^{-1/2}. 
\end{equation*}
Le Cam's bound again gives
\begin{equation}\label{jp}
  r \geq \frac{1}{4} \left(h_{K^*}(0) -
    h_{L^*}(0) \right)^2 \left\{1 - ||P_{K^*} - P_{L^*}||_{TV}
  \right\}
\end{equation}
where $r$ is as defined in \eqref{rde}. By use of Pinsker's
inequality, we have 
\begin{equation*}
  ||P_{K^*} - P_{L^*}||_{TV} \leq \frac{1}{2\sigma}
  \sqrt{\sum_{i=1}^{n} \left(h_K(2i\pi/n-\pi) - h_{\breve{K}}(2i\pi/n-\pi)
    \right)^2} = \frac{1}{2\sigma} \sqrt{\frac{n\sigma^2}{3n/2}}
  \leq \frac{1}{2}. 
\end{equation*}
Therefore, from~\eqref{jp} and~\eqref{will}, we get that 
\begin{equation*}
  r \geq \frac{\sigma^2}{12n} \geq \frac{1}{16(1+2\sqrt{3})^2}
  \frac{\sigma^2}{k_*+1}. 
\end{equation*}
This completes the  proof of Theorem \ref{lobo}.  

\subsection{Proof of Theorem \ref{nt}} 
Recall the definition of $\tilde{h}^P$ in \eqref{vsp}  and the
definition of the estimator $\hat{K}$ in \eqref{khdef}. The first
thing to note is that 
\begin{equation}\label{tenu}
  h_{\hat{K}}(\theta_i) = \hat{h}_i^P \qt{for every $i = 1, \dots, n$}. 
\end{equation}
To see this, observe first that, because $\hat{h}^P = (\hat{h}_1^P,
\dots, \hat{h}_n^P)$ is a valid support vector, there exists a set
$\tilde{K}$ with $h_{\tilde{K}}(\theta_i)  = \hat{h}_i^P$ for every $i$. It
is now trivial  
(from the definition of $\hat{K}$) to see that $\tilde{K} \subseteq
\hat{K}$ which implies that $h_{\hat{K}(\theta_i)} \geq
h_{\tilde{K}}(\theta_i) =  \hat{h}_i^P$. On the other hand, the definition
of $\hat{K}$ immediately gives $h_{\hat{K}}(\theta_i) \leq
\hat{h}_i^P$. 

The observation \eqref{tenu} immediately gives
\begin{equation*}
  \E_{K^*} L_f(K^*, \hat{K}) =  \E_{K^*} \frac{1}{n} \sum_{i=1}^n
  \left(h_{K^*}(\theta_i) - \hat{h}_i^P \right)^2 
\end{equation*}
It will be convenient here to introduce the following
notation. Let $h^{vec}_{K^*}$ denote the vector $(h_{K^*}(\theta_1),
\dots, h_{K^*}(\theta_n))$. Also, for $u, v \in \R^n$, let $\ell(u,
v)$ denote the scaled Euclidean   distance defined by $\ell^2(u, v) :=
\sum_{i=1}^n (u_i - v_i)^2/n$. With this notation, we have 
\begin{equation}\label{tanu}
\E_{K^*} L_f(K^*, \hat{K}) = \E_{K^*} \ell^2(h_{K^*}^{vec}, \hat{h}^P).    
\end{equation}
Recall that $\hat{h}^P$ is the projection of $\hat{h} := (\hat{h}_1,
\dots, \hat{h}_n)$ onto $\Hs$. Because $\Hs$ is a closed convex subset
of $\R^n$, it follows that (see, for example, \citet{stark1998vector}) 
\begin{equation*}
  \ell^2(h, \hat{h}) \geq \ell^2(\hat{h}, \hat{h}^P) +
  \ell^2(h, \hat{h}^P) \qt{for every $h \in \Hs$}. 
\end{equation*}
In particular, with $h = h_{K^*}^{vec}$, we obtain
$\ell^2(h_{K^*}^{vec}, \hat{h}^P) \leq \ell^2(h_{K^*}^{vec},
\hat{h})$. Combining this with \eqref{tanu}, we obtain 
\begin{equation}\label{spac}
  \E_{K^*} L_f(K^*, \hat{K}) \leq \E_{K^*} \ell^2 (h_{K^*}^{vec},
  \hat{h}) = \frac{1}{n} \sum_{i=1}^n \E_{K^*} \left(\hat{h}_i -
    h_{K^*}(\theta_i) \right)^2. 
\end{equation}
In Theorem \ref{rbe}, we proved that  
\begin{equation*}
  \E_{K^*} \left(\hat{h}_i - h_{K^*}(\theta_i) \right)^2 \leq \frac{C
    \sigma^2}{k_*(i) + 1} \qt{for every $i = 1, \dots, n$}. 
\end{equation*}
This implies that 
\begin{equation*}
\E_{K^*} L_f(K^*, \hat{K}) \leq \frac{C
    \sigma^2}{n} \sum_{i=1}^n \frac{1}{k_*(i) + 1}. 
\end{equation*}
For inequality \eqref{nt.eq}, it is therefore enough to prove that 
\begin{equation}\label{bhagat}
  \sum_{i=1}^n \frac{1}{k_*(i) + 1} \leq C \left\{1 + \left(\frac{R
        \sqrt{n}}{\sigma} \right)^{2/5} \right\}. 
\end{equation}
Our following proof of \eqref{bhagat} is inspired by an argument due
to  \citet[Theorem 2.1]{Zhang02} in a very different context.  

Recall that $k_*(i)$ takes values in $\I := \{0\} \cup \{2^j: j \geq
0, 2^j \leq \lfloor n/16 \rfloor \}$. For $k \in \I$, let  
\begin{equation*}
  \rho(k) := \sum_{i=1}^n I\{k_*(i) = k\} ~~~~~~ \text{ and } ~~~~~
  \ell(k) := \sum_{i=1}^n I \{k_*(i) <  k  \}
\end{equation*}
Note that $\ell(0) = 0, \ell(1) = \rho(0)$ and $\rho(k) = \ell(2k) -
\ell(k)$ for $k \geq 1, k \in \I$. As a result
\begin{equation*}
  \sum_{i=1}^n \frac{1}{k_*(i) + 1} = \sum_{k \in \I}
  \frac{\rho(k)}{k+1} = \ell(1) + \sum_{k \geq 1, k \in \I}
  \frac{\ell(2k) - \ell(k)}{k+1}. 
\end{equation*}
Let $K$ denote the maximum element of $\I$. Because $\ell(2K) = n$, we
can write
\begin{equation*}
  \sum_{i=1}^n \frac{1}{k_*(i) + 1} = \frac{n}{K+1} +
  \frac{\ell(1)}{2} + \sum_{k \geq 2, k \in \I} \frac{k
    \ell(k)}{(k+1)(k+2)}. 
\end{equation*}
Using $n/(K+1) \leq C$ and loose bounds for the other terms above, we
obtain 
\begin{equation}\label{yapa}
  \sum_{i=1}^n \frac{1}{k_*(i) + 1} \leq C + \sum_{k \geq 1, k \in \I}
  \frac{3\ell(k)}{k}.  
\end{equation}
We shall show below that 
\begin{equation}\label{toso}
  \ell(k) \leq \min \left(n, \frac{A R k^{5/2}}{\sigma n} \right)
  \qt{for all $k \in \I$}
\end{equation}
for a universal positive constant $A$. Before that, let us first prove
\eqref{bhagat}  assuming \eqref{toso}. Assuming \eqref{toso}, we can
write
\begin{equation}\label{gosw}
  \sum_{k \geq 1, k \in \I} \frac{\ell(k)}{k}  =   \sum_{k \geq 1, k
    \in \I} \frac{\ell(k)}{k} I \left\{k \leq \left(\frac{\sigma n^2}{A R}
    \right)^{2/5} \right\}   +   \sum_{k \geq 1, k \in \I}
  \frac{\ell(k)}{k}  I \left\{k > \left(\frac{\sigma n^2}{A R}
    \right)^{2/5} \right\} 
\end{equation}
In the first term on the right hand side above, we use the bound
$\ell(k) \leq A R k^{5/2}/(\sigma n)$. We then get
\begin{equation*}
  \sum_{k \geq 1, k
    \in \I} \frac{\ell(k)}{k} I \left\{k \leq \left(\frac{\sigma n^2}{A R}
    \right)^{2/5} \right\}  \leq \frac{A R}{\sigma n} \sum_{k \geq 1,
    k \in \I} k^{3/2} I \left\{k \leq \left(\frac{\sigma n^2}{A R}
    \right)^{2/5} \right\}. 
\end{equation*}
Because $\I$ consists of integers of the form $2^j$, the sum in the
right hand side above is bounded from above by a constant multiple of
the last term. This gives
\begin{equation}\label{aap1}
    \sum_{k \geq 1, k
    \in \I} \frac{\ell(k)}{k} I \left\{k \leq \left(\frac{\sigma n^2}{A R}
    \right)^{2/5} \right\}  \leq \frac{CR}{\sigma n}
  \left(\frac{\sigma n^2}{A R} \right)^{3/5} =
C  \left(\frac{R\sqrt{n}}{\sigma} \right)^{2/5} 
\end{equation}
For the second term on the right hand side in \eqref{gosw}, we use the
bound $\ell(k) \leq n$ which gives
\begin{equation*}
   \sum_{k \geq 1, k
    \in \I} \frac{\ell(k)}{k} I \left\{k > \left(\frac{\sigma n^2}{A R}
    \right)^{2/5} \right\}  \leq n \sum_{k \geq 1, k
    \in \I} k^{-1} I \left\{k > \left(\frac{\sigma n^2}{A R}
    \right)^{2/5} \right\} 
\end{equation*}
Again, because $\I$ consists of integers of the form $2^j$, the sum in
the right hand side above is bounded from above by a constant multiple
of the first term. This gives
\begin{equation}\label{aap2}
  \sum_{k \geq 1, k
    \in \I} \frac{\ell(k)}{k} I \left\{k > \left(\frac{\sigma n^2}{A R}
    \right)^{2/5} \right\}  \leq C n \left(\frac{\sigma n^2}{A R}
  \right)^{-2/5} = C  \left(\frac{R\sqrt{n}}{\sigma} \right)^{2/5} . 
\end{equation}
Inequalities \eqref{aap1} and \eqref{aap2} in conjunction with
\eqref{yapa} proves \eqref{bhagat} which would complete the proof of
\eqref{nt.eq}. 

We only need to prove \eqref{toso}. For this, observe first that when
$k_*(i) < k$, Corollary \ref{arbe} gives that 
\begin{equation}\label{coro}
  \Delta_k(\theta_i) \geq \frac{(\sqrt{6} - 2) \sigma}{\sqrt{k + 1}}. 
\end{equation}
This is because if \eqref{coro} is violated, then Corollary \ref{arbe}
gives $k \leq \breve{k}(i) \leq k_*(i)$. Consequently, we have 
\begin{equation*}
  I\{k_*(i) < k\} \leq \frac{\Delta_k(\theta_i) \sqrt{k+1}}{(\sqrt{6}
    - 2) \sigma}
\end{equation*}
and 
\begin{equation}\label{elb}
  \ell(k) \leq \frac{\sqrt{k+1}}{(\sqrt{6} - 2) \sigma} \sum_{i=1}^n
  \Delta_k(\theta_i) \qt{for every $k \in \I$}. 
\end{equation}
Now using the expression \eqref{alex} for $\Delta_k(\theta_i)$, it is
easy to see that 
\begin{equation}\label{kart}
  \sum_{i=1}^n \Delta_k(\theta_i) = \frac{1}{k+1} \sum_{j=0}^k \delta_j
\end{equation}
where $\delta_j$ is given by 
\begin{equation*}
\delta_j = \sum_{i=1}^n \left(\frac{h_{K^*}(\theta_i+ 4j\pi/n) + h_{K^*}(\theta_i
    -4j \pi/n)}{2} - \frac{\cos(4j\pi/n)}{\cos(2j\pi/n)} 
    \frac{h_{K^*}(\theta_i+2j\pi/n) + h_{K^*}(\theta_i -2j\pi/n)}{2}
  \right).  
\end{equation*}
We will now prove an upper bound for $\delta_j$ under the assumption
that $K^*$ is contained in a ball of radius $R \geq 0$. We may assume
without loss of generality that this ball is centered at the origin
because the expression for $\delta_j$ above remains unchanged if
$h_{K^*}(\theta)$ is replaced by $h_{K^*}(\theta) - a_1 \cos \theta -
a_2 \sin \theta$ for any $(a_1, a_2) \in \R^2$. Because $\theta_i =
2\pi i/n - \pi$, we can rewrite $\delta_j$ as 
\begin{equation*}
\delta_j = \sum_{i=1}^n \left(\frac{h_{K^*}(\theta_{i+2j}) +
    h_{K^*}(\theta_{i-2j})}{2} - \frac{\cos(4j\pi/n)}{\cos(2j\pi/n)}  
    \frac{h_{K^*}(\theta_{i+j}) + h_{K^*}(\theta_{i-j})}{2}
  \right).  
\end{equation*}
Because $\theta \mapsto h_{K^*}(\theta)$ is a periodic function of
period $2 \pi$, the above expression only depends on
$h_{K^*}(\theta_1)$, ..., $h_{K^*}(\theta_n)$. In fact, it is easy to
see that 
\begin{equation*}
  \delta_j = \left(1 - \frac{\cos (4 j \pi/n)}{\cos (2 j \pi/n)}
  \right) \sum_{i=1}^n h_{K^*}(\theta_i). 
\end{equation*}
Now because $K^*$ is contained in the ball of radius $R$ centered at
the origin, it follows that $|h_{K^*}(\theta_i)| \leq R$ for each $i$
which gives
\begin{equation*}
  \delta_j \leq n R \left(1 - \frac{\cos (4 j \pi/n)}{\cos (2 j
      \pi/n)} \right) \leq n R \left(1 - \frac{\cos (4 k \pi/n)}{\cos (2 k
      \pi/n)} \right) = \frac{n R(1 + 2 \cos 2\pi k/n)}{\cos 2 \pi k/n}
  (1 - \cos 2\pi k/n)
\end{equation*}
for all $0 \leq j \leq k$. Because $k \leq n/16$ for all $k \in \I$,
it follows that  
\begin{equation*}
  \delta_j \leq 8 n R \sin^2 (\pi k/n) \leq \frac{8 R \pi^2 k^2}{n}
  \qt{for all $0 \leq j \leq k$}. 
\end{equation*}
The identity \eqref{kart} therefore gives $\sum_{i=1}^n
\Delta_k(\theta_i) \leq 8 R \pi^2 k^2/n$ for all $k \in
\I$. Consequently, from \eqref{elb} and the trivial fact that $\ell(k)
\leq n$, we obtain
\begin{equation*}
  \ell(k) \leq \min \left(n,  \frac{8 \pi^2}{(\sqrt{6} - 2)}
  \frac{R k^2\sqrt{k+1}}{\sigma n} \right) \qt{for all $k \in \I$}. 
\end{equation*}
Note that $\ell(0) = 0$ so that the above inequality only gives
something useful for $k \geq 1$. Using $k + 1 \leq 2 k$ for $k \geq 1$
and denoting the resulting constant by $C$, we obtain
\eqref{toso}. This completes the proof of Theorem \ref{nt}.

\subsection{Proof of Theorem \ref{theorem::General}}
The following lemma will be crucially used in our proof of Theorem
\ref{theorem::General}. For every compact, convex set $P$ and $i = 1,
\dots, n$, let $k_*^P(i)$  denote the quantity $k_*$ with $K^*$ replaced by 
$P$. More precisely, 
\begin{equation*}
  k_*^P(i) := \argmin_{k \in \I} \left(\Delta_k^P(\theta_i) + \frac{2
      \sigma}{\sqrt{k+1}} \right) 
\end{equation*}
where $\Delta_k^P(\theta_i)$ is given by 
\begin{equation*}
  \frac{1}{k+1} \sum_{j=0}^k \left( \frac{h_{P}(\theta_i+ 4j\pi/n) +
      h_{P}(\theta_i -4j \pi/n)}{2} -
    \frac{\cos(4j\pi/n)}{\cos(2j\pi/n)}
    \frac{h_{P}(\theta_i+2j\pi/n) + h_{P}(\theta_i
      -2j\pi/n)}{2}\right). 
\end{equation*}
The next lemma states that for every $i = 1, \dots, n$, the risk
$\E_{K^*} (\hat{h}_i - h_{K^*}(\theta_i))^2$ can be bounded from above
by a combination of $k_*^P(i)$ and how well $K^*$ can be approximated
by $P$. This result holds for every $P$. The approximation of $K^*$ by
$P$ is measured in terms of the Hausdorff distance (defined in
\eqref{hausdef}). 

\begin{lemma}[Approximation]\label{appx}
  There exists a universal positive constant $C$ such that for every
  $i = 1, \dots, n$ and every compact, convex set $P$, we have  
  \begin{equation}\label{appx.eq}
     \E_{K^*} \left(\hat{h}_i - h_{K^*}(\theta_i) \right)^2 \leq C
     \left(\frac{\sigma^2}{k_*^P(i) + 1} + \ell_H^2(K^*, P) \right). 
  \end{equation}
\end{lemma}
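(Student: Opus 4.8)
The plan is to derive Lemma~\ref{appx} from Theorem~\ref{rbe} together with a purely deterministic comparison of the two indices $k_*^{K^*}(i)$ and $k_*^P(i)$. Indeed, Theorem~\ref{rbe} already shows $\E_{K^*}(\hat h_i - h_{K^*}(\theta_i))^2 \le C_0\,\sigma^2/(k_*^{K^*}(i)+1)$ for a universal constant $C_0$ (namely the constant of Theorem~\ref{rbe}; note that with $P=K^*$ the index $k_*^P(i)$ reduces to the $k_*(i)$ of \eqref{kst}), so it is enough to prove the deterministic inequality
\[
\frac{\sigma^2}{k_*^{K^*}(i)+1} \;\le\; C\left(\frac{\sigma^2}{k_*^P(i)+1} + \ell_H^2(K^*,P)\right).
\]
Throughout I write $\epsilon:=\ell_H(K^*,P)$, $m:=k_*^{K^*}(i)$, $m':=k_*^P(i)$, and $\psi(k):=\Delta_k^{K^*}(\theta_i)+2\sigma/\sqrt{k+1}$ for $k\in\I$, so that $m=\argmin_{k\in\I}\psi(k)$ and, since $\Delta_k^{K^*}(\theta_i)\ge 0$ by \eqref{kiest}, $\psi(k)\ge 2\sigma/\sqrt{k+1}$.

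The first step is a perturbation estimate for the $\Delta_k$'s. Using the expression \eqref{alex} for $\Delta_k^K(\theta_i)$ with $K=K^*$ and $K=P$, the bound $|h_{K^*}(\theta)-h_P(\theta)|\le\epsilon$ for all $\theta$ (which is \eqref{hausdef}), and the fact that $0\le\cos(4j\pi/n)/\cos(2j\pi/n)\le 1$ whenever $0\le j\le k\le\lfloor n/16\rfloor$ (because then $4j\pi/n\le\pi/4$), each summand of $\Delta_k^{K^*}(\theta_i)-\Delta_k^P(\theta_i)$ is at most $2\epsilon$ in absolute value, hence
\[
\bigl|\Delta_k^{K^*}(\theta_i)-\Delta_k^P(\theta_i)\bigr|\le 2\epsilon\qquad\text{for every }k\in\I.
\]
If $\epsilon>\sigma/2$ there is nothing left to do, because Theorem~\ref{rbe} gives $\E_{K^*}(\hat h_i-h_{K^*}(\theta_i))^2\le C_0\sigma^2\le 4C_0\epsilon^2$; so from now on assume $\epsilon\le\sigma/2$.

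The second step is to choose the right comparison scale. Let $k^\sharp$ be the largest element of $\I$ with $k^\sharp\le m'$ and $k^\sharp+1\le\sigma^2/(4\epsilon^2)$ (with the convention $\sigma^2/(4\epsilon^2)=\infty$ when $\epsilon=0$); this family is nonempty, as $0$ belongs to it. Using monotonicity of $k\mapsto\Delta_k^P(\theta_i)$ (Lemma~\ref{keydrop} applied to $P$), inequality \eqref{delkstar.eq} applied to $P$, the perturbation estimate above, and $2\epsilon\le\sigma/\sqrt{k^\sharp+1}$, one gets
\[
\psi(k^\sharp)\le\Delta_{k^\sharp}^P(\theta_i)+2\epsilon+\frac{2\sigma}{\sqrt{k^\sharp+1}}\le\frac{6(\sqrt2-1)\sigma}{\sqrt{m'+1}}+\frac{3\sigma}{\sqrt{k^\sharp+1}}\le\frac{(6\sqrt2-3)\sigma}{\sqrt{k^\sharp+1}}.
\]
Hence any $k\in\I$ with $(k^\sharp+1)/(k+1)>(6\sqrt2-3)^2/4$ satisfies $\psi(k)\ge 2\sigma/\sqrt{k+1}>(6\sqrt2-3)\sigma/\sqrt{k^\sharp+1}\ge\psi(k^\sharp)\ge\psi(m)$, so it cannot be the minimizer $m$; therefore $m+1\ge 4(k^\sharp+1)/(6\sqrt2-3)^2$. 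Finally, since $m'\le\max\I$, the dyadic structure of $\I$ gives $k^\sharp+1\ge\tfrac12\min\!\bigl(m'+1,\,\sigma^2/(4\epsilon^2)\bigr)$, and combining the last two facts yields
\[
\frac{\sigma^2}{m+1}\le\frac{(6\sqrt2-3)^2\sigma^2}{4(k^\sharp+1)}\le\frac{(6\sqrt2-3)^2}{2}\max\!\left(\frac{\sigma^2}{m'+1},\,4\epsilon^2\right)\le 2(6\sqrt2-3)^2\left(\frac{\sigma^2}{m'+1}+\epsilon^2\right).
\]
Multiplying by $C_0$ and combining with the case $\epsilon>\sigma/2$ establishes \eqref{appx.eq}.

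The main difficulty here is conceptual rather than computational: one has to notice that the estimator's risk is already controlled by $\sigma^2/(k_*^{K^*}(i)+1)$ through Theorem~\ref{rbe}, so that the entire content of the lemma is the deterministic comparison of $k_*^{K^*}(i)$ with $k_*^P(i)$ — two quantities that can be very far apart (for instance when $K^*$ is a slight perturbation of a polytope $P$). That comparison must then be carried out using only the uniform closeness of the sequences $(\Delta_k^{K^*}(\theta_i))_k$ and $(\Delta_k^P(\theta_i))_k$ and the $\argmin$ characterisation of $k_*$; the dyadic structure of $\I$ is what keeps every constant universal.
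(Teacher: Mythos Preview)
Your proof is correct and follows the same overall strategy as the paper: reduce via Theorem~\ref{rbe} to the deterministic comparison $\sigma^2/(k_*(i)+1)\le C\bigl(\sigma^2/(k_*^P(i)+1)+\ell_H^2(K^*,P)\bigr)$, establish the perturbation bound $|\Delta_k-\Delta_k^P|\le 2\ell_H(K^*,P)$, and combine it with the structural properties of $\Delta_k$ from Lemmas~\ref{keydrop} and~\ref{delkstar}. The only tactical difference is in how the comparison is finished: the paper case-splits on whether $k_*^P+1\ge c(k_*+1)$ for a fixed constant $c$ and, in the nontrivial case, invokes the growth estimate \eqref{an.delkstar.eq} at $k=k_*^P$ to force $\ell_H(K^*,P)\ge c'\sigma/\sqrt{k_*+1}$; you instead case-split on the size of $\epsilon$, introduce the auxiliary index $k^\sharp$, and bound $k_*$ from below directly through the $\argmin$ characterization of $\psi$. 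Your route avoids the second half of \eqref{an.delkstar.eq} at the cost of the extra index $k^\sharp$; both arguments are of the same length and yield comparable constants.
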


\begin{proof}[Proof of Lemma \ref{appx}]
  Fix $i \in \{1, \dots, n\}$ and a compact, convex set $P$. For
  notational convenience, we write $\Delta_k, \Delta_k^P, k_*$ and
  $k_*^P$ for $\Delta_k(\theta_i), \Delta_k^P(\theta_i),
  k_*(\theta_i)$ and $k_*^P(\theta_i)$ respectively. 

  We assume that the following condition holds:
  \begin{equation}\label{conc}
    k_*^P + 1 \geq \frac{24(\sqrt{2} - 1)}{\sqrt{6} - 2} (k_* + 1). 
  \end{equation}
  If this condition does not hold, we have
  \begin{equation*}
    \frac{1}{k_* + 1} < \frac{24(\sqrt{2} - 1)}{\sqrt{6} - 2}
    \frac{1}{k_*^P + 1}
  \end{equation*}
  and then \eqref{appx} immediately follows from Theorem \ref{rbe}. 

  Note that \eqref{conc} implies, in particular, that $k_*^P >
  k_*$. Inequality \eqref{an.delkstar.eq} in Lemma \ref{delkstar}
  applied to $k = k_*^P$ implies therefore that
  \begin{equation*}
    \Delta_{k_*^P} \geq \frac{(\sqrt{6} - 2) \sqrt{k_*^P + 1}
      \sigma}{2(k_* + 1)}. 
  \end{equation*}
  Also inequality \eqref{delkstar.eq} applied to the set $P$ instead
  of $K^*$ gives 
  \begin{equation*}
    \Delta^P_{k_*^P} \leq \frac{6(\sqrt{2} - 1) \sigma}{\sqrt{k_*^P +
        1}}. 
  \end{equation*}
  Combining the above pair of inequalities, we obtain 
  \begin{equation*}
    \Delta_{k_*^P} - \Delta_{k_*^P}^P \geq \frac{(\sqrt{6} - 2) \sqrt{k_*^P + 1}
      \sigma}{2(k_* + 1)} - \frac{6(\sqrt{2} - 1) \sigma}{\sqrt{k_*^P +
        1}}. 
  \end{equation*}
  The right hand above is non-decreasing in $k_*^P + 1$ and so we can
  replace $k_*^P + 1$ by the lower bound in \eqref{conc} to obtain,
  after some simplication, 
  \begin{equation}\label{mmd}
    \Delta_{k_*^P} - \Delta_{k_*^P}^P \geq \frac{\sigma}{4 \sqrt{k_* +
      1}} \sqrt{24(\sqrt{2} - 1)(\sqrt{6} - 2)}. 
  \end{equation}
  The key now is to observe that 
  \begin{equation}\label{obsb}
    |\Delta_k - \Delta_k^P| \leq 2 \ell_H(K^*, P) \qt{for all $k$}. 
  \end{equation}
  This follows from the definition \eqref{hausdef} of the Hausdorff
  distance which gives 
  \begin{equation*}
    \left| \Delta_{k} - \Delta_k^{P} \right| \leq \ell_H(K^*, P)
    \left(1 + \frac{1}{k+1} \sum_{j=0}^k \frac{\cos (4j\pi/n)}{\cos
        (2j \pi/n)} \right)
  \end{equation*}
  and this clearly implies \eqref{obsb} because $\cos (4 j \pi/n)/\cos
  (2j \pi/n) \leq 1$ for all $0 \leq j \leq k$. 

   From \eqref{obsb} and \eqref{mmd}, we deduce that 
   \begin{equation*}
     \ell_H(K^*, P) \geq \frac{c \sigma}{\sqrt{k_* + 1}} 
   \end{equation*}
  for a universal positive constant $c$. This, together with
  inequality \eqref{rbe.eq}, clearly implies \eqref{appx.eq} which
  completes the proof. 
\end{proof}

We are now ready to prove Theorem \ref{theorem::General}. 
\begin{proof}[Proof of Theorem \ref{theorem::General}]
We use inequality \eqref{spac} from the proof of Theorem
\ref{nt}. This inequality, along with \eqref{appx.eq} for $i = 1,
\dots, n$, gives 
\begin{equation*}
  \E_{K^*} L_f \left(K^*, \hat{K} \right) = \frac{1}{n}
  \sum_{i=1}^n \E_{K^*} \left(\hat{h}_i - h_{K^*}(\theta_i) \right)^2
  \leq C \left(\frac{\sigma^2}{n} \sum_{i=1}^n \frac{1}{k_*^P(i) + 1}
    + \ell_H^2(K^*, P) \right)
\end{equation*}
for every compact, convex set $P$. By restricting $P$ to be in the
class of polytopes, we get
\begin{equation*}
  \E_{K^*} L_f \left(K^*, \hat{K} \right) \leq C \inf_{P \in \Ps} 
 \left(\frac{\sigma^2}{n} \sum_{i=1}^n \frac{1}{k_*^P(i) + 1} +
    \ell_H^2(K^*, P) \right). 
\end{equation*}
For the proof of \eqref{GloGen}, it is therefore enough to show that 
\begin{equation}\label{jk}
  \sum_{i=1}^n \frac{1}{k^P_*(i) + 1} \leq C v_P \log (en/v_P) \qt{for
    every $P \in \Ps$}
\end{equation}
where $v_P$ denotes the number of extreme points of $P$ and $C$ is a
universal positive constant.  Fix a polytope $P$ with $v_P = k$. Let
the extreme points of $P$ be $z_1, \dots, z_k$. Let $S_1, \dots, S_k$
denote a partition of $\{\theta_1, \dots, \theta_n\}$ into $k$
nonempty sets such that for each $j = 1, \dots, m$, we have  
\begin{equation*}
  h_P(\theta_i) = z_j(1) \cos \theta_i + z_j(2) \sin \theta_i 
  \qt{for all $\theta_i \in S_j$}
\end{equation*}
where $z_j = (z_j(1), z_j(2))$. For \eqref{jk}, it is enough to prove
that 
\begin{equation}\label{jkm}
  \sum_{i: \theta_i \in S_j} \frac{1}{k^P_*(i) + 1} \leq C \log (en_j)
  \qt{for every $j = 1, \dots, k$}
\end{equation}
where $n_j$ is the cardinality of $S_j$. This is because we can write
\begin{equation*}
  \sum_{i=1}^n \frac{1}{k_*^P(i) + 1}  = \sum_{j=1}^k \sum_{i:
    \theta_i \in S_j} \frac{1}{k_*^P(i) + 1} \leq C \sum_{j=1}^k \log
  (en_j) \leq C k \log(en/k)
\end{equation*}
where we used the concavity of $x \mapsto \log(ex)$. We prove
\eqref{jkm} below. Fix $1 \leq j \leq k$. The inequality is obvious if
$S_j$ is a singleton because $k_*^P(i) \geq 
0$. So suppose that $n_j = m \geq 2$. Without loss of
generality assume that $S_j = \{\theta_{u+1}, \dots, \theta_{u+m}\}$
where $0 \leq u \leq n-m$. The definition of $S_j$ implies that
\begin{equation*}
  h_{P}(\theta) = z_j(1) \cos \theta + z_j(2) \sin \theta \qt{for
    all $\theta \in [\theta_{u+1}, \theta_{u+m}]$}. 
\end{equation*}
We can therefore apply inequality \eqref{vidu.eq} to claim the
existence of a positive constant $c$ such that 
\begin{equation*}
  k_*^P(i) \geq c ~ n \min \left(\theta_i - \theta_{u+1}, \theta_{u+m}
  - \theta_i \right) \qt{for all $u+1 \leq i \leq u+m$}. 
\end{equation*}
The minimum with $\pi$ in \eqref{vidu.eq} is redundant here because
$\theta_{u+m} - \theta_{u+1} < 2 \pi$. Because $\theta_i = 2\pi i/n -
\pi$, we get 
\begin{equation*}
  k_*^P(i) \geq 2 \pi c \min \left(i-u-1, u+m-i \right) \qt{for all $u +
    1 \leq i \leq u+m$}.  
\end{equation*}
Therefore, there exists a universal constant $C$ such that 
\begin{equation*}
  \sum_{i: \theta_i \in S_j} \frac{1}{k^P_*(i) + 1} \leq C
  \sum_{i=1}^m  \frac{1}{1 + \min(i-1, m-i)} \leq C
  \sum_{i=1}^{m} \frac{1}{i} \leq C \log (em). 
\end{equation*}
This proves \eqref{jkm} thereby completing the proof of Theorem
\ref{theorem::General}. 
\end{proof}

\subsection{Proof of Theorem \ref{nti}}
Recall the definition \eqref{khpdef} of the estimator $\hat{K}'$ and
that of the interpolating function \eqref{hind}. Following an argument
similar to that used at the beginning of the proof of Theorem
\ref{nt}, we observe that
\begin{equation}\label{fobs}
  \E_{K^*} L(K^*, \hat{K}') \leq \int_{-\pi}^{\pi} \E_{K^*}
  \left(h_{K^*}(\theta) - \hat{h}'(\theta) \right)^2 d\theta =
  \sum_{i=1}^{n} \int_{\theta_i}^{\theta_{i+1}}  \E_{K^*}
  \left(h_{K^*}(\theta) - \hat{h}'(\theta) \right)^2 d\theta 
\end{equation}
Now fix $1 \leq i \leq n$, $\theta_i \leq \theta \leq
\theta_{i+1}$ and let $u(\theta) := \E_{K^*} \left(h_{K^*}(\theta) -
  \hat{h}'(\theta) \right)^2$. Using the expression \eqref{hind} for
$\hat{h}'(\theta)$, we get that
\begin{equation*}
  u(\theta) = \E_{K^*} \left(h_{K^*}(\theta) - \frac{\sin(\theta_{i+1}
      - \theta)}{\sin (\theta_{i+1} - \theta_i)} \hat{h}_i -
    \frac{\sin(\theta - \theta_i)}{\sin(\theta_{i+1} - \theta_i)}
    \hat{h}_{i+1} \right)^2 . 
\end{equation*}
We now write $\hat{h}_i = \hat{h}_i - h_{K^*}(\theta_i) +
h_{K^*}(\theta_i)$ and a similar expression for $\hat{h}_{i+1}$. The
elementary inequality $(a + b + c)^2 \leq 3(a^2+b^2+c^2)$ along with
$\max \left( \sin(\theta - \theta_i), \sin(\theta_{i+1} -
  \theta)\right) \leq \sin(\theta_{i+1} - \theta_i)$ then imply that 
\begin{equation*}
  u(\theta) \leq 3 \E_{K^*} \left(\hat{h}_i - h_{K^*}(\theta_i)
  \right)^2 + 3 \E_{K^*} \left(\hat{h}_{i+1} - h_{K^*}(\theta_{i+1})
  \right)^2 + 3 b^2(\theta)
\end{equation*}
where 
\begin{equation*}
  b(\theta) := h_{K^*}(\theta) - \frac{\sin(\theta_{i+1}
      - \theta)}{\sin (\theta_{i+1} - \theta_i)} h_{K^*}(\theta_i) -
    \frac{\sin(\theta - \theta_i)}{\sin(\theta_{i+1} - \theta_i)}
    h_{K^*}(\theta_{i+1}) 
\end{equation*}
Therefore from \eqref{fobs} (remember that $|\theta_{i+1} - \theta_i|
= 2 \pi/n$), we deduce 
\begin{equation*}
  \E_{K^*} L(K^*, \hat{K}') \leq \frac{12 \pi}{n} \sum_{i=1}^n
  \E_{K^*} \left(\hat{h}_i - h_{K^*}(\theta_i) \right)^2 + 3
  \int_{-\pi}^{\pi} b^2(\theta) d\theta. 
\end{equation*}
Now to bound $\sum_{i=1}^n \E_{K^*} \left(\hat{h}_i -
  h_{K^*}(\theta_i) \right)^2$, we can simply use the arguments from
the proofs of Theorems \ref{nt} and \ref{theorem::General}. Therefore,
to complete the proof of Theorem \ref{nti}, we only need to show that
\begin{equation}\label{bth}
  |b(\theta)| \leq \frac{C R}{n} \qt{for every $\theta \in (-\pi, \pi]$}
\end{equation}
for some universal constant $C$.  For this, we use the hypothesis that
$K^*$ is contained in a ball of radius $R$. Suppose that the center of
the ball is $(x_1, x_2)$. Define $K' := K^* - \{(x_1, x_2)\} :=
\{(y_1, y_2) - (x_1, x_2) : (y_1, y_2) \in K^* \}$ and note that
$h_{K'}(\theta) = h_{K^*}(\theta) - x_1 \cos \theta - x_2 \sin
\theta$. It is then easy to see that $b(\theta)$ is the same for both
$K^*$ and $K'$. It is therefore enough to prove \eqref{bth} assuming
that $(x_1, x_2) = (0, 0)$.  In this case, it is straightforward to
see that $|h_{K^*}(\theta)| \leq R$ for all $\theta$ and also that
$h_{K^*}$ is Lipschitz with constant $R$. Now, because $\max \left(
  \sin(\theta - \theta_i), \sin(\theta_{i+1} - \theta)\right) \leq
\sin(\theta_{i+1} - \theta_i)$, it can be checked that 
\begin{equation*}
  |b(\theta)| \leq |h_{K^*}(\theta)| \left|1 - \frac{\sin(\theta_{i+1}
      - \theta)}{\sin(\theta_{i+1} - \theta_i)} - \frac{\sin(\theta -
      \theta_i)}{\sin(\theta_{i+1} - \theta_i)}  \right| +
  |h_{K^*}(\theta_i) - h_{K^*}(\theta)| + |h_{K^*}(\theta_{i+1}) -
  h_{K^*}(\theta)|. 
\end{equation*}
Because $h_{K^*}$ is $R$-Lipschitz and bounded by $R$, it is clear
that we only need to show 
\begin{equation*}
  \left|1 - \frac{\sin(\theta_{i+1}
      - \theta)}{\sin(\theta_{i+1} - \theta_i)} - \frac{\sin(\theta -
      \theta_i)}{\sin(\theta_{i+1} - \theta_i)}  \right| \leq \frac{C}{n}
\end{equation*}
in order to prove \eqref{bth}.  For this, write $\alpha = \theta_{i+1}
- \theta$ and $\beta = \theta - \theta_i$ so that the above expression
becomes
\begin{equation*}
  \left|1 - \frac{\sin \alpha + \sin \beta}{\sin(\alpha + \beta)}
  \right| \leq |1 - \cos \alpha| + |1 - \cos \beta| \leq
  \frac{\alpha^2 + \beta^2}{2} \leq \frac{C}{n^2} \leq \frac{C}{n}. 
\end{equation*}
This completes the proof of Theorem \ref{nti}.

\subsection{Proofs of Corollaries in Section \ref{cors}} 

The proofs of the corollaries stated in Section \ref{cors} are given here. For these proofs, we need some simple properties of the $\Delta_k(\theta_i)$ which are stated and proved in Appendix \ref{apap}.  

We start with the proof of Corollary \ref{arbe}. 
\begin{proof}[Proof of Corollary \ref{arbe}]
Fix $1 \leq i \leq n$. We will prove that $\breve{k}(i) \leq k_*(i) \leq
\tilde{k}(i)$. Inequality \eqref{arbe.eq} would then follow from
Theorem \ref{rbe}. For simplicity, we write $\Delta_k$ for
$\Delta_k(\theta_i)$,  $f_k$ for $f_k(\theta_i)$, $g_k$ for
$g_k(\theta_i)$, $k_*$ for $k_*(i)$, $\breve{k}$ for $\breve{k}(i)$
and $\tilde{k}$ for $\tilde{k}(i)$. 

Inequality \eqref{an.delkstar.eq} in Lemma \ref{delkstar} gives 
\begin{equation*}
  \Delta_k \geq \frac{\sigma (\sqrt{6} - 2)}{\sqrt{k + 1}} \qt{for all
  $k > k_*, k \in \I$}. 
\end{equation*}
Thus any $k \in \I$ for which $f_k \leq \Delta_k < \sigma(\sqrt{6} -
2)/\sqrt{k+1}$ has to satisfy $k \leq k_*$. This proves $\breve{k}
\leq k_*$. 

For $k_* \leq \tilde{k}$, we first inequality \eqref{delkstar.eq} in
Lemma \ref{delkstar} to obtain $\Delta_{k_*} \geq 6(\sqrt{2} - 1)
\sigma/\sqrt{k_*+1}$. Also Lemma \ref{keydrop} states that $k \mapsto
\Delta_k$ is non-decreasing for $k \in \I$. We therefore have 
\begin{equation*}
  g_k \leq \Delta_k \leq \Delta_{k_*} \leq \frac{6(\sqrt{2} - 1)
    \sigma}{\sqrt{k_*+1}} \leq \frac{6(\sqrt{2} - 1)
    \sigma}{\sqrt{k+1}} \qt{for all $k \leq k_*, k \in \I$}. 
\end{equation*}
Therefore any $k \in \I$ for which $g_k > 6(\sqrt{2} - 1)
\sigma/\sqrt{k+1}$ has to be larger than $k_*$. This proves $\tilde{k}
\geq k_*$. The proof is complete.
\end{proof}

We next give the proof of Corollary \ref{theorem::RadiusRBall}. 
\begin{proof}[Proof of Corollary \ref{theorem::RadiusRBall}]
We only need to prove \eqref{f1.eq}. Inequality \eqref{lp1.eq} would
then follow from Theorem \ref{rbe}. Fix $i \in \{1, \dots, n\}$ and
suppose that $K^*$ is contained in a ball of radius $R$ centered at
$(x_1, x_2)$.  We shall prove below that
$\Delta_k(\theta_i) \leq 6 \pi R k/n$ for every $k \in \I$ and
\eqref{f1.eq} would then follow from Corollary \ref{arbe}. Without
loss of generality, assume that $\theta_i = 0$.  

As in the proof of Theorem \ref{nti}, we may assume that $K^*$ is
contained in the ball of radius $R$ centered at the origin. This
implies that $|h_{K^*}(\theta)| \leq R$ for all $\theta$ and also that
$h_{K^*}$ is Lipschitz with constant $R$. Note then that for every $k
\in \I$ and $0 \leq j \leq k$, the quantity  
\begin{equation*}
 Q := \frac{h_{K^*}(4j\pi/n) + h_{K^*}(-4j\pi/n)}{2} - \frac{\cos(4 j \pi/n)}{\cos(2 j
    \pi/n)} \frac{h_{K^*}(2j\pi/n) + h_{K^*}(-2j\pi/n)}{2}
\end{equation*}
can be bounded as
\begin{align*}
  |Q| &= \left\vert \frac{h_{K^*}(4j\pi/n)- h_{K^*}(2j\pi/n)  + h_{K^*}(-4j \pi/n)-
      h_{K^*}(-2j\pi/n)}{2} \right.
  \\
\notag &-
\left. \left(\frac{\cos(4j\pi/n)-\cos(2j\pi/n)}{\cos(2j\pi/n)}\right)\frac{h_{K^*}(2j\pi/n)
    + h_{K^*}(-2j\pi/n)}{2} \right\vert \leq \frac{6Rj \pi}{n}.  
\end{align*}
Here we used also the fact that $\cos(\cdot)$ is Lipschitz and $\cos
(2 j\pi/n) \geq 1/2$. The inequality $\Delta_k(0) \leq 6\pi Rk/n$ then
immediately follows. The proof is complete. 
\end{proof}

We conclude this section with a proof of Corollary
\ref{theorem::individual}. 
\begin{proof}[Proof of Corollary \ref{theorem::individual}]
By Theorem \ref{rbe}, inequality \eqref{indi.eq}  is a direct
consequence of \eqref{vidu.eq}. We therefore only need to prove
\eqref{vidu.eq}. Fix $k \in \I$ with
\begin{equation}\label{kist}
  k \leq \frac{n}{4 \pi} \min(\theta_i - \phi_1(i), \phi_2(i) -
  \theta_i). 
\end{equation}
It is then clear that $\theta_i \pm 4 j \pi/n \in [\phi_1(i),
\phi_2(i)]$ for every $0 \leq j \leq k$. From \eqref{anny}, it follows
that  
\begin{equation*}
  h_{K^*}(\theta) = x_1 \cos \theta + x_2 \sin \theta \qt{for all
    $\theta = \theta_i \pm \frac{4 j \pi}{n}, 0 \leq j \leq k$}. 
\end{equation*}
We now argue that $\Delta_k(\theta_i) = 0$. To see this, note first
that $\Delta_k(\theta_i) = U_k(\theta_i) - L_k(\theta_i)$ has the
following alternative expression \eqref{alex}. Plugging in
$h_{K^*}(\theta) = x_1 \cos 
\theta + x_2 \sin \theta$ in \eqref{alex}, one can see by direct
computation that $\Delta_k(\theta_i) = 0$ for every $k \in \I$
satisfying \eqref{kist}. The definition \eqref{kst} of $k_*(i)$ now
immediately implies that  
\begin{equation*}
  k_*(i) \geq \min \left(\frac{n}{4 \pi} \min(\theta_i - \phi_1(i), \phi_2(i) -
  \theta_i), c n \right)
\end{equation*}
for a small enough universal constant $c$. This proves \eqref{vidu.eq}
thereby completing the proof. 
\end{proof}

\bibliographystyle{chicago}
\bibliography{AG}

\newpage

\appendix
\section{Some additional technical results and proofs}\label{apap}

In this appendix, we provide additional technical results and proofs.

\begin{proof}[Proof of Lemma \ref{genvitthm}]
    The inequality $h_{K^*}(\theta) \leq u(\theta, \phi)$ is obtained by
  using~\eqref{genvit} with $\alpha_1 = \theta+\phi, \alpha_2 = \theta-\phi$
  and $\alpha = \theta$. For $l(\theta,\phi) \leq h_{K^*}(\theta)$, we
  use~\eqref{genvit} with $\alpha_1 =  \theta+ 2\phi, \alpha_2 =
  \theta$ and $\alpha = \theta+\phi$ to obtain 
  \begin{equation*}
    h_{K^*}(\theta) \geq 2 h_{K^*}(\theta+\phi) \cos \phi - h_{K^*}(\theta+2\phi). 
  \end{equation*}
One similarly has $h_{K^*}(\theta) \geq 2 h_{K^*}(\theta-\phi) \cos
\phi - h_{K^*}(\theta-2\phi)$ and $l(\theta,\phi) \leq
h_{K^*}(\theta)$ is deduced by averaging these two inequalities.   
\end{proof}

\begin{lemma}\label{keydrop}
  Recall the quantity $\Delta_k(\theta_i)$ defined in
  \eqref{alex}. The inequality $\Delta_{2k}(\theta_i) \geq 1.5
  \Delta_k(\theta_i)$ holds for every $1 \leq i \leq n$ and $0 \leq k
  \leq n/16$.   
\end{lemma}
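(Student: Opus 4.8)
The plan is to write $\Delta_k(\theta_i)=\frac{1}{k+1}\sum_{j=0}^{k}\delta_j$ with $\delta_j:=u(\theta_i,2\pi j/n)-l(\theta_i,2\pi j/n)$; by Lemma~\ref{genvitthm} each $\delta_j\ge 0$, and $\delta_0=0$ because $u(\theta_i,0)=l(\theta_i,0)=h_{K^*}(\theta_i)$. The crux of the argument will be the monotonicity statement
\begin{equation*}
  \frac{\delta_j}{j}\ \le\ \frac{\delta_{j'}}{j'}\qquad\text{whenever } 1\le j\le j'\le 2k ,
\end{equation*}
and everything after it is bookkeeping. Granting this, set $S_k:=(k+1)\Delta_k(\theta_i)=\sum_{j=0}^{k}\delta_j$. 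For $k+1\le j\le 2k$ write $i=j-k\in\{1,\dots,k\}$; monotonicity gives $\delta_j\ge\frac{k+i}{i}\delta_i=\delta_i+\frac{k}{i}\delta_i$, so summing over $i$ and using $\delta_0=0$,
\begin{equation*}
  S_{2k}=S_k+\sum_{i=1}^{k}\delta_{k+i}\ \ge\ S_k+\sum_{i=1}^{k}\delta_i+k\sum_{i=1}^{k}\frac{\delta_i}{i}\ =\ 2S_k+k\sum_{i=1}^{k}\frac{\delta_i}{i}.
\end{equation*}
Since $k\sum_{i=1}^{k}\frac{\delta_i}{i}-S_k=\sum_{i=1}^{k}\delta_i\,\frac{k-i}{i}\ge 0$, we obtain $S_{2k}\ge 3S_k$, i.e.\ $\Delta_{2k}(\theta_i)=\frac{S_{2k}}{2k+1}\ge\frac{3(k+1)}{2k+1}\Delta_k(\theta_i)\ge\frac32\Delta_k(\theta_i)$ (the case $\Delta_k(\theta_i)=0$ being trivial).

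So the work is to prove that $\delta_j/j$ is nondecreasing, and this is the one genuinely nontrivial step. The key observation is that $\delta_j$ is a \emph{linear} functional of $h_{K^*}$ depending only on the four values $h_{K^*}(\theta_i\pm 2\pi j/n)$ and $h_{K^*}(\theta_i\pm 4\pi j/n)$; hence $j'\delta_j-j\delta_{j'}$ depends on $h_{K^*}$ only through its restriction to a short arc about $\theta_i$ (the points involved lie within $\pm 4\pi j'/n\le\pm\pi/2$ of $\theta_i$), it vanishes on every sinusoid $a\cos\theta+b\sin\theta$ (for those $u=l=h_{K^*}$, so both $\delta_j=0$), and we must show it is $\ge 0$ on the whole cone of circle‑convex functions. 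The plan is to invoke the standard description of that cone: on such an arc the circle‑convex functions are, modulo the two‑dimensional space of sinusoids, precisely the nonnegative combinations of the support functions of line segments, $\theta\mapsto\big(\cos(\theta-\psi_0)\big)_+$ (equivalently, the Green's functions $\big(\sin(\theta-\psi_0)\big)_+$ of $h''+h=\delta_{\psi_0}$ having a single kink in the arc). Since $j'\delta_j-j\delta_{j'}$ is linear and vanishes on sinusoids, it then suffices to check $\delta_j/j\le\delta_{j'}/j'$ when $h_{K^*}$ is one such segment support function.

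For that reduced case I would translate so $\theta_i=0$, put the kink at $\theta=a\ge 0$ (only the symmetric part of $h_{K^*}$ enters $u,l$), and compute directly from \eqref{alex} that, writing $\phi=2\pi j/n$, the summand satisfies $\delta(\phi)=0$ for $\phi\le a/2$, $\delta(\phi)=\tfrac12\sin(2\phi-a)$ for $a/2\le\phi\le a$, and $\delta(\phi)=\tfrac12\cos a\,\tan\phi$ for $\phi\ge a$ (a quick check: this reproduces $\delta_j=R\tan(2\pi j/n)$ in Example~\ref{segm}, where $a=0$ with a doubled kink). Consequently $\delta(\phi)/\tan\phi$ is $0$, then increasing, then the constant $\tfrac12\cos a$: on the middle piece its derivative has the sign of $\sin 2\phi-\tan(2\phi-a)$, which is a \emph{concave} function of $\phi$ on $[a/2,a]$ and is nonnegative at both endpoints ($\sin a$ at $\phi=a/2$, and $\tan a\cos 2a\ge 0$ at $\phi=a$, as $a\le\pi/4$ in the relevant range), hence lies above its chord and so is $\ge 0$ throughout. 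Multiplying by the nondecreasing factor $\tan\phi/\phi$ yields that $\delta(\phi)/\phi$ is nondecreasing, which is exactly the monotonicity of $\delta_j/j$. The main obstacle is therefore the convex‑geometric reduction of the previous paragraph together with this elementary but slightly delicate case computation; once it is in hand, the summation above closes the proof.
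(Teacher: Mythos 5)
Your proposal takes a genuinely different route from the paper, and its verified pieces are correct, but the central reduction is asserted rather than proved.

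Your ``bookkeeping'' is fine and even slightly sharper than the paper's: from $\delta_j/j$ nondecreasing (with $\delta_0=0$) you get $S_{2k}\ge 3S_k$ and hence $\Delta_{2k}\ge \frac{3(k+1)}{2k+1}\Delta_k\ge\frac32\Delta_k$, while the paper's split of $\Delta_{2k}$ into odd and even indices uses only $\delta(2x)\ge 2\delta(x)$ and $\delta$ nondecreasing. Your explicit computation for the kinked test function is also correct: $\delta(\phi)=0$, $\tfrac12\sin(2\phi-a)$, $\tfrac12\cos a\tan\phi$ on the three ranges, and $\sin 2\phi-\tan(2\phi-a)$ is concave on $[a/2,a]$ with nonnegative endpoint values $\sin a$ and $\tan a\cos 2a$. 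I also note that the monotonicity you target is weaker than the paper's: Lemma \ref{keydrop} in the paper establishes $\delta(y)\ge\frac{\tan y}{\tan x}\delta(x)$ for $x<y\le 2x$, and iterating gives $\delta(\phi)/\tan\phi$ nondecreasing, which together with $\tan\phi/\phi$ nondecreasing implies your $\delta(\phi)/\phi$ nondecreasing.

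The genuine gap is the Choquet-type step. You reduce the problem to extremal rays of the cone of circle-convex functions on an arc of length $\pi$, asserting that modulo sinusoids every such function is a nonnegative combination of the single-kink Green's functions $(\sin(\theta-\psi_0))_+$, and that positivity of the linear functional $j'\delta_j-j\delta_{j'}$ on these generators implies positivity on the whole cone. This is plausible and aligns with the surface-area-measure viewpoint of Remark~\ref{remco}, but it is a nontrivial representation theorem for which you give no argument: one must (i) establish the integral representation $h=\text{(sinusoid)}+\int(\sin(\theta-\psi))_+\,d\mu(\psi)$ with $\mu\ge0$ for circle-convex $h$ on a length-$\pi$ arc, (ii) justify passing the linear functional inside the integral, and (iii) handle the boundary kinks at $\pm\pi/2$. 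None of this is routine. The paper avoids all of it: it applies \eqref{genvit} directly twice (once with $\alpha_1=2x,\alpha_2=x,\alpha=y$, once with $\alpha_1=2y,\alpha_2=x,\alpha=2x$), and a short trigonometric identity produces $\delta(y)\ge\frac{\tan y}{\tan x}\delta(x)$ on the nose, with the hypotheses $0<y\le\pi/4$ and $x<y\le 2x$ tracking exactly where the arguments of $\sin$ stay in $(0,\pi)$. Your route, if the representation step were filled in, would constitute a valid alternative proof and has some conceptual appeal, but as written it replaces a two-line application of the defining inequality \eqref{genvit} with an unproved structure theorem, which is the harder end of the trade.
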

\begin{proof}
  We may assume without loss of generality that $\theta_i = 0$. We
  will simply write $\Delta_k$ for $\Delta_k(\theta_i)$ below for
  notational convenience. Let us define, for $\theta \in \R$, 
  \begin{equation*}
    \delta(\theta) := \frac{h_{K^*}(2 \theta) + h_{K^*}(-2\theta)}{2}
    - \frac{\cos 2\theta}{\cos \theta} \frac{h_{K^*}(\theta) +
      h_{K^*}(-\theta)}{2}. 
  \end{equation*}
  Note then that $\Delta_k = \sum_{j=0}^k \delta(2j\pi/n)/(k+1)$. We
  shall first prove that  
  \begin{equation}\label{keydrop.eq}
    \delta(y) \geq \left(\frac{\tan y}{\tan x}\right) \delta(x)
    \qt{for every $0 < y \leq \pi/4$ and $x < y \leq 2x$}. 
  \end{equation}
  For this, first apply \eqref{genvit} to $\alpha_1 = 2x, \alpha_2 =
  x$ and $\alpha = y$ to get
  \begin{equation*}
    h_{K^*}(y) \leq \frac{\sin(y-x)}{\sin x} h_{K^*}(2x) + \frac{\sin(2x-y)}{\sin
      x} h_{K^*}(x) . 
  \end{equation*}
  We then apply~\eqref{genvit} to $\alpha_1 = 2y, \alpha_2 = x$ and
$\alpha = 2x$ to get (note that $2y - x \leq 2y < \pi/2$)
\begin{equation*}
  h_{K^*}(2y) \geq \frac{\sin(2y - x)}{\sin x} h_{K^*}(2x) - \frac{\sin(2y -
    2x)}{\sin x} h_{K^*}(x). 
\end{equation*}
Combining these two inequalities, we get (note that $2y \leq
\pi/2$ which implies that $\cos 2y \geq 0$)
\begin{equation*}
  h_{K^*}(2y) - \frac{\cos 2y}{\cos y} h_{K^*}(y) \geq \alpha h_{K^*}(2x) - \beta h_{K^*}(x),
\end{equation*}
where
\begin{equation*}
  \alpha := \frac{\sin
      (2y-x)}{\sin x} - \frac{\cos 2y}{\cos y} \frac{\sin(y-x)}{\sin
      x}
\end{equation*}
and
\begin{equation*}
\beta := \frac{\sin(2y-2x)}{\sin x} + \frac{\cos
    2y}{\cos y} \frac{\sin(2x-y)}{\sin x}.
\end{equation*}
It can be checked by a straightforward calculation that
\begin{equation*}
  \alpha = \frac{\tan y}{\tan x} ~~~ \text{ and } ~~~ \beta =
  \frac{\tan y}{\tan x} \frac{\cos 2x}{\cos x}. 
\end{equation*}
It follows therefore that
\begin{equation*}
  h_{K^*}(2y) - \frac{\cos 2y}{\cos y}h_{K^*}(y) \geq \frac{\tan y}{\tan x}
  \left(h_{K^*}(2x) - \frac{\cos 2x}{\cos x} h_{K^*}(x)  \right). 
\end{equation*}
We similarly obtain
\begin{equation*}
  h_{K^*}(-2y) - \frac{\cos 2y}{\cos y}h_{K^*}(-y) \geq \frac{\tan y}{\tan x}
  \left(h_{K^*}(-2x) - \frac{\cos 2x}{\cos x} h_{K^*}(-x)  \right). 
\end{equation*}
The required inequality~\eqref{keydrop.eq} now results by adding the
above two inequalities. A trivial consequence of~\eqref{keydrop.eq} is
that $\delta(y) \geq \delta(x)$ for $0 < y \leq \pi/4$ and $x < y \leq
2x$. Further, applying~\eqref{keydrop.eq} to $y = 2x$ (assuming that $0 <
x < \pi/8$), we obtain $\delta(2x) \geq 2 \delta(x)$. Note that $\tan
2x = 2 \tan x/(1 - \tan^2x) \geq 2 \tan x$ for $0 < x < \pi/8$. 

To prove $\Delta_{2k} \geq (1.5) \Delta_k$, we fix $1 \leq k \leq
n/16$ (note that the inequality is trivial when $k = 0$) and note that  
\begin{equation*}
  \Delta_{2k} = \frac{1}{2k+1} \sum_{j = 0}^{2k} \delta
  \left(\frac{2j \pi}{n} \right) = \frac{1}{2k+1} \sum_{j = 1}^k
  \left(\delta \left(\frac{2(2j-1)\pi}{n} \right) + \delta
    \left(\frac{4j\pi}{n} \right) \right)
\end{equation*}
where we used the fact that $\delta(0)=0.$
Using the bounds proved for $\delta(\theta)$, we have
\begin{equation*}
  \delta \left(\frac{2(2j-1)\pi}{n} \right) \geq \delta \left(\frac{2j
      \pi}{n} \right) ~~ \text{ and } ~~   \delta
  \left(\frac{4j\pi}{n} \right) \geq 2 \delta \left(\frac{2j \pi}{n}
  \right). 
\end{equation*}
Therefore
\begin{equation*}
  \Delta_{2k} \geq \frac{3}{2k+1} \sum_{j=1}^k  \delta
  \left(\frac{2j\pi}{n} \right) \geq \frac{3}{2(k+1)} \sum_{j=0}^k  \delta
  \left(\frac{2j\pi}{n} \right) = \frac{3}{2}\Delta_k
\end{equation*}
and this completes the proof.
\end{proof}

\begin{lemma}\label{delkstar}
 Fix $i \in \{1, \dots, n\}$. Consider $\Delta_k(\theta_i)$ (defined
 in \eqref{alex}) and $k_*(i)$ (defined in \eqref{kst}). We then have
 the following inequalities
    \begin{equation}\label{delkstar.eq}
      \Delta_{k_*(i)}(\theta_i) \leq \frac{6(\sqrt{2} -
        1)\sigma}{\sqrt{k_*(i)+1}}.   
    \end{equation}
 and 
 \begin{equation}\label{an.delkstar.eq}
\Delta_k(\theta_i) \geq \max \left( \frac{(\sqrt{6} -
    2)\sigma}{\sqrt{k+1}}, \frac{(\sqrt{6} - 2) \sqrt{k+1}
    \sigma}{2(k_* + 1)}\right) 
\qt{for all $k > k_*(i), k \in \I$}.  
 \end{equation}
\end{lemma}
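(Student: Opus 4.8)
The plan is to work with the abbreviations $\Delta_k := \Delta_k(\theta_i)$, $k_* := k_*(i)$, and $\psi(k) := \Delta_k + \frac{2\sigma}{\sqrt{k+1}}$, so that \eqref{kst} says precisely $\psi(k_*) \le \psi(k)$ for every $k \in \I$. I will use repeatedly that $\Delta_0 = 0$ and $\Delta_k \ge 0$ for all $k$ (both immediate from \eqref{kiest}), and the doubling inequality $\Delta_{2k} \ge 1.5\,\Delta_k$ of Lemma \ref{keydrop}. For \eqref{delkstar.eq} I would split into $k_* = 0$, $k_* = 1$, and $k_* \ge 2$. The case $k_* = 0$ is trivial since $\Delta_0 = 0$. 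For $k_* = 1$, $\psi(1) \le \psi(0)$ together with $\Delta_0 = 0$ gives $\Delta_1 \le 2\sigma(1 - 1/\sqrt2) \le 6(\sqrt2-1)\sigma/\sqrt2$. For $k_* \ge 2$ we have $k_*/2 \in \I$, and combining $\psi(k_*) \le \psi(k_*/2)$ with $\Delta_{k_*/2} \le \frac{2}{3}\Delta_{k_*}$ (Lemma \ref{keydrop}) and $\frac{2\sigma}{\sqrt{k_*/2+1}} = \frac{2\sqrt2\,\sigma}{\sqrt{k_*+2}}$ yields $\frac{1}{3}\Delta_{k_*} \le \frac{2\sqrt2\,\sigma}{\sqrt{k_*+2}} - \frac{2\sigma}{\sqrt{k_*+1}}$; since $(k_*+2)^{-1/2} \le (k_*+1)^{-1/2}$ the right-hand side is at most $\frac{2(\sqrt2-1)\sigma}{\sqrt{k_*+1}}$, which is exactly what is needed.

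For the first lower bound in \eqref{an.delkstar.eq}, fix $k \in \I$ with $k > k_*$. Applying $\psi(k_*) \le \psi(k)$ and dropping the nonnegative term $\Delta_{k_*}$ gives $\Delta_k \ge \frac{2\sigma}{\sqrt{k_*+1}} - \frac{2\sigma}{\sqrt{k+1}}$. The elementary point is that $(k+1)/(k_*+1) \ge 3/2$ whenever $k > k_*$ are both in $\I$: the smallest such $k$ is $1$ when $k_* = 0$ (ratio $2$) and $2k_*$ when $k_* \ge 1$ (ratio $2 - (k_*+1)^{-1} \ge 3/2$), and any larger $k$ only increases the ratio. Hence $\frac{2\sigma}{\sqrt{k_*+1}} \ge \frac{\sqrt6\,\sigma}{\sqrt{k+1}}$, and therefore $\Delta_k \ge \frac{(\sqrt6-2)\sigma}{\sqrt{k+1}}$.

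For the second lower bound in \eqref{an.delkstar.eq} I would induct on $k \in \I$ with $k > k_*$. Let $s$ be the smallest element of $\I$ exceeding $k_*$ (so $s = 1$ if $k_* = 0$ and $s = 2k_*$ otherwise); in both cases $s + 1 \le 2(k_*+1)$. For the base case $k = s$, the bound just proved gives $\Delta_s \ge \frac{(\sqrt6-2)\sigma}{\sqrt{s+1}} = \frac{(\sqrt6-2)\sqrt{s+1}\,\sigma}{s+1} \ge \frac{(\sqrt6-2)\sqrt{s+1}\,\sigma}{2(k_*+1)}$. For $k \in \I$ with $k > s$ we have $k \ge 2s$ and $k/2 \in \I$ with $s \le k/2 < k$ (so $k/2 > k_*$); Lemma \ref{keydrop} and the induction hypothesis then give $\Delta_k \ge 1.5\,\Delta_{k/2} \ge \frac{1.5(\sqrt6-2)\sqrt{k/2+1}\,\sigma}{2(k_*+1)}$, and since $\sqrt{k/2+1} = \sqrt{(k+2)/2} \ge \sqrt{k+1}/\sqrt2$ and $1.5/\sqrt2 > 1$, this is at least $\frac{(\sqrt6-2)\sqrt{k+1}\,\sigma}{2(k_*+1)}$, closing the induction.

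What remains are the constant comparisons ($6(\sqrt2-1)$ in the first part, $\sqrt6/2 = \sqrt{3/2}$ and $1.5/\sqrt2 > 1$ in the second) and checking that Lemma \ref{keydrop} is applicable, i.e.\ $1 \le k/2 \le n/16$, which holds since $\I \subseteq \{0\} \cup [1, \lfloor n/16\rfloor]$. The one place where care is genuinely required — and the reason the small cases $k_* \in \{0,1\}$ in the first part and the explicit definition of $s$ in the second part are needed — is the irregular step structure of $\I = \{0\} \cup \{2^j\}$ near its bottom: the predecessor of $1$ in $\I$ is $0$, not $1/2$, so the "halving" and "successor" manipulations valid for $k_* \ge 2$ must be argued separately at the very bottom of the grid.
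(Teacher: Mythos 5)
Your proof is correct and follows essentially the same route as the paper: the upper bound on $\Delta_{k_*}$ via $\psi(k_*)\le\psi(k_*/2)$ together with $\Delta_{k_*/2}\le\tfrac23\Delta_{k_*}$ (with the small cases $k_*\in\{0,1\}$ handled separately), and the lower bounds via $\psi(k_*)\le\psi(k)$, the ratio bound $(k+1)/(k_*+1)\ge 3/2$, and repeated use of the doubling inequality. The only cosmetic differences are that you prove the first lower bound directly for every $k>k_*$ rather than at the smallest such $k$ and then extending by monotonicity, and you phrase the second lower bound as an induction where the paper telescopes $\Delta_{2m}\ge\sqrt2\,\Delta_m$; both are sound.
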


\begin{proof}
  Fix $i \in \{1,\dots, n\}$. Below we simply denote $k_*(i)$ and
  $\Delta_k(\theta_i)$ by $k_*$ and $\Delta_k$ respectively for
  notational convenience. 

  We first prove \eqref{delkstar.eq}. If $k_*\geq 2$, we have 
  \begin{equation*}
    \Delta_{k_*} + \frac{2\sigma}{\sqrt{k_*+1}} \leq \Delta_{k_*/2}
    + \sqrt{2} \frac{2\sigma}{\sqrt{k_*+2}} \leq \Delta_{k_*/2}
    + \sqrt{2} \frac{2\sigma}{\sqrt{k_*+1}}. 
  \end{equation*}
Using Lemma~\ref{keydrop} (note that $k_* \in \I$ and hence $k_* \leq
n/16$), we have $\Delta_{k_*/2} \leq (2/3)\Delta_{k_*}$. We therefore
have
  \begin{equation*}
    \Delta_{k_*} + \frac{2\sigma}{\sqrt{k_*+1}} \leq \frac{2}{3}\Delta_{k_*}
    + \sqrt{2} \frac{2\sigma}{\sqrt{k_*+1}}
  \end{equation*}
which proves~\eqref{delkstar.eq}. Inequality \eqref{delkstar.eq} is
trivial when $k_* = 0$. Finally, for $k_* = 1$, we have $\Delta_1 +
\sqrt{2} \sigma \leq \Delta_0 + 2 \sigma = 2 \sigma$ which again
implies \eqref{delkstar.eq}.

  We now turn to \eqref{an.delkstar.eq}. Let $k'$ denote the smallest
  $k \in \I$ for which $k > k_*$. We start by proving the first part
  of \eqref{an.delkstar.eq}:
  \begin{equation}\label{ffp}
    \Delta_k \geq \frac{(\sqrt{6} - 2)\sigma}{\sqrt{k+1}} \qt{for $k >
      k_*, k \in \I$}. 
  \end{equation}
  Note first that if \eqref{ffp} holds for $k = k'$, then it holds for
  all $k \geq k'$ as well because $\Delta_k \geq \Delta_{k'}$ (from
  Lemma \ref{keydrop}) and $1/\sqrt{k+1} \leq 1/\sqrt{k'+1}$. We
  therefore only need to verify \eqref{ffp} for $k = k'$. If $k_* =
  0$, then $k' = 1$ and because
  \begin{equation*}
    \Delta_1 + \frac{2\sigma}{\sqrt{2}} \geq \Delta_0 + 2 \sigma = 2 \sigma,
  \end{equation*}
  we obtain $\Delta_1 \geq (2 - \sqrt{2}) \sigma$. This implies
  \eqref{ffp}. On the other hand, if $k_* > 0$, then $k' =
  2k_*$ and we can write
  \begin{equation*}
    \Delta_{2k_*} + \frac{2 \sigma}{\sqrt{2k_* + 1}} \geq \Delta_{k_*}
    + \frac{2 \sigma}{\sqrt{k_* + 1}} \geq \frac{2 \sigma}{\sqrt{k_* +
        1}}. 
  \end{equation*}
  This gives 
  \begin{equation*}
    \Delta_{2k_*} \geq \frac{2 \sigma}{\sqrt{2k_* + 1}}
    \left(\sqrt{\frac{2k_* + 1}{k_* + 1}} - 1 \right)
  \end{equation*}
  which implies inequality \eqref{ffp} for $k = 2k_*$
  because $(2k_* + 1)/(k_* + 1) \geq 3/2$. The proof of \eqref{ffp} is
  complete.  

  For the second part of \eqref{an.delkstar.eq}, we use Lemma
  \ref{keydrop} which states $\Delta_{2k} \geq (1.5) \Delta_k \geq
  \sqrt{2} \Delta_k$ for all $k \in \I$. By a repeated application of
  this inequality, we get 
  \begin{equation*}
    \Delta_k \geq \sqrt{\frac{k}{k'}} \Delta_{k'} \geq
    \sqrt{\frac{k+1}{k'+1}} \Delta_{k'} \qt{for all $k \geq k'$}. 
  \end{equation*}
  Using \eqref{ffp} for $k = k'$, we get 
  \begin{equation*}
    \Delta_k \geq \frac{(\sqrt{6} - 2) \sigma \sqrt{k+1}}{k' + 1}. 
  \end{equation*}
  The proof of \eqref{an.delkstar.eq} is now completed by observing
  that $k' \leq 2k_* + 1$.  
\end{proof}

\begin{lemma}\label{varcal}
Fix $i \in \{1, \dots, n\}$. For every $0 \leq k \leq n/8$, the
variance of the random variable $\hat{U}_k(\theta_i)$ (defined in
\eqref{avig}) is at most $\sigma^2/(k+1)$. Also, for every $0 \leq k
\leq n/16$, the variance of the random variable
$\hat{\Delta}_k(\theta_i)$ (defined in \eqref{deluserep}) is at most
$\sigma^2/(k+1)$.   
\end{lemma}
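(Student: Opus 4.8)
The plan is to use the independence of the noise variables. Since $Y_i = h_{K^*}(\theta_i) + \xi_i$ with the $\xi_i$ i.i.d.\ $N(0,\sigma^2)$ and $h_{K^*}(\theta_i)$ deterministic, and since the periodic extension $Y_i := Y_{i-sn}$ only ever involves $\xi$-indices lying in windows of length less than $n$ here, both $\hat{U}_k(\theta_i)$ and $\hat{\Delta}_k(\theta_i)$ are affine functionals of an independent Gaussian vector. Hence their variances equal $\sigma^2$ times the squared Euclidean norm of the corresponding coefficient vector. So the whole proof reduces to two bookkeeping steps: (i) checking that the $Y$-indices that actually appear are pairwise distinct modulo $n$, so the coefficient vector is exactly what is written in \eqref{avig}, resp.\ \eqref{deluserep}; and (ii) bounding the sum of squared coefficients by $1/(k+1)$.

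For $\hat{U}_k(\theta_i)$ with $0 \le k \le n/8$, I would first note that the $j=0$ term of \eqref{avig} equals $Y_i$, so
\begin{equation*}
  \hat{U}_k(\theta_i) = \frac{1}{k+1}\left(Y_i + \sum_{j=1}^k \frac{Y_{i+j} + Y_{i-j}}{2\cos(2j\pi/n)}\right),
\end{equation*}
which involves only the $2k+1$ consecutive indices $i-k,\dots,i+k$; these are distinct mod $n$ because $2k+1 \le n$. Since $2j\pi/n \le \pi/4$ for $j \le k \le n/8$, one has $\cos^2(2j\pi/n) \ge 1/2$, and therefore
\begin{equation*}
  \text{var}\big(\hat{U}_k(\theta_i)\big) = \sigma^2\left(\frac{1}{(k+1)^2} + \sum_{j=1}^k \frac{1}{2(k+1)^2\cos^2(2j\pi/n)}\right) \le \frac{\sigma^2}{(k+1)^2}(1+k) = \frac{\sigma^2}{k+1}.
\end{equation*}

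For $\hat{\Delta}_k(\theta_i)$ with $0 \le k \le n/16$, the $j=0$ term of \eqref{deluserep} vanishes, so $\hat{\Delta}_k(\theta_i)$ is a linear combination of the variables $Y_{i+\ell}$ with $\ell$ ranging over a subset of $\{-2k,\dots,2k\}$; these $4k+1$ indices are distinct mod $n$ since $4k+1 \le n$. The one point requiring care is that the index families $\{i\pm 2j\}_{j\le k}$ and $\{i\pm j\}_{j\le k}$ overlap: an offset $\ell$ with $|\ell|$ even and $|\ell|\le k$ picks up a contribution from both sums. I would therefore compute the total coefficient $c_\ell$ on $\xi_{i+\ell}$ case by case in terms of $a_m := \cos(4m\pi/n)/\cos(2m\pi/n)$, getting $c_\ell \in \{-a_{|\ell|}/(2(k+1)),\ (1-a_{|\ell|})/(2(k+1)),\ 1/(2(k+1)),\ 0\}$ according to the parity and size of $\ell$. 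The crucial elementary observation — and the only place the hypothesis $k\le n/16$ is used — is that $4m\pi/n \le \pi/4 < \pi/2$ for $m\le k$ makes $0 < \cos(4m\pi/n) \le \cos(2m\pi/n)$, hence $a_m \in (0,1]$; this forces $|c_\ell| \le 1/(2(k+1))$ in every case. Since there are at most $4k+1$ such offsets,
\begin{equation*}
  \text{var}\big(\hat{\Delta}_k(\theta_i)\big) = \sigma^2\sum_{\ell} c_\ell^2 \le \sigma^2\,\frac{4k+1}{4(k+1)^2} \le \frac{\sigma^2}{k+1}.
\end{equation*}

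I expect the main (and essentially the only) obstacle to be this coefficient bookkeeping for $\hat{\Delta}_k$: keeping the overlap between the "$2j$" and "$j$" index families straight, and verifying that the combined coefficient $(1-a_{|\ell|})/(2(k+1))$ at even offsets $\le k$ is still bounded by $1/(2(k+1))$ — which is exactly why it matters that $a_m$ lies in $[0,1]$, not merely that it is positive. Everything else is a one-line estimate.
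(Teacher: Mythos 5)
Your proof is correct and follows essentially the same approach as the paper: reduce to independence of the $\xi$'s by noting the indices appearing are distinct mod $n$, then observe that the overlap between the "$2j$" and "$j$" index families only occurs at even offsets of magnitude at most $k$, where the coefficient is $(1-a_{|\ell|})/(2(k+1))$, and use $a_m=\cos(4m\pi/n)/\cos(2m\pi/n)\in[0,1]$ (valid for $k\le n/16$) to bound every coefficient by $1/(2(k+1))$. The paper organizes the same accounting as a decomposition $\hat\Delta_k=S_1+S_2+S_3$ into odd offsets $\le k$, even offsets $\le k$, and even offsets in $(k,2k]$, then sums the three variances; your per-offset coefficient enumeration is an equivalent, marginally cruder reorganization of that bookkeeping, and the $\hat U_k$ calculation is literally the same.
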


\begin{proof}
Fix $1 \leq i \leq n$. We shall first prove the bound for the variance
of $\hat{U}_k(\theta_i)$ for a fixed $0 \leq k \leq n/8$. Note that 
\begin{equation*}
  \hat{U}_k(\theta_i) = \frac{1}{k+1} \sum_{j=0}^k \frac{Y_{i+j} +
    Y_{i-j}}{2 \cos (2 j \pi/n)}. 
\end{equation*}
It is therefore straightforward to see that 
\begin{equation*}
  \text{var}(\hat{U}_k(\theta_i)) = \frac{\sigma^2}{(k+1)^2} \left( 1
    + \frac{1}{2} \sum_{j=1}^k \sec^2(2 j \pi/n) \right). 
\end{equation*}
For $1 \leq j \leq k \leq n/8$, we have $\sec (2 j \pi/n) \leq
\sqrt{2}$ because $2 j \pi/n \leq \pi/4$. The inequality
$\text{var}(\hat{U}_k(\theta_i)) \leq \sigma^2/(k+1)$ then immediately
follows. 

Let us now turn to the variance of $\hat{\Delta}_k(\theta_i)$. When
$k=0$, the conclusion is obvious since $\hat{\Delta}_k(\theta_i) 
= 0$. Otherwise, the expression~\eqref{deluserep} for  
$\hat{\Delta}_k(\theta_i)$ can be rewritten as
\begin{equation*}
  \hat{\Delta}_k(\theta_i) = S_1 + S_2 + S_3 
\end{equation*}
where
\begin{equation*}
S_1 = \frac{-1}{k+1} \sum_{j=1}^k   \left\{ j \text{ is odd} \right\} 
  \frac{\cos (4j\pi/n)}{\cos (2j\pi/n)} \frac{Y_{i+j} + Y_{i-j}}{2},
\end{equation*}
\begin{equation*}
S_2 =  \frac{1}{k+1} \sum_{j=1}^k \left\{j \text{ is even} \right\}
\left(1 - \frac{\cos (4j\pi/n)}{\cos (2j\pi/n)} \right)
\frac{Y_{i+j} + Y_{i-j}}{2},
\end{equation*}
and
\begin{equation*}
  S_3 = \frac{1}{k+1} \sum_{j=k+1}^{2k}   \left\{j \text{ is even}
  \right\}  \frac{Y_j + Y_{-j}}{2}.
\end{equation*}
$S_1, S_2$ and $S_3$ are clearly independent. Moreover, the different
terms in each $S_i$ are also independent. Thus
\begin{equation*}
\text{var}(S_1) = \frac{\sigma^2}{2(k+1)^2} \sum_{j=1}^k   \left\{ j \text{
    is odd} \right\}  \frac{\cos^2 (4j\pi/n)}{\cos^2 (2j\pi/n)},
\end{equation*}
\begin{equation*}
  \text{var}(S_2) = \frac{\sigma^2}{2(k+1)^2} \sum_{j=1}^k   \left\{ j
    \text{ is even} \right\} \left(1 - \frac{\cos (4j\pi/n)}{\cos
      (2j\pi/n)} \right)^2,
\end{equation*}
and
\begin{equation*}
  \text{var}(S_3) = \frac{\sigma^2}{2(k+1)^2} \sum_{j=k+1}^{2k} \left\{ j 
    \text{ is even} \right\} \leq \frac{\sigma^2}{2(k+1)}. 
\end{equation*}
Now for $k \leq n/16$ and $1 \leq j \leq k$, 
\begin{equation*}
0 \leq  \frac{\cos (4j\pi/n)}{\cos (2j\pi/n)} \leq 1
\end{equation*}
which implies that $\text{var}(S_1) + \text{var}(S_2) \leq
\sigma^2/2(k+1)$. Thus $\text{var}(\hat{\Delta}_k(\theta_i)) \leq
\sigma^2/(k+1)$.  
\end{proof}

The following lemma was used in the proof of Theorem~\ref{lobo}. 
\begin{lemma}\label{useaux}
Let $\Delta_k$ be the quantity \eqref{alex} with $\theta_i = 0$ i.e., 
\begin{equation*}
\Delta_k :=  \frac{1}{k+1} \sum_{j=0}^k
  \left(\frac{h_{K^*}(4j\pi/n) + h_{K^*}(-4j
      \pi/n)}{2} - \frac{\cos(4j\pi/n)}{\cos(2j\pi/n)}
    \frac{h_{K^*}(2j\pi/n) + h_{K^*}(-2j\pi/n)}{2}
  \right).  
\end{equation*}
Then the following inequality holds for every $k \leq n/16$: 
\begin{equation*}
  \Delta_k \leq \frac{h_{K^*}(4k\pi/n) + h_{K^*}(-4k\pi/n)}{2\cos (4k\pi/n)} -
  h_{K^*}(0).
\end{equation*}
\end{lemma}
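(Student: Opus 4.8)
The plan is to bound the Cesàro average $\Delta_k$ by its last term $\delta(2\pi k/n)$, and then to recognize that last term — after a one-line algebraic identity — as the right-hand side of the claimed inequality. Throughout I would write $\phi_j:=2\pi j/n$ and recall from the proof of Lemma~\ref{keydrop} that $\Delta_k=\frac{1}{k+1}\sum_{j=0}^k\delta(\phi_j)$, where
\[
  \delta(\phi):=\frac{h_{K^*}(2\phi)+h_{K^*}(-2\phi)}{2}-\frac{\cos 2\phi}{\cos\phi}\cdot\frac{h_{K^*}(\phi)+h_{K^*}(-\phi)}{2}.
\]
The case $k=0$ is trivial, since then both sides equal $0$, so assume $k\ge 1$; since $k\le n/16$ we have $0<\phi_k\le\pi/8$ and $0<2\phi_k\le\pi/4$, so all cosines appearing below are positive. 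Note also that $\delta(\phi)=u(0,\phi)-l(0,\phi)$ by a direct computation (this is exactly the $j$-th summand of \eqref{alex}), and hence $\delta(\phi)\ge 0$ by Lemma~\ref{genvitthm}, where $u$ and $l$ are as defined just before Lemma~\ref{genvitthm}.

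The first step is to show $\delta(\phi_j)\le\delta(\phi_k)$ for every $0\le j\le k$, whence $\Delta_k\le\delta(\phi_k)$ by averaging. For $j=0$ this is just $\delta(0)=0\le\delta(\phi_k)$. For $1\le j\le k-1$ we have $\phi_j<\phi_{j+1}\le 2\phi_j$ and $0<\phi_{j+1}\le\pi/4$, so inequality~\eqref{keydrop.eq} established in the proof of Lemma~\ref{keydrop} gives $\delta(\phi_{j+1})\ge(\tan\phi_{j+1}/\tan\phi_j)\,\delta(\phi_j)\ge\delta(\phi_j)$; chaining these yields $\delta(\phi_1)\le\cdots\le\delta(\phi_k)$.

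The second step is the identity
\[
  \delta(\phi_k)=\cos(2\phi_k)\bigl(u(0,2\phi_k)-u(0,\phi_k)\bigr),
\]
obtained by substituting $h_{K^*}(\phi_k)+h_{K^*}(-\phi_k)=2\cos\phi_k\,u(0,\phi_k)$ and $h_{K^*}(2\phi_k)+h_{K^*}(-2\phi_k)=2\cos(2\phi_k)\,u(0,2\phi_k)$ into the definition of $\delta$ and simplifying. Since $\delta(\phi_k)\ge 0$ and $0<\cos(2\phi_k)\le 1$, it follows that $u(0,2\phi_k)-u(0,\phi_k)=\delta(\phi_k)/\cos(2\phi_k)\ge\delta(\phi_k)\ge 0$; combining this with $u(0,\phi_k)\ge h_{K^*}(0)$ (Lemma~\ref{genvitthm} at $\theta=0$) gives
\[
  \Delta_k\le\delta(\phi_k)\le u(0,2\phi_k)-u(0,\phi_k)\le u(0,2\phi_k)-h_{K^*}(0)=\frac{h_{K^*}(4k\pi/n)+h_{K^*}(-4k\pi/n)}{2\cos(4k\pi/n)}-h_{K^*}(0),
\]
which is the assertion, since $2\phi_k=4k\pi/n$.

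There is no serious obstacle here; the only points needing care are keeping $\phi_k$ and $2\phi_k$ inside $(0,\pi/2)$ — which is precisely what the hypothesis $k\le n/16$ guarantees, so that $\cos(2\phi_k)>0$ and Lemma~\ref{genvitthm} applies — and checking that the monotonicity input~\eqref{keydrop.eq} is available in the required range of arguments. If one preferred not to quote~\eqref{keydrop.eq}, an alternative is to prove directly from circle-convexity~\eqref{genvit} applied to the even support function $\tfrac12(h_{K^*}(\cdot)+h_{K^*}(-\cdot))$ that $\phi\mapsto u(0,\phi)$ is nondecreasing on $(0,\pi/2)$, and then bound $\delta(\phi_j)=\cos(2\phi_j)\bigl(u(0,2\phi_j)-u(0,\phi_j)\bigr)\le u(0,2\phi_k)-h_{K^*}(0)$ termwise before averaging; either route works.
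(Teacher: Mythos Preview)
Your proof is correct and follows essentially the same route as the paper's: both first bound $\Delta_k$ by the last summand $\delta(\phi_k)$ using the monotonicity of $\delta$ established via~\eqref{keydrop.eq} in the proof of Lemma~\ref{keydrop}, and then rewrite $\delta(\phi_k)=\cos(2\phi_k)\bigl(u(0,2\phi_k)-u(0,\phi_k)\bigr)$ and use $u(0,\phi_k)\ge h_{K^*}(0)$ together with $0<\cos(2\phi_k)\le 1$. The only cosmetic difference is the order of the two final estimates (the paper drops the nonnegative term $u(0,\phi_k)-h_{K^*}(0)$ before bounding $\cos(2\phi_k)\le 1$, you do the reverse), and your chaining $\phi_1\to\phi_2\to\cdots\to\phi_k$ is a cleaner justification of the monotonicity than the paper's doubling sketch.
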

\begin{proof}
  From Lemma~\ref{keydrop}, it follows that $\delta(2i\pi/n) \leq
  \delta(2k\pi/n)$ for all $1 \leq i \leq k$ (this follows by
  reapplying Lemma~\ref{keydrop} to $2i\pi/n, 4i\pi/n, \dots$ until we
  hit $2k\pi/n$).  As a consequence, we have
  $\Delta_k \leq \delta(2k\pi/n)$. Now, if $\theta = 2k\pi/n$ then
  $\theta \leq \pi/8$ and we can write
  \begin{align*}
    \delta(\theta) &= \frac{h_{K^*}(2\theta) + h_{K^*}(-2\theta)}{2} - \frac{\cos
      2\theta}{\cos \theta} \frac{h_{K^*}(\theta) + h_{K^*}(-\theta)}{2} \\
&= \cos 2\theta \left(\frac{h_{K^*}(2\theta) + h_{K^*}(-2\theta)}{2\cos 2\theta} -
h_{K^*}(0)\right) - \cos 2\theta \left(\frac{h_{K^*}(\theta) + h_{K^*}(-\theta)}{2\cos
  \theta} - h_{K^*}(0) \right). 
  \end{align*}
Because $h_{K^*}(\theta) + h_{K^*}(-\theta) \geq 2 h_{K^*}(0) \cos \theta$ and $\cos
2\theta \geq 0$, we have 
\begin{equation*}
  \delta(\theta) \leq \cos 2\theta \left(\frac{h_{K^*}(2\theta) +
      h_{K^*}(-2\theta)}{2\cos 2\theta} - h_{K^*}(0) \right) \leq \frac{h_{K^*}(2\theta) +
      h_{K^*}(-2\theta)}{2\cos 2\theta} - h_{K^*}(0). 
\end{equation*}
The proof is complete. 
\end{proof}

\end{document}